\documentclass[11pt, a4paper]{amsart}
\usepackage[latin1]{inputenc}
\usepackage{amsmath, amsfonts, amssymb, amsthm, mathtools, mathrsfs,bm}
\usepackage{enumerate}
\usepackage[numbers]{natbib}
\usepackage{setspace}
\usepackage[usenames,dvipsnames]{xcolor}
\usepackage{xcolor}
\usepackage{tikz-cd}



\setlength{\textwidth}{160mm}
\setlength{\textheight}{216mm}
\setlength{\parindent}{8mm}
\setlength{\oddsidemargin}{0pt}
\setlength{\evensidemargin}{0pt}

\usepackage[pagebackref,colorlinks,linkcolor=blue,citecolor=blue,urlcolor=blue,hypertexnames=true]{hyperref}

\newtheorem{theorem}{Theorem}[section]
\newtheorem*{thmA}{Theorem A}

\newtheorem{lemma}[theorem]{Lemma}
\newtheorem{proposition}[theorem]{Proposition}
\newtheorem{corollary}[theorem]{Corollary}

\theoremstyle{definition}

\newtheorem{remark}[theorem]{Remark}



\newcommand{\ZZ}{\mathcal{Z}}
\newcommand{\OO}{\mathcal{O}}
\newcommand{\QQ}{\mathcal{Q}}
\newcommand{\N}{\mathbb{N}}
\newcommand{\Z}{\mathbb{Z}}
\newcommand{\Q}{\mathbb{Q}}
\newcommand{\R}{\mathbb{R}}
\newcommand{\C}{\mathbb{C}}

\newcommand{\mI}{\mathcal{I}}
\newcommand{\Sgr}{\mathcal{S}}

\newcommand{\K}{\mathcal{K}}	

\newcommand{\Aut}{\mathrm{Aut}}
\newcommand{\Autgr}{\mathrm{Aut}_{\text{gr}}}

\newcommand{\Cliff}[1]{\C \ell_{#1}}
\newcommand{\id}[1]{\mathrm{id}_{#1}}
\newcommand{\Ggr}{\operatorname{G}^{\text{gr}}}
\newcommand{\G}{\operatorname{G}}

\newcommand{\hE}{\hat{E}}
\newcommand{\bE}{\bar{E}}

\newcommand{\catU}{\mathcal{U}}

\newcommand{\bL}{\mathbb{L}}
\newcommand{\OSpec}{\mathscr{S}}
\newcommand{\wSpec}{\mathscr{S}^{\Omega}}
\newcommand{\SMod}{\mathscr{M}_S}
\newcommand{\ComS}{\mathrm{Com}_S}

\DeclareMathOperator{\hocolim}{hocolim}

\definecolor{darkpastelred}{rgb}{0.76, 0.23, 0.13}
\definecolor{darkred}{rgb}{0.55, 0.0, 0.0}
\definecolor{darkmagenta}{rgb}{0.55, 0.0, 0.55}
\definecolor{coolblack}{rgb}{0.0, 0.18, 0.39}
\definecolor{ceruleanblue}{rgb}{0.16, 0.32, 0.75}


\begin{document}
\title[bundles of strongly self-absorbing $C^*$-algebras]{{Computing  cohomology groups that classify bundles of strongly self-absorbing $C^*$-algebras}}
\author{Marius Dadarlat}
\author{James E. McClure}
\address{Department of Mathematics \\
Purdue University\\
West Lafayette, IN 47907, USA}
\author{Ulrich Pennig}
\address{School of Mathematics \\
Cardiff University \\
Cardiff \\
CF24 4AG\\
Wales\\
UK}

\thanks{J.E.McC. thanks the Lord for making his work possible}

\maketitle
\begin{abstract}
Locally trivial bundles of $C^*$-algebras with fibre $D \otimes \mathcal{K}$ for a strongly self-absorbing $C^*$-algebra $D$ over a finite CW-complex $X$ form a group $E^1_D(X)$ that is the first group of a cohomology theory $E^*_D(X)$. In this paper we compute these groups by expressing them in terms of ordinary cohomology and connective $K$-theory. To compare the $C^*$-algebraic version of $gl_1(KU)$ with its classical counterpart we also develop a uniqueness result for the unit spectrum of complex periodic topological $K$-theory.
\end{abstract}

\tableofcontents

\section{Introduction}
Continuous fields of $C^*$-algebras occur naturally as they correspond to bundles of $C^*$-algebras in the sense of topology \cite{BK:bundles}. Any $C^*$-algebra with Hausdorff primitive spectrum $X$ is a continuous field of simple $C^*$-algebras over $X$, \cite{Fell}, \cite{Dix:C*}. More importantly, continuous fields are used as versatile tools in several areas: $E$-theory \cite{Con-Hig:etheory}, deformations of the tangent groupoid of manifolds \cite{Con:noncomm}, \cite{Nest:tangent-groupoid}, strict deformation quantization \cite{Rieffel:quantization}, \cite{Landsman:book}, the Novikov conjecture and the Baum-Connes conjecture, \cite{Kas:inv}, \cite{HigKas:BC}, \cite{Tu:BC}, representation theory and index theory \cite{Higson:Mackey-analogy}, \cite{Gue-Hig:book}.

Particularly well-behaved examples of continuous fields {over a compact space $X$} can be obtained as the continuous sections  {$C(X,E)$} of a locally trivial bundle $E \to X$ with fibres isomorphic to a fixed $C^*$-algebra $A$. Such bundles are constructed by gluing together trivial bundles $U_i \times A$ over an open cover $(U_i)_{i \in I}$ of the base space $X$ using $1$-cocycles $\gamma_{ij} \colon U_i \cap U_j \to \Aut(A)$. Two such locally trivial bundles $E$ and $F$ over a compact Hausdorff space $X$ are isomorphic if and only if their section $C^*$-algebras {$C(X,E)$ and $C(X,F)$} are isomorphic via a $C(X)$-linear $*$-isomorphism. The proof of this property is an elementary exercise in bundle theory \cite{DaHiPhi}.

As the description by $1$-cocycles indicates, the classification of isomorphism classes of locally trivial bundles with fibre $A$ can be reduced to the classification of principal $\Aut(A)$-bundles over~$X$ up to isomorphism, and hence to the computation of the homotopy classes of maps from $X$ to the classifying space $B\Aut(A)$. Without further structure the homotopy type of the classifying space is typically very difficult to determine and the computation of homotopy sets such as $[X,B\Aut(A)]$ is out of reach. This changes quite drastically, however, if $A = D \otimes \K$, where $D$ belongs to the class of strongly self-absorbing $C^*$-algebras 
 (see Sec.~\ref{sect:two}) and $\K$ denotes the compact operators on an infinite-dimensional separable Hilbert space. In this case $\Aut(D \otimes \K)$ turns out to be an infinite loop space, which not only implies that $E^1_D(X) := [X,B\Aut(D \otimes \K)]$ is an abelian group,  the 1-group of the generalized cohomology theory $E^*_D(X)$, but also that it is amenable to methods from stable homotopy theory, \cite{DP1}, \cite{DP2}.

Remarkably, the group law on $E^1_D(X)$ coming from the infinite loop space structure of  $B\Aut(D \otimes \K)$ coincides with the operation induced by the tensor product of $D \otimes \K$-bundles. This led to a generalized Dixmier-Douady theory. Let us note that if $D=\mathbb{C}$, then  $E_{\mathbb{C}}^1(X)\cong H^3(X,\mathbb{Z})$ is  the home of the classic  Dixmier-Douady class. Strongly self-absorbing $C^*$-algebras \cite{paper:TomsWinter} are separable unital $C^*$-algebras $D$ defined by a crucial property that they share with the complex numbers $\C$.  Namely, there exists an isomorphism $D\to D\otimes D$ which is unitarily homotopic to the map $d\mapsto d\otimes 1_D$ \cite{Dadarlat-Winter:KK-of-ssa}, \cite{paper:WinterZStable}. Any strongly self-absorbing $C^*$-algebra $D$ is either stably finite or purely infinite. The latter condition is equivalent to $D\cong D\otimes \OO_\infty$, where  $\OO_\infty$ is the infinite Cuntz algebra. Due to recent progress in classification theory \cite{Winter:abel} we now have a complete list of all the strongly self-absorbing $C^*$-algebras that satisfy the Universal Coefficient Theorem (abbreviated~UCT) in KK-theory.

The main goal of this paper is to compute the group $E_D^1(X)$ for all strongly self-absorbing $C^*$-algebras $D$ in the UCT class. More precisely, we express $E_D^1(X)$ and its variants using connective $K$-theory and ordinary cohomology groups. While this question is interesting in itself and in view of direct connections with higher twisted K-theory \cite{Ulrich}, we draw additional motivation from the following recent development: a conjecture of Izumi from \cite{Izumi-conj}, \cite{Izumi-Matui} has been recently proved due to combined work of Meyer \cite{Meyer-KKG} and Gabe and Szab\'{o} \cite{Gabe-Szabo}. It asserts that for a countable torsion free amenable group $G$ and for a  Kirchberg algebra $D$, there is a bijection between the cocycle conjugacy classes  of outer actions of $G$ on $D\otimes K$ and the isomorphism classes of principal $\Aut(D \otimes K)$-bundles over the classifying space $BG$, i.e.\ the set of homotopy classes $[BG,B\mathrm{Aut}(D\otimes K)]$.

 It follows that if $BG$ admits a model as a finite CW-complex and if $D$ is a strongly self-absorbing Kirchberg algebra that satisfies the UCT, then the set of cocycle conjugacy classes of outer actions of $G$ on $D\otimes K$ forms a group with respect to the tensor product operation, and this group is isomorphic to $E^1_D(BG)$. Moreover, this group can be computed as explained below.

Let $k^*(X)$ denote the complex connective $K$-theory of the space $X$. For a finite based CW-complex $X$ with skeleta $X_i$, $\widetilde{k}^i(X)\cong \widetilde{K}^i(X,X_{i-2})$ and in particular $k^5(X)\cong K^1(X,X_3)$, see Proposition~\ref{k-th}.
For a set of primes $P\neq \emptyset$, {consider the $C^*$-algebra $M_P = \bigotimes_{p \in P} M_p(\C)^{\otimes \infty}$ and its $K$-theory ring
$\Z_P=\bigotimes_{p\in P}\Z[\frac{1}{p}]$ viewed as a subring of  $\Q$.}
Our main result is the following:
\begin{thmA} \label{thm:main-intro} Let $X$ be a finite  CW-complex and let $P$ be a nonempty set of prime numbers. There are  isomorphisms
\begin{itemize}
	\item[(a)] ${E}^1_{\ZZ}(X)\cong  H^3(X,\Z)\oplus k^5(X)$.
	\item[(b)] ${E}^1_{M_P}(X)\cong  H^1(X,(\Z_P)_+^\times){\oplus} H^3(X,\Z_P)\oplus k^5(X,\Z_P)$
	\item[(c)] ${E}^1_{\OO_\infty}(X)\cong \big(H^1(X, \Z/2)\times_{_{tw}}  H^3(X,\Z)\big)\oplus k^5(X)$.
    \item[(d)] ${E}^1_{M_P \otimes \OO_\infty}(X)\cong H^1(X,(\Z_P)_+^\times)\oplus \big(H^1(X, \Z/2)\times_{_{tw}} H^3(X,\Z_P)\big)\oplus k^5(X,\Z_P)$
\end{itemize}
The (twisted) multiplication on $ H^1(X;\Z/2) \times H^3(X,\Z_P)$ is given by
\begin{equation}\label{xcxc}
	(w, \tau) \cdot (w',\tau') = (w + w', \tau + \tau' + \beta_P(w \cup w'))
\end{equation}
for $w,w' \in H^1(X,\Z/2)$ and $\tau,\tau' \in H^3(X,\Z_P)$, where $\beta_P \colon H^2(X,\Z/2) \to H^3(X,\Z_P)$ is the composition of the Bockstein homomorphism $\beta$ with the coefficient map $H^3(X,\Z)\to H^3(X,\Z_P)$. The multiplicaton in (c) is just like in ~\eqref{xcxc} with   $\beta_P$
 replaced by $\beta \colon H^2(X,\Z/2) \to H^3(X,\Z)$.
\end{thmA}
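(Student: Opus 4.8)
The plan is to reduce the computation to stable homotopy theory and then to read off $E^1_D(X) = [X, B\Aut(D\otimes\K)]$ from the Postnikov tower of $B\Aut(D\otimes\K)$. First I would record the homotopy groups provided by the infinite loop space structure of \cite{DP1,DP2}. Using $\pi_n B\Aut(D\otimes\K) = \pi_{n-1}\Aut(D\otimes\K)$ together with $\pi_i\Aut(D\otimes\K)\cong K_i(D)$ for $i\geq 1$ (two-periodic, hence $K_0(D)$ in even degrees and $K_1(D)=0$ in odd degrees), one obtains $\pi_{2k+1}B\Aut(D\otimes\K)\cong K_0(D)$ and $\pi_{2k}B\Aut(D\otimes\K)=0$ for $k\geq 1$, where $K_0(D)=\Z$ for $D=\ZZ,\OO_\infty$ and $K_0(D)=\Z_P$ for $D=M_P,M_P\otimes\OO_\infty$. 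The fundamental group $\pi_1 B\Aut(D\otimes\K)=\pi_0\Aut(D\otimes\K)$ is the group of \emph{positive} units $(K_0(D))^\times_+$ for the stably finite algebras $\ZZ, M_P$ (so $(K_0(\ZZ))^\times_+=\{1\}$ is trivial), and the \emph{full} unit group $(K_0(D))^\times=\{\pm1\}\times(K_0(D))^\times_+$ for the purely infinite algebras $\OO_\infty, M_P\otimes\OO_\infty$.

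The core step is to identify $B\Aut(D\otimes\K)$, above its fundamental group, with the infinite loop space of a \emph{topological} spectrum built from connective $K$-theory. Concretely I would show that the higher part of $B\Aut(D\otimes\K)$ is equivalent, as an infinite loop space, to (the relevant truncation of) $\Omega^\infty\Sigma^3 ku^D$, where $ku^D$ is connective complex $K$-theory with $\pi_0=K_0(D)$ (so $ku^D=ku$ for $\ZZ,\OO_\infty$ and $ku^D = ku[\tfrac1p:p\in P]$ otherwise). This is exactly where the uniqueness result for the unit spectrum of periodic $K$-theory advertised in the abstract enters: it identifies the a priori purely $C^*$-algebraic unit spectrum with the classical $gl_1(KU)$, so that the higher Postnikov invariants are the topological ones rather than something exotic. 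Granting this, the resulting three-stage Postnikov tower of $B\Aut(D\otimes\K)$ has layers $K((K_0(D))^\times_{(+)},1)$ in degree $1$, an Eilenberg--MacLane layer $\Sigma^3 H(K_0(D))$ contributing $H^3(X,K_0(D))$ in degree $3$, and a $4$-connected cover with homotopy in degrees $5,7,9,\dots$ whose contribution to $[X,-]$ is the connective $K$-theory group $k^5(X,K_0(D))$ by Proposition~\ref{k-th}.

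It then remains to analyze the two $k$-invariants. For the one linking the degree-$3$ layer to the higher tail, one checks that over a finite complex the associated boundary maps vanish and the extension splits, yielding the summand $H^3(X,K_0(D))\oplus k^5(X,K_0(D))$ in all four cases. The $k$-invariant linking $\pi_1$ to the degree-$3$ layer produces the finite/infinite dichotomy and the twisted multiplication~\eqref{xcxc}. For $\ZZ$ and $M_P$ the group $(K_0(D))^\times_+$ is torsion free, this $k$-invariant in $H^3(K((K_0(D))^\times_+,1);K_0(D))$ vanishes, and one gets the untwisted direct sums (a) and (b), the factor $H^1(X,(\Z_P)^\times_+)$ being present exactly when $(K_0(D))^\times_+\neq\{1\}$. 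For $\OO_\infty$ and $M_P\otimes\OO_\infty$ the relevant part of $\pi_1$ is $\{\pm1\}\cong\Z/2$ and the $k$-invariant lies in $H^3(B\Z/2;K_0(D))$; identifying it with the nonzero class $\beta_P(x\cup x)$, the Bockstein of the square of the generator $x\in H^1(B\Z/2;\Z/2)$, produces precisely the group law~\eqref{xcxc}, while the torsion-free factor $(K_0(D))^\times_+$ (present only for $M_P\otimes\OO_\infty$) splits off as an untwisted summand $H^1(X,(\Z_P)^\times_+)$ because its cup-square Bockstein vanishes. This gives (c) and (d).

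The main obstacle I anticipate is the identification of the connecting $k$-invariant in the purely infinite cases with the Bockstein of the cup square: this requires translating the tensor-product (multiplicative) structure of $D\otimes\K$-bundles into the ring structure of $K$-theory and pinning down a specific cohomology operation $H^1(\,\cdot\,,\Z/2)\to H^3(\,\cdot\,,K_0(D))$, rather than merely reading off the additive homotopy groups. A second delicate point, logically prior to everything, is the core step itself: the identification of the higher part of $B\Aut(D\otimes\K)$ with topological connective $K$-theory rests essentially on the uniqueness theorem for $gl_1(KU)$ together with the vanishing of the intermediate $k$-invariants over finite complexes.
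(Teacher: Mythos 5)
Your overall architecture (low-degree layers giving ordinary cohomology, a higher tail giving connective $K$-theory, with the uniqueness theorem for the unit spectrum as the bridge from the $C^*$-algebraic to the topological world) parallels the paper, but there is a genuine gap at your ``core step.'' The uniqueness theorem only identifies the $C^*$-algebraic unit spectrum $gl_1(KU^{\C})$ with the topological unit spectrum $gl_1(KU)$; it does \emph{not} identify the higher part of $gl_1(KU)$ with a suspension of additive connective $K$-theory. After applying uniqueness, the tail you are looking at is $bsu^1_{\otimes}(X)$, the $2$-connected cover of $sl_1(KU)\simeq bu_{\otimes}$, whose infinite loop structure comes from the \emph{tensor product}. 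Your claim that this tail is ``(the relevant truncation of) $\Omega^\infty\Sigma^3 ku^D$'' and hence contributes $k^5(X,K_0(D))$ is exactly the assertion that $bsu_{\otimes}$ and $bsu_{\oplus}=\Sigma^4 ku$ give the same cohomology theory. This does not follow from having the same homotopy groups (the $k$-invariants of the two spectra are a priori different), nor from the uniqueness theorem; it is the content of the Adams--Priddy theorem on the uniqueness of $BSU$, which yields equivalences $(BSU_{\oplus})_{(p)}\simeq(BSU_{\otimes})_{(p)}$ of infinite loop spaces one prime at a time, after which the paper pieces together a \emph{non-natural} isomorphism $bsu^*_{\otimes}(X)\cong bsu^*_{\oplus}(X)$ for finite $X$ using finite generation of the groups. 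This is also why Theorem A's isomorphisms are explicitly not natural, whereas your proposed infinite-loop identification of the tail would make them natural --- a sign that such an identification is not available by these methods.

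A secondary problem: your splitting arguments via vanishing of $k$-invariants are not justified as stated. For $D=M_P$ you claim the $k$-invariant linking $\pi_1=(\Z_P)^\times_+$ to the degree-$3$ layer vanishes ``because $(\Z_P)^\times_+$ is torsion free,'' but $(\Z_P)^\times_+\cong\bigoplus_{p\in P}\Z$, so $K((\Z_P)^\times_+,1)$ is a torus and the relevant cohomology groups (e.g.\ $H^4$ with $\Z_P$-coefficients) are far from zero; torsion-freeness of $\pi_1$ proves nothing here. The paper instead \emph{constructs} a splitting: an explicit homomorphism $(\Z_P)^\times_+\to\Aut(M_P\otimes\K)$ built from automorphisms $\alpha_p$ realizing multiplication by $p$ on $K_0$, giving a product decomposition $B(K_0(D)^\times_+)\times B\Aut_0(D\otimes\K)\simeq B\Aut(D\otimes\K)$, and then uses freeness of $H^1(X,(\Z_P)^\times_+)$ to split the resulting exact sequence of groups. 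Likewise, splitting the $H^3$-layer off the tail is not an automatic boundary-map vanishing: the paper uses the natural map induced by $\C\to\ZZ$ (i.e.\ $K(\Z,3)=B\Aut(\K)\to B\Aut(\ZZ\otimes\K)$, which is $4$-connected) together with restriction to the $4$-skeleton, or, on the topological side, May's infinite loop splitting $BU_{\otimes}\simeq K(\Z,2)\times BSU_{\otimes}$. Your treatment of the twisted multiplication as a $k$-invariant equal to $\beta_P(w\cup w')$ is morally right (it is the Donovan--Karoubi phenomenon, which the paper imports from its companion paper rather than reproving), and you correctly flag it as a difficulty; but the Adams--Priddy step is the missing ingredient without which your computation cannot terminate in $k^5(X,\Z_P)$.
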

\noindent The isomorphisms above are not natural.
The article is structured as follows: In Section 2.1 we recall the definition of strongly self-absorbing $C^*$-algebras, discuss their classification and give their $K$-theory groups. To each such algebra one can associate a commutative symmetric ring spectrum $KU^D$ (see \cite{DP2}). Its definition is recalled in Section~2.2. Just like a commutative ring has a group of invertible elements (or units), the spectrum $KU^D$ has an associated unit spectrum, whose definition is based on commutative $\mI$-monoids, see \cite{paper:Schlichtkrull}. All of these notions are reviewed in Section 2.3. As we will see in later chapters the groups classifying bundles of stabilised strongly self-absorbing $C^*$-algebras can be expressed in terms of connective $K$-theory $ku$. Therefore we recall the main properties of $ku$ needed in the rest of the paper in Section~2.4.

In Section~3 we discuss the classification of $C^*$-algebra bundles with fibre $D \otimes \K$ for a strongly self-absorbing $C^*$-algebra $D$. We review the construction of the cohomology theories $E^*_D(X)$ and its variants at the beginning of Section~3. As shown in Section~3.1 these theories all split off a low-degree summand that can be expressed via ordinary cohomology groups with a ``twisted multiplication''. The complement of this summand, called $h(X)$ in Section~3, is identified in Section~3.2 with $bsu^1_{\otimes}(X)$. {Due to the Adams-Priddy result on the uniqueness of $bsu$, \cite{AP}, we can express  $bsu^1_{\otimes}(X)$  in terms of connective $K$-theory (which we review in Sect.2.4).} The subtle point in this last part is that we have to deal with two constructions of the units of topological $K$-theory: $gl_1(KU^\C)$ and $gl_1(KU)$ for a commutative $S$-algebra $KU$ representing $K$-theory. We will show in Sections~4 and 5 that both of these give the same cohomology theory.

Since the proof of the uniqueness result in Section~5 requires a lot of heavy machinery from stable homotopy theory, Section~4 outlines the preliminaries that we need: We discuss the definition of commutative $S$-algebras and their unit spectra in Section~4.1, which also contains the definition of the commutative $S$-algebra $KU$ that will play a crucial role in Section~5.

Section~5 of this paper is devoted entirely to the proof of the uniqueness result which gives a natural isomorphism $gl_1(KU^\C)^*(X) \cong gl_1(KU)^*(X)$. In Section~5.1 we explain how to move from units of commutative symmetric ring spectra to units of commutative $S$-algebras. We use the obstruction theory of Goerss and Hopkins in Section~5.2 to show that uniqueness follows if we have a unital and multiplicative isomorphism between a commutative $S$-algebra model $G$ for $K$-theory and $KU$ in the homotopy category. In the remaining Sections~5.3 to 5.6 we construct an intermediate spectrum $\Sigma^\infty \C P^\infty_+[b^{-1}]$, which has the benefit that maps from it to $G$ and $KU$ can be constructed from maps on $\C P^n$ that are easily obtained from the multiplicative natural transformation.

\section{Symmetric ring spectra representing $K$-theory and its localisations}
Topological $K$-theory is a multiplicative cohomology theory and can be modelled as a commutative symmetric ring spectrum \cite{Joachim:highercoherences}. In this form it has a natural extension $KU^D$ that takes a strongly self-absorbing $C^*$-algebra $D$ as input. For any such $D$ the group $K_0(D)$ is actually a ring and if $D$ satisfies the UCT, then $K_0(D) \subseteq \Q$ as a subring. The spectrum $KU^D$ represents topological $K$-theory with coefficients in $K_0(D)$, i.e.\ a localisation of $K$-theory. We recall the definition of strongly self-absorbing $C^*$-algebras, the construction of $KU^D$ and its unit spectrum in the sense of stable homotopy theory in the next sections.

\subsection{Strongly self-absorbing $C^*$-algebras} \label{sect:two}

A $C^*$-algebra $A$ absorbs another $C^*$-algebra $D$ tensorially if $A \otimes D \cong A$. As was already observed in \cite{Kir:class, Phi:class}, tensorial absorption properties are crucial in the classification programme for separable simple nuclear $C^*$-algebras. This prompted an analysis of strongly self-absorbing $C^*$-algebras \cite{WinterToms:ssa}. By definition a unital $C^*$-algebra $D$ belongs to this class if there exists an isomorphism $D \to D \otimes D$ that is approximately unitarily equivalent to $d \mapsto d \otimes 1$.

The two Cuntz algebras $\OO_2$ and $\OO_\infty$ with two, respectively infinitely many generators (see \cite{Cuntz:On}) are strongly self-absorbing. Another prominent example in this class is the Jiang-Su algebra $\ZZ$ (see \cite{JiaSu:Z}), which can be viewed as the infinite-dimensional stably finite counterpart of~$\C$, while $\OO_{\infty}$ is its purely infinite version. We have an isomorphism $\OO_\infty \otimes \ZZ \cong \OO_\infty$. The unital $*$-homomorphisms $\C \to \ZZ \to \OO_\infty$ are $KK$-equivalences. $\OO_2$ is KK-contractible.

All self-absorbing $C^*$-algebras that satisfy the UCT, with the exception of $\C$ and $\OO_2$, can be obtained as tensor products of either $\ZZ$ or $\OO_\infty$ with an infinite UHF-algebra. The construction of these is a $C^*$-algebraic version of the localisation at a set of primes: For a prime $p$, let $M_{p}$ denote the infinite tensor product $M_{p} = M_p(\C)^{\otimes \infty}$. For a set of primes $P$ define $M_P$ to be
\[
	M_P=\bigotimes_{p\in P} M_{p}\ .
\]
If $P = \emptyset$, then we set $M_P=\C$. There is a dichotomy for strongly self-absorbing $C^*$-algebras: they are either stably finite or purely infinite. In the first case the algebra is isomorphic to either $\C$, $\ZZ$, or $M_P$ for some nonempty set $P$ of primes, in the second to $\OO_\infty$,  $M_P\otimes \OO_\infty$ for some nonempty set $P$ of primes or to $\OO_2$, \cite{Winter:abel}. If $D$ is purely infinite, then $D\otimes \OO_\infty \cong D$.

If $D$ is strongly self-absorbing and $D\neq \C$, then $D\cong D\otimes \ZZ$. In particular, we have $M_P\otimes \ZZ \cong M_P$. If $P\subset P'$ is an inclusion of subsets of prime numbers, then $M_{P'}\otimes M_{P}\cong M_{P'}$. In case $P$ is the set of all primes, then we denote $M_P$ by $\QQ$. The $C^*$-algebra $\QQ$ is called the universal UHF-algebra. The relationship between the various strongly self-absorbing $C^*$-algebras is illustrated in the following diagram:
\begin{equation} \label{eqn:ssa_uct}
\begin{tikzcd}[row sep=0.3cm]
					& \ZZ\ar[dd] \ar[r]	& M_P \ar[dd] \ar[r]				& \QQ \ar[dd] \ar[dr] \\
 \C \ar[ur] \ar[dr]	& 					& 								& 								& \OO_2 \\
					& \OO_\infty \ar[r]	& \OO_\infty \otimes M_P\ar[r]	& \OO_\infty \otimes \QQ \ar[ur]	
\end{tikzcd}
\end{equation}
An arrow $D \to D'$ in this diagram not only indicates a unital embedding, but also the property $D' \otimes D \cong D'$.

For strongly self-absorbing $C^*$-algebras $D$ in the UCT class the $K$-theory groups {can be computed as follows, \cite{WinterToms:ssa}: $K_1(D)=0$} and as mentioned above, the group $K_0(D) \cong K_0(\OO_\infty \otimes D)$ has a natural unital commutative ring structure with multiplication induced by the isomorphism $D \otimes D \cong D$. Let $\Z_p=\Z[\frac{1}{p}]$ denote the localization of $\Z$ away from $p$ and $\Z_{(p)}$ the localization of $\Z$ at $p$. Then we have natural ring isomorphisms:
\begin{gather*}
	K_0(\C) \cong K_0(\ZZ) \cong K(\OO_\infty) \cong \Z \ ,\\
	K_0(M_P) \cong  K_0(M_P\otimes \OO_\infty) \cong \bigotimes_{p\in P}\Z_p=:\Z_P \ ,\\
	K_0(\OO_2)=0\ .
\end{gather*}
Mirroring the localisation of integers, we denote the algebra $M_P$ by $M_{(p)}$ in case $P$ is the set of all primes different from $p$. Thus: $K_0(M_{p})\cong \Z_p$, $K_0(M_{(p)})\cong \Z_{(p)}$, and $K_0(\QQ)\cong \Q$.

We will denote the invertible elements of the commutative ring $K_0(D)$ either by $K_0(D)^{\times}$ or by $GL_1(K_0(D))$. Note that $K_0(D)$ is an ordered group with positive cone given by the elements represented by classes of projections in $D \otimes \K$ (as opposed to formal differences). The subgroup of positive elements of $K_0(D)^{\times}$ is then denoted by $K_0(D)^{\times}_+$. Since the order structure is trivial if $D$ is purely infinite, we have $K_0(D)^{\times}_{+}=K_0(D)^{\times}$ in this case.

\subsection{The symmetric spectra $KU^D$}
A $\Z/2\Z$-grading on a $C^*$-algebra $A$ is an automorphism $\gamma \in \Aut(A)$ with $\gamma^2 = \id{A}$. We call the pair $(A, \gamma)$ a graded $C^*$-algebra and will often suppress the grading in the notation. Any $\Z/2\Z$-graded $C^*$-algebra $A$ has a Banach space decomposition $A \cong A^0 \oplus A^1$ with
\[
	A^0 = \{ a \in A \ | \ \gamma(a) = a \} \qquad \text{and} \qquad A^1 = \{ a \in A \ | \ \gamma(a) = -a \}
\]
such that the even part $A^0$ is a $C^*$-subalgebra and $A^i \cdot A^j \subset A^{i + j}$, where the supscript is taken modulo $2$ here. The elements $a \in A^i$ are said to be homogeneous and have degree $i$, which we denote by $\deg(a) = i$. If $\gamma = \id{A}$, then we call $A$ trivially graded. The (minimal) graded tensor product of two graded $C^*$-algebras $A$ and $B$ is a completion of the algebraic tensor product $A \odot B$ with the multiplication and involution defined on homogeneous elements by
\[
		(a \otimes b) \cdot (a' \otimes b') = (-1)^{\deg(a')\cdot \deg(b)} aa' \otimes bb' \qquad \text{and} \qquad (a \otimes b)^* = (-1)^{\deg(a)\cdot \deg(b)} a^* \otimes b^*
\]
The tensor flip $A \otimes B \to B \otimes A$ has to be decorated with the sign $(-1)^{\deg(a)\cdot \deg(b)}$ as well to be a $*$-isomorphism.

Two graded $C^*$-algebras will play a central role in the following construction: The Clifford algebra $\Cliff{n}$ is the unital $C^*$-algebra with self-adjoint generators $\{e_1, \dots, e_n\}$ and relations
\[
	e_i e_j + e_j e_i = \delta_{ij}\,1\ ,
\]
where $\delta_{ii}$ is the Kronecker delta. The grading $\gamma$ is defined on generators by $\gamma(e_i) = -e_i$, i.e.\ the elements $e_i$ are odd. These structures turn $\Cliff{n}$ into a finite-dimensional graded $C^*$-algebra.

The other non-trivially graded $C^*$-algebra that we will encounter is the graded suspension algebra $\Sgr = C_0(\R)$ equipped with the grading by odd and even functions. This algebra can be equipped with a coassociative and cocommutative comultiplication $\Delta \colon \Sgr \to \Sgr \otimes \Sgr$ that has a counit $\epsilon \colon \Sgr \to \C$ \cite[p.~94]{Joachim:highercoherences}.

Let $D$ be a strongly self-absorbing $C^*$-algebra, considered as a trivially graded algebra. It was shown in \cite{DP2} that the sequence of spaces $(KU^D_n)_{n \in \N_0}$ given by
\[
	KU^D_n = \hom_{\text{gr}}(\Sgr, (\Cliff{1} \otimes D \otimes \K)^{\otimes n})
\]
and equipped with the point-norm topology forms a commutative symmetric ring spectrum representing the cohomology theory $X \mapsto K_*(C(X) \otimes D)$. The multiplicative structure on $KU^D$ is induced by
\[
	KU^D_n \wedge KU^D_m \to KU^D_{n+m} \qquad, \qquad \varphi \wedge \psi \mapsto (\varphi \otimes \psi) \circ \Delta\ .
\]
Bott periodicity gives an element in $\hom_{\text{gr}}(\Sgr, C_0(\R) \otimes \Cliff{1})$ and hence by extension with a rank $1$-projection $1 \otimes e \in D \otimes \K$ an element in
\[
	\text{Map}_*(S^1, KU^D_1) \cong  \hom_{\text{gr}}(\Sgr, C_0(\R) \otimes \Cliff{1} \otimes D \otimes \K)
\]
(see \cite[Sec.~4.1]{DP2} for details). The unit map $\eta_n \colon S^n \to KU^D_n$ of the ring spectrum is constructed as an $n$-fold power of $\eta_1$ (using the above multiplication). As usual combining the unit maps with the multiplication gives the structure maps
\begin{equation} \label{eqn:structure_map}
	S^1 \wedge KU^D_n \to KU^D_{n+1}
\end{equation}
{Note that the space $KU^D_0 = \hom_{\text{gr}}(\Sgr, \C)\simeq S^0$ contains only two points. One is given by the zero homomorphism, the other one is given by the evaluation at $0$, which is the only evaluation that respects the grading. }
 Nevertheless, as we will see in Lem.~\ref{lem:multiplicative} all adjoints of the structure maps in degree $\geq 1$ are weak homotopy equivalences. Symmetric spectra with this property are called \emph{positive $\Omega$-spectra}. In fact, more is true:
\begin{lemma} \label{lem:multiplicative}
	Let $D$ be a strongly self-absorbing $C^*$-algebra. The spectrum $KU^D$ is a positive $\Omega$-spectrum that represents $X \mapsto K_*(C(X) \otimes D)$ as a multiplicative cohomology theory.
\end{lemma}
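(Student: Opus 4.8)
The plan is to deduce both assertions from a single Bott-periodicity statement for the functor $A \mapsto \hom_{\text{gr}}(\Sgr, A)$ on stable graded $C^*$-algebras, reading off the positive $\Omega$-spectrum property from it and recovering the multiplicative structure from the commutative symmetric ring spectrum structure of \cite{DP2}.

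First I would put the spaces in a symmetric form. Since $D \otimes \K$ is trivially graded, the graded tensor flips carry no signs when the $D\otimes\K$-factors are moved past the $\Cliff{1}$-factors; combining this with $\Cliff{1}^{\otimes n}\cong \Cliff{n}$ and the strong self-absorption isomorphisms $D^{\otimes n}\cong D$, $\K^{\otimes n}\cong\K$ yields graded $*$-isomorphisms
\[
	(\Cliff{1}\otimes D \otimes \K)^{\otimes n}\;\cong\;\Cliff{n}\otimes D \otimes \K,
	\qquad\text{hence}\qquad
	KU^D_n\;\cong\;\hom_{\text{gr}}(\Sgr,\, \Cliff{n}\otimes D \otimes \K).
\]
Write $A_n=\Cliff{n}\otimes D \otimes \K$; for $n\geq 1$ this is a stable graded $C^*$-algebra, whereas $A_0=\C$ is not.

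The core of the argument is the Bott map. The Bott element of \cite[Sec.~4.1]{DP2} in $\hom_{\text{gr}}(\Sgr, C_0(\R)\otimes \Cliff{1})$, together with the comultiplication $\Delta$, defines for every stable graded $A$ a natural map
\[
	\beta_A\colon \hom_{\text{gr}}(\Sgr, A)\longrightarrow \Omega\,\hom_{\text{gr}}(\Sgr,\, \Cliff{1}\otimes A),
\]
using the adjunction $\mathrm{Map}_*(S^1,\hom_{\text{gr}}(\Sgr,B))\cong\hom_{\text{gr}}(\Sgr, C_0(\R)\otimes B)$; Bott periodicity in the homomorphism picture (Higson--Kasparov--Trout, in the form used in \cite{DP2} and \cite{Joachim:highercoherences}) asserts that $\beta_A$ is a weak homotopy equivalence whenever $A$ is stable. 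I would then identify $\beta_{A_n}$ with the adjoint of the structure map \eqref{eqn:structure_map}: unwinding the definition of $\eta_1$ as the Bott element extended by the rank-one projection $1\otimes e\in D\otimes\K$, the adjoint structure map $KU^D_n\to \Omega KU^D_{n+1}$ is precisely multiplication by $\eta_1$ in the ring spectrum, and under the identifications above this is $\beta_{A_n}$ up to the canonical rearrangement of tensor factors. As $A_n$ is stable for $n\geq 1$, all adjoint structure maps in degree $\geq 1$ are weak equivalences, so $KU^D$ is a positive $\Omega$-spectrum; the map at $n=0$ fails to be an equivalence exactly because $A_0=\C$ is not stable, consistent with $KU^D_0\simeq S^0$.

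Finally, the homotopy groups of the spaces $\hom_{\text{gr}}(\Sgr, \Cliff{n}\otimes D\otimes\K)$ compute the $K$-theory of $D$, so that on a finite CW-complex $X$ the represented groups $[X_+, KU^D_n]$ recover $K_*(C(X)\otimes D)$, as established in \cite{DP2}; together with the positive $\Omega$-spectrum property this gives the representing statement. Multiplicativity requires no extra work: the maps $\varphi\wedge\psi\mapsto(\varphi\otimes\psi)\circ\Delta$ make $KU^D$ a commutative symmetric ring spectrum \cite{DP2} (cocommutativity and coassociativity of $\Delta$ with counit $\epsilon$ giving the ring axioms), and a commutative ring structure on a positive $\Omega$-spectrum endows the represented theory with a graded-commutative product. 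The hard part will be the identification in the previous paragraph: matching the combinatorics of $\eta_1$, the comultiplication $\Delta$, and the tensor rearrangements with the abstract Bott map $\beta_{A_n}$, and verifying that stability is exactly the feature separating $n\geq 1$ from $n=0$.
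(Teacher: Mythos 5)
Your treatment of the positive $\Omega$-spectrum property is sound and is essentially the paper's: the paper simply cites \cite[Thm.~4.2]{DP2} for the statement that the adjoints $KU^D_n \to \Omega KU^D_{n+1}$ are weak equivalences for $n \geq 1$, which is what your Bott-map argument re-derives. The genuine gap is in your final paragraph. The lemma does not merely assert that the theory represented by $KU^D$ carries \emph{some} graded-commutative product (which, as you say, is automatic from the commutative ring spectrum structure); it asserts that $KU^D$ represents $X \mapsto K_*(C(X)\otimes D)$ \emph{as a multiplicative cohomology theory}, i.e.\ that the natural isomorphism $[X, KU^D_n] \cong K_n(C(X)\otimes D)$ coming from \cite{paper:Trout} intertwines the product induced by $\varphi \wedge \psi \mapsto (\varphi\otimes\psi)\circ\Delta$ with the usual $K$-theory product induced by the tensor product of projections. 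This compatibility is not formal: a priori the ring-spectrum product could induce a different ring structure on the same groups under that identification. Verifying it is the actual substance of the paper's proof, and you have dismissed it as ``no extra work,'' while flagging as the hard part an identification (structure maps versus Bott map) that is already contained in \cite[Thm.~4.2]{DP2}.

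The paper's verification runs as follows: by Bott periodicity and a suspension argument one reduces to checking that $KU^D_2 \wedge KU^D_2 \to KU^D_4$ implements the product on $K_0(C(X)\otimes D) \cong K_2(C(X)\otimes D)$. Using $\Cliff{2}\cong M_2(\C)$ with the diagonal/off-diagonal grading, a difference $[p_i]-[q_i]$ of projection classes is represented by the homomorphism $f \mapsto \epsilon(f)\,\begin{pmatrix} p_i & 0 \\ 0 & q_i\end{pmatrix}$, and the counit identity $(\epsilon\otimes\epsilon)\circ\Delta=\epsilon$ shows that the spectrum-level product of two such elements is again of this form, with diagonal entries $p_1\otimes p_2 \oplus q_1\otimes q_2$ and $p_1\otimes q_2\oplus q_1\otimes p_2$; this is precisely a representative of $([p_1]-[q_1])\cdot([p_2]-[q_2])$. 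Some such computation, or an equivalent identification of the two products, must be added to your argument; as it stands, the proposal proves a strictly weaker statement.
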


\begin{proof}
	For $n \geq 1$ the adjoint $KU^D_n \to \Omega KU^D_{n+1}$ of \eqref{eqn:structure_map} is a weak equivalence by \cite[Thm.~4.2]{DP2}. Thus, $KU^D$ is a positive $\Omega$-spectrum. By \cite[Thm.~4.7]{paper:Trout} there are natural isomorphisms
	\[
		[X, KU^D_n] \cong [\Sgr, C(X) \otimes \Cliff{n} \otimes D \otimes \K]_{\text{gr}} \cong KK(\C, C(X) \otimes \Cliff{n} \otimes D) \cong K_n(C(X) \otimes D)\ ,
	\]
	for $n\geq 1$, where $[\,\cdot\, , \,\cdot\,]_{\text{gr}}$ denotes the homotopy classes of graded homomorphisms. It remains to be checked that this is compatible with the multiplicative structure. Using Bott periodicity and a suspension argument this can be reduced to the question whether $KU^D_2 \wedge KU^D_2 \to KU^D_4$ implements the multiplication in $K_0(C(X) \otimes D) \cong K_2(C(X) \otimes D)$.
	
	Note that $\Cliff{2} \cong M_2(\C)$ if the right hand side is equipped with the diagonal/off-diagonal grading. Let $p_1, p_2, q_1, q_2 \in C(X) \otimes D \otimes \K$ be projections and consider
	\[
		\varphi_{p_i,q_i} \colon \Sgr \to \Cliff{2} \otimes C(X) \otimes D \otimes \K \quad , \qquad f \mapsto \epsilon(f)\,
		\begin{pmatrix}
			p_i & 0 \\
			0 & q_i
		\end{pmatrix}\ .
	\]
	By \cite[p.~303]{paper:Trout} this represents the element $[p_i] - [q_i] \in K_0(C(X) \otimes D)$ in $[X,KU_2^D]$. We have $\Cliff{2} \otimes \Cliff{2} \cong \Cliff{2} \otimes M_2(\C)$ with $M_2(\C)$ trivially graded. Moreover, $(\epsilon \otimes \epsilon) \circ \Delta = \epsilon$, since $\epsilon$ is a counit. Hence, we obtain
	\[
		((\varphi_{p_1,q_1} \otimes \varphi_{p_2,q_2}) \circ \Delta)(f) = \epsilon(f)\,
		\begin{pmatrix}
			p_1 \otimes p_2 \oplus q_1 \otimes q_2  & 0 \\
			0 & p_1 \otimes q_2 \oplus q_1 \otimes p_2
		\end{pmatrix}\ ,
	\]
	which indeed represents the class $([p_1] - [q_1]) \cdot ([p_2] - [q_2]) \in K_0(C(X) \otimes D)$.
\end{proof}

\subsection{The unit spectrum of $KU^D$}\label{subsec: units-KUd}
{Following Schlichtkrull \cite{paper:Schlichtkrull}}, we give a brief outline of how to define the units of a commutative symmetric ring spectrum. This is based on the following model for $E_{\infty}$-spaces: Let $\mI$ be the category with objects the sets $\mathbf{n} = \{1,\dots,n\}$ for $n \in \N_0$ (with $\mathbf{0} = \emptyset$) and injective maps as morphisms. This is a symmetric monoidal category, where the tensor product is given by $\mathbf{n} \sqcup \mathbf{m} = \{1, \dots, n+m\}$ on objects and on morphisms by identifying $\mathbf{n}$ with $\{1,\dots, n\} \subset \mathbf{n} \sqcup \mathbf{m}$ and $\mathbf{m}$ with $\{n+1, \dots, n+m\} \subset \mathbf{n} \sqcup \mathbf{m}$. The object $\mathbf{0}$ is the monoidal unit and the symmetry $\mathbf{n+m} \to \mathbf{m+n}$ is given by a block permutation.

An $\mI$-space is a functor $X \colon \mI \to \mathcal{T}$ from $\mI$ to the category of based topological spaces. A morphism of $\mI$-spaces is a natural transformation. The category of $\mI$-spaces also has a symmetric monoidal structure: For two given $\mI$-spaces $X$ and $Y$, the tensor product $X \boxtimes Y$ is defined as the left Kan extension of the $\mI^2$-space $X \times Y$ along $\sqcup \colon \mI \times \mI \to \mI$. This definition implies that a morphism $X \boxtimes Y \to Z$ of $\mI$-spaces $X, Y, Z$ is the same as a natural transformation
\[
	X(\mathbf{n}) \times Y(\mathbf{m}) \to Z(\mathbf{n} \sqcup \mathbf{m})
\]
of $\mI^2$-spaces. A (commutative) $\mI$-monoid is a (commutative) monoid in the category of $\mI$-spaces. Note that the category $\mI$ is denoted by $\mathbb{I}$ in \cite{paper:LindUnits} and $\mI$-monoids are called $\mathbb{I}$-FCPs. If $X$ is a commutative $\mI$-monoid, then
\[
	X_{h\mI} = \hocolim_\mI X
\]
is an $E_\infty$-space \cite[Rem.~4.3]{paper:LindUnits}. One should think of the homotopy colimit of an $\mI$-space as the homotopy type modelled by it.

Let $(R_n)_{n \in \N_0}$ be a commutative symmetric ring spectrum and define $(\Omega^\bullet R)(\mathbf{n}) = \Omega^n R_n$. This extends to an $\mI$-space $\Omega^\bullet R$, which should model the infinite loop space underlying the spectrum $R$. However, there is a caveat here, since the homotopy groups of $(\Omega^\bullet R)_{h\mI}$ and~$R$ do not necessarily agree (see also \cite[Rem.~2.1]{paper:LindUnits}). If $R$ is represented by a positive $\Omega$-spectrum, which we will assume for the rest of this paragraph, then this problem does not arise. In analogy to \eqref{eqn:space_of_units} we define the $\mI$-space of units $(\Omega^\bullet R)^\times(\mathbf{n})$ of $R$ to be the subspace of $\Omega^n R_n$ consisting of those based maps $f \colon S^n \to R_n$ which are stably invertible in the sense that there exists $g \colon S^m \to R_m$ such that
\[
	\mu \circ (f \wedge g) \colon S^{n+m} \to R_{n+m}
\]
is homotopic to the unit map of the ring spectrum, where $\mu \colon R_n \wedge R_m \to R_{n+m}$ denotes its multiplication. The smash product $(f,g) \mapsto \mu \circ (f \wedge g)$ gives $(\Omega^\bullet R)^\times$ the structure of a commutative $\mI$-monoid and
\[
	(\Omega^\bullet R)^\times_{h\mI} \simeq GL_1(R)\ .
\]
Thus, the space underlying $(\Omega^\bullet R)^\times$ is $GL_1(R)$, but the commutative $\mI$-monoid reveals an $E_\infty$-structure on it. To obtain the spectrum $gl_1(R)$ from this, we can turn $(\Omega^\bullet R)^\times$ into a $\Gamma$-space and apply an infinite loop space machine to it turning it into a weak $\Omega$-spectrum.
For details we refer the reader to \cite{paper:Schlichtkrull} or to \cite[Construction~12.1 and Def.~12.5]{paper:LindUnits}.


{Applying this functor to $KU^\C$ gives a first way of defining the unit spectrum of topological $K$-theory and which we denote by $gl_1(KU^\C)$.
We will define the unit spectrum of topological $K$-theory in a second second way  in Section~\ref{Sect.4} and denote the corresponding object by $gl_1(KU)$. The two spectra $gl_1(KU^\C)$ and
 $gl_1(KU)$ will be compared in Section~\ref{Sect.5}.}

 We finish this part by listing some comparison results among the spectra $gl_1(KU^D)$. Recall that a {\it level equivalence} is a map of symmetric spectra which induces a weak equivalence of the $n$-th spaces of the spectra for all $n\geq 0$ (\cite[Definition 6.1(i)]{MMSS}). A level equivalence is a stable equivalence (\cite[bottom of page 466]{MMSS}). The converse is not true in general but it is true if the spectra are $\Omega$-spectra (\cite[Lemma 8.11]{MMSS}).

The spectra $KU^D$ are positive $\Omega$-spectra by Lem.~\ref{lem:multiplicative} and therefore fibrant objects in the model category of commutative symmetric ring spectra by \cite[Thm.~14.2]{MMSS}. By \cite[Lem.~13.5]{paper:LindUnits} the functor $gl_1$ preserves weak equivalences between fibrant objects. The $*$-homomorphisms $\C \to \ZZ$ and $\C \to \OO_{\infty}$ induce level and therefore stable equivalences of commutative symmetric ring spectra $KU^\C \to KU^\ZZ$ and $KU^\C \to KU^{\OO_\infty}$. Given any set of primes $P$ the unital $*$-homomorphism $M_P \to M_P \otimes \OO_\infty$ gives a stable equivalence $KU^{M_P} \to KU^{M_P \otimes \OO_\infty}$. From these we therefore obtain equivalences of weak $\Omega$-spectra
\begin{align}\label{eq:useful}
	gl_1(KU^\C) & \simeq gl_1(KU^\ZZ) \simeq gl_1(KU^{\OO_\infty})\ , \\
	gl_1(KU^{M_P}) & \simeq gl_1(KU^{M_P \otimes \OO_{\infty}}) \ .
\end{align}

\subsection{Connective K-theory}
Let $ku$ be a spectrum representing connective $K$-theory. It is the connective cover i.e.\ the $(-1)$-connected cover of the spectrum $KU$ of complex periodic K-theory. There is a map $ku \to KU$ of spectra which induces isomorphisms on homotopy groups in non-negative degrees while $\pi_i(ku)=0$ for $i<0$. Thus, the homotopy types of the first few spaces forming the spectrum $ku$ are:
\[
	BU\times \Z, \, U, \, BU, \, SU, \, BSU, \,  BBSU\, \dots
\]
The spectrum $bsu_{\oplus}$ is the $3$-connected cover of $ku$ (or of $KU$) (see \cite{AP}). Hence, we can identify $bsu_{\oplus}$ with $\Sigma^4 ku$. Let $k^*$ be the cohomology theory associated to $ku$. By our observations there is a natural isomorphism	
\begin{equation}\label{bbssuu}
bsu_{\oplus}^n(X) \cong k^{n+4}(X),\,\, \text{for all}\, n\geq 0.
\end{equation}
For our purposes, it is useful to address the question of computing $bsu_{\oplus}^1(X)\cong  k^5(X)$. Let $H\Z$ denote the usual Eilenberg-Mac Lane spectrum representing ordinary cohomology. The Bott operation $b$ induces a  cofiber sequence of spectra
\[
\begin{tikzcd}
	{\Sigma^2 ku} \ar[r, "b"] & ku \ar[r]  &H\Z
\end{tikzcd}
\]
and hence  a long exact sequence
\[
\begin{tikzcd}
 0 \ar[r] & k^3(X) \ar[r,"b"] & k^1(X) \arrow[r,shift right=2pt] & H^1(X,\Z) \ar[l,shift right=2pt] \ar[dll] \\
 0 \ar[r] & k^4(X) \ar[r,"b" below] & k^2(X) \ar[r,"c_1"] & H^2(X,\Z)\ar[dll] \\
 0 \ar[r] & k^5(X) \ar[r,"b" below] & k^3(X) \ar[r] & H^3(X,\Z)\ar[dll] \\
 & k^6(X) \ar[r,"b" below] & k^4(X) \ar[r] & H^4(X,\Z)	
\end{tikzcd}
\]
We see that $k^1(X)\cong K^1(X)$ and $k^3(X)\cong \mathrm{ker} \big(K^1(X)\to H^1(X,\Z))$ so that
\begin{equation}\label{eqk5}
	k^5(X) \cong \mathrm{ker} \big(k^3(X)\to H^3(X,\Z)\big)\ .
\end{equation}
The maps $k^i(X)\to H^i(X,\Z)$ are induced by the delopings of determinant map $\det \colon U \to U(1)$. The map $c_1$ identifies with the first Chern class and is surjective.

For a different description of $k^5(X)$, one verifies that if $X$ is a finite  CW-complex with i-skeleta $X_i$, $i\geq 0$, then $k^3(X)\cong K^1(X,X_1)$, $k^5(X)\cong K^1(X,X_3)$ is isomorphic to the kernel of the restriction map $K^1(X,X_1)\to K^1(X_3,X_1)$.
Indeed, for the benefit of the reader we include the following proposition known to the experts:
\begin{proposition}\label{k-th}
  If $X$ is a finite  CW-complex with base point, the pair $X_{i-2}\subset X$ induces an isomorphism of reduced theories $\widetilde{k}^i(X)\cong \widetilde{k}^i(X, X_{i-2})\cong \widetilde{K}^i(X, X_{i-2})$.
\end{proposition}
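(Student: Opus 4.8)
The plan is to prove the two isomorphisms separately. In each case the map is induced by the quotient $X \to X/X_{i-2}$ (respectively by $ku \to KU$), and in each case the claim reduces to a vanishing statement that follows from the connectivity of the representing spectrum together with the Atiyah--Hirzebruch spectral sequence. Throughout I use that for the good pair $(X,X_{i-2})$ one has $\widetilde{k}^i(X,X_{i-2})\cong \widetilde{k}^i(X/X_{i-2})$, and likewise for $\widetilde{K}$, with the long exact sequence of the pair being that of the cofibre sequence $X_{i-2}\to X\to X/X_{i-2}$.

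For the first isomorphism $\widetilde{k}^i(X)\cong\widetilde{k}^i(X,X_{i-2})$, I would write down the long exact sequence of the pair in reduced connective $K$-theory,
\[
\widetilde{k}^{i-1}(X_{i-2})\to \widetilde{k}^i(X,X_{i-2})\to \widetilde{k}^i(X)\to \widetilde{k}^i(X_{i-2}),
\]
and observe that it suffices to show the two outer groups vanish. Since $ku$ is connective, i.e.\ $\pi_q(ku)=0$ for $q<0$, the spectral sequence $E_2^{p,q}=\widetilde{H}^p(Y;\pi_{-q}(ku))\Rightarrow \widetilde{k}^{p+q}(Y)$ has nonzero $E_2$-terms only when $q\le 0$ and $0\le p\le \dim Y$, whence $p+q\le \dim Y$ and so $\widetilde{k}^n(Y)=0$ for all $n>\dim Y$. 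Applying this with $Y=X_{i-2}$, which has dimension $\le i-2$, kills both $\widetilde{k}^{i-1}(X_{i-2})$ and $\widetilde{k}^i(X_{i-2})$, so the middle map is an isomorphism.

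For the second isomorphism I would use the cofibre sequence of spectra $ku\to KU\to C$. A chase of the long exact sequence on homotopy groups, using that $ku\to KU$ is an isomorphism in degrees $\ge 0$ while $\pi_q(ku)=0$ for $q<0$, shows that $\pi_m(C)=0$ for $m\ge -1$ and $\pi_m(C)\cong\pi_m(KU)$ for $m\le -2$; thus $C$ is coconnective with bottom homotopy in degree $-2$. The cofibre sequence induces a long exact sequence
\[
\widetilde{C}^{i-1}(Y)\to \widetilde{k}^i(Y)\to \widetilde{K}^i(Y)\to \widetilde{C}^i(Y),
\]
and I apply it with $Y=X/X_{i-2}$. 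Collapsing the $(i-2)$-skeleton leaves a complex with cells only in dimensions $\ge i-1$, so $\widetilde{H}^p(X/X_{i-2})=0$ for $p\le i-2$. The spectral sequence $E_2^{p,q}=\widetilde{H}^p(X/X_{i-2};\pi_{-q}(C))$ therefore has nonzero terms only for $p\ge i-1$ and, by coconnectivity of $C$, for $q\ge 2$, forcing $p+q\ge i+1$ and hence $\widetilde{C}^n(X/X_{i-2})=0$ for all $n\le i$. In particular the two outer groups vanish in degrees $i-1$ and $i$, so $\widetilde{k}^i(X,X_{i-2})\to\widetilde{K}^i(X,X_{i-2})$ is an isomorphism.

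The whole argument hinges on matching the connectivity of the spectra against the connectivity of the quotient complex, and the only real bookkeeping point is that collapsing the $(i-2)$-skeleton raises the bottom cell dimension to $i-1$, which is exactly what clears the connective obstruction in the two degrees $i-1,i$ in the first step and the coconnective obstruction in those same degrees in the second step. I do not expect a genuine obstacle beyond tracking these ranges carefully; the result is standard. As an alternative route to the coconnectivity of $C$ one can iterate the Bott cofibre sequence $\Sigma^2 ku\xrightarrow{b} ku\to H\Z$ to identify $C$, up to a filtration, with $\bigvee_{m\ge 1}\Sigma^{-2m}H\Z$, which makes the vanishing of $\widetilde{C}^n(X/X_{i-2})$ for $n\le i$ immediate from the connectivity of $X/X_{i-2}$.
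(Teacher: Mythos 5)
Your proof is correct. For the first isomorphism you argue essentially as the paper does: the long exact sequence of the pair reduces the claim to the vanishing of $\widetilde{k}^{i-1}(X_{i-2})$ and $\widetilde{k}^{i}(X_{i-2})$, which the paper deduces from the $(r-1)$-connectivity of the representing spaces $bu(r)$ (so that $[Y,bu(r)]=0$ when $\dim Y\le r-1$) and you deduce from the Atiyah--Hirzebruch spectral sequence; these are the same connectivity fact in different packaging. For the second isomorphism your route is genuinely different. The paper stays inside the Bott cofibre sequence $\Sigma^2 ku \xrightarrow{b} ku \to H\mathbb{Z}$: for the $(i-2)$-connected space $Y=X/X_{i-2}$ it shows that $b$ induces isomorphisms $\widetilde{k}^{j+2}(Y)\cong \widetilde{k}^{j}(Y)$, iterates these down into non-positive degrees where connective and periodic $K$-theory agree, and then comes back up by Bott periodicity of $\widetilde{K}^*$. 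You instead form the cofibre $C$ of $ku\to KU$ once and for all, compute that $\pi_m(C)=0$ for $m\ge -1$, and kill $\widetilde{C}^{i-1}(Y)$ and $\widetilde{C}^{i}(Y)$ by a single spectral-sequence weight count ($p\ge i-1$ and $q\ge 2$ force $p+q\ge i+1$). What your approach buys is a one-step comparison that isolates the whole difference between $ku$ and $KU$ in the coconnective spectrum $C$, and it would apply verbatim to any connective cover $e\to E$; what the paper's approach buys is the avoidance of spectral sequences altogether --- it uses only long exact sequences, connectivity of the spaces in the spectrum, and Bott periodicity, and it reuses the $H\mathbb{Z}$ cofibre sequence already introduced to compute $k^5(X)$. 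Both arguments turn on the same numerical point: $X/X_{i-2}$ has cells only in dimensions $\ge i-1$, while the ``error theory'' ($H\mathbb{Z}$ in the paper, $C$ in your argument) is concentrated in a range that clears degrees $i-1$ and $i$. Indeed, your closing remark --- identifying $C$ up to a filtration with $\bigvee_{m\ge 1}\Sigma^{-2m}H\mathbb{Z}$ by iterating the Bott sequence --- is essentially the paper's iterative argument repackaged.
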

\begin{proof}
  The long exact sequence
  \[
	\begin{tikzcd}
		{} \ar[r] & \widetilde{k}^{i-1}(X_{i-2})\ar[r] & \widetilde{k}^i(X, X_{i-2})\ar[r] & \widetilde{k}^i(X)\ar[r] \ar[r] & \widetilde{k}^{i}(X_{i-2})\ar[r] &{}
	\end{tikzcd}
	\]
gives the isomorphism $\widetilde{k}^i(X)\cong \widetilde{k}^i(X, X_{i-2})$ since $\widetilde{k}^{i-1}(X_{i-2})=\widetilde{k}^{i}(X_{i-2})=0$ as the space $bu(r)$ is $(r-1)$-connected by construction and hence
$\widetilde{k}^{r}(Y)=[Y,bu(r)]=0$ if $\mathrm{dim}(Y)\leq r-1$.
Next, the exact sequence
\[
	\begin{tikzcd}
	\widetilde{H}^{j-1}(Y,\Z)\ar[r] & \widetilde{k}^{j+2}(Y)\ar[r] & \widetilde{k}^{j}(Y)\ar[r] \ar[r] & \widetilde{H}^{j}(Y,\Z)
	\end{tikzcd}
	\]
shows that $\widetilde{k}^{j+2}(Y)\cong\widetilde{k}^{j}(Y)$ if $Y$ is $j$-connected and hence
\[\widetilde{k}^{j+2}(Y)\cong\widetilde{k}^{j}(Y)\cong \widetilde{k}^{j-2}(Y)\cong \cdots \cong \widetilde{K}^{j-2m}(Y)\cong \widetilde{K}^{j}(Y)\] for $2m\geq j$. Aplying this isomorphism for $Y=X/X_{i-2}$ (which is $(i-2)$-connected) one obtains that
$\widetilde{k}^i(X, X_{i-2})\cong \widetilde{K}^i(X, X_{i-2})$.
\end{proof}
We refer the reader to \cite{Segal:K-homology}, \cite{DN:shape} and \cite{DadMcClure:When} for other applications of connective K-theory in operator algebras.

\section{Units of $K$-theory and the classification of bundles}
Let $D$ be a stably finite strongly self-absorbing $C^*$-algebra satisfying the UCT. From \eqref{eqn:ssa_uct} we know that $D \cong M_P$ for some set $P$ (possibly empty) of prime numbers. It was shown in \cite{DP1, DP2} that the isomorphism classes of locally trivial $C^*$-algebra bundles with fibre $D \otimes \K$ form a group with respect to the fibrewise tensor product. Each of these groups is part of a cohomology theory closely related to the one represented by the spectrum $gl_1(KU^D)$. The reason for the existence of these theories is an infinite loop space structure on the spaces
\[
	\Aut_0(D\otimes \K), \,\,\Aut(D\otimes \K),\,\, \Aut(D\otimes \OO_\infty \otimes  \K) \quad\text{and}\quad  \Aut_{\text{gr}}(\Cliff{1} \otimes D\otimes  \K) \ ,
\]
where $\Aut_0(\,\cdot\,)$ denotes the identity component of the automorphism group and $\Aut_{\text{gr}}(\,\cdot\,)$ are the automorphisms that preserve the grading. In fact, each of these spaces is an $E_\infty$-space that can be modelled by a commutative $\mI$-monoid naturally associated to $D$. These $\mI$-monoids and our notation for the associated cohomology theory are listed in Table~\ref{tab:list_of_coh}.
\begin{table}[h]
{\renewcommand{\arraystretch}{1.2}
\begin{tabular}{|rcl|l|}
	\hline
	\multicolumn{3}{|c|}{commutative $\mI$-monoid} & cohomology theory \\
	\hline
	$\bar{\G}_D(\mathbf{n})$ & $=$ & $\Aut_0(( D \otimes \K)^{\otimes n})$ & $\bar{E}^*_{D}(X)$ \\
	$\G_D(\mathbf{n})$ & $=$ & $\Aut(( D \otimes \K)^{\otimes n})$ & $E^*_{D}(X)$ \\
	$\G_{D\otimes \OO_\infty}(\mathbf{n})$ & $=$ & $\Aut(( D\otimes \OO_\infty \otimes \K)^{\otimes n})$ & $E^*_{D\otimes \OO_\infty}(X)$ \\
	$\Ggr_D(\mathbf{n})$ & $=$ & $\Autgr((\Cliff{1} \otimes D \otimes \K)^{\otimes n})$ & $\hE^*_{D}(X)$ \\
	\hline
\end{tabular}
\vspace*{3mm}
\caption{\label{tab:list_of_coh}Cohomology theories associated to the automorphism groups}
}
\end{table}

We refer the reader to \cite[Sec.~4.2]{DP2} and to \cite{DP4} for further details about their construction. Note that each of these $\mI$-monoids takes values in topological groups, and if $G$ denotes one of them, then $\mathbf{n} \to BG(\mathbf{n})$ (defined by taking classifying spaces levelwise) is a commutative $\mI$-monoid as well. It was shown in \cite[Thm.~3.6]{DP2} that the spectrum obtained from $BG$ is in fact the shifted version of the spectrum associated to $G$ itself. This is not obvious, since the classifying space delooping could a priori be different from the one produced by the infinite loop space machine. As a consequence we have $[X,B\Aut(D \otimes \K)] \cong E^1_D(X)$ and similarly for the other variations {listed in the table above.}

We will need the following result contained in Theorem 5.4 from \cite{DP4}. Recall that
the unit spectrum of $KU^D$, denoted by $gl_1(KU^{D})$ was discussed
 in subsec. \ref{subsec: units-KUd}.
\begin{theorem}
 Let $D$ be a stably finite strongly self-absorbing $C^*$-algebra that satisfies the UCT.
 There is a natural action of $\Ggr_D$  on the ring spectrum $KU^D$  which induces a map
 \begin{equation} \label{eqn:map_of_Gamma}
		\Gamma(\Ggr_D) \to \Gamma((\Omega^\bullet KU^D)^\times)
	\end{equation}
	of the associated $\Gamma$-spaces which  {in its turn} induces an equivalence  of the underlying connective spectra in the stable homotopy category. In particular we obtain a natural isomorphism
\begin{equation} \label{eqn:key-unit}
		\hE^*_{D}(X) \cong gl_1(KU^{D})^*(X)
	\end{equation}
for $X$ a finite CW-complex.
\end{theorem}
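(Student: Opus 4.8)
The plan is to construct the action concretely and then reduce the equivalence claim to a comparison of homotopy groups. First I would let $\alpha \in \Ggr_D(\mathbf{n}) = \Autgr((\Cliff{1}\otimes D\otimes\K)^{\otimes n})$ act on $KU^D_n = \hom_{\text{gr}}(\Sgr,(\Cliff{1}\otimes D\otimes\K)^{\otimes n})$ by post-composition, $\varphi \mapsto \alpha\circ\varphi$. The verification that this is an action through maps of symmetric ring spectra is essentially formal: since the multiplication sends $\varphi\wedge\psi$ to $(\varphi\otimes\psi)\circ\Delta$ and the monoidal product of $\Ggr_D$ is $(\alpha,\beta)\mapsto\alpha\otimes\beta$, one has $((\alpha\circ\varphi)\otimes(\beta\circ\psi))\circ\Delta = (\alpha\otimes\beta)\circ((\varphi\otimes\psi)\circ\Delta)$, so post-composition intertwines the two products; equivariance for the block permutations and the fact that the identity automorphism is sent to the unit maps $\eta_n$ are checked the same way.

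Next I would extract the map of $\mI$-monoids. Sending $\alpha$ to the based map $\alpha\circ\eta_n\colon S^n \to KU^D_n$ defines an element of $\Omega^n KU^D_n$, and it is stably invertible: taking the inverse automorphism $\alpha^{-1}$ at the same level, the product $\mu\circ((\alpha\circ\eta_n)\wedge(\alpha^{-1}\circ\eta_n)) = (\alpha\otimes\alpha^{-1})\circ\eta_{2n}$ represents the class $[\alpha]\cdot[\alpha^{-1}] = 1 \in K_0(D)^\times$ and is therefore homotopic to $\eta_{2n}$. Hence $\alpha\mapsto\alpha\circ\eta_n$ is a morphism of commutative $\mI$-monoids $\Ggr_D\to(\Omega^\bullet KU^D)^\times$, which gives the asserted map $\Gamma(\Ggr_D)\to\Gamma((\Omega^\bullet KU^D)^\times)$ of $\Gamma$-spaces and, after the infinite loop space machine, a map of the associated connective spectra.

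To prove this spectrum map is a stable equivalence it suffices, since both sides are connective, to check that it is an isomorphism on all homotopy groups. On the target the standard computation for units of a ring spectrum gives $\pi_0(gl_1(KU^D)) = K_0(D)^\times$ and $\pi_k(gl_1(KU^D)) \cong \pi_k(KU^D)$ for $k\geq 1$; as $KU^D$ is $2$-periodic with $\pi_{2j}(KU^D)=K_0(D)$ and $\pi_{2j+1}(KU^D)=0$, these are $K_0(D)$ in positive even degrees and vanish in odd degrees. On the source I would invoke the computation of $\pi_*\Autgr(\Cliff{1}\otimes D\otimes\K)$ from the earlier Dadarlat--Pennig work. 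The key structural observation that makes the two sides agree is that the odd self-adjoint unitary $e_1\in\Cliff{1}$ has $\mathrm{Ad}(e_1)$ a graded automorphism realizing the negative units, so that $\pi_0$ of the source is the full group $K_0(D)^\times$ and not merely the positive cone $K_0(D)^\times_+$ that governs the ungraded group $\Aut(D\otimes\K)$; this is precisely why the theorem is phrased with $\Cliff{1}$ and graded automorphisms.

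The main obstacle is this last step: showing that the explicit assignment $\alpha\mapsto\alpha\circ\eta_n$ realizes the abstract matching of homotopy groups, i.e.\ that it is surjective onto $\pi_0=K_0(D)^\times$ and an isomorphism on the higher Bott-periodic groups $\pi_{2j}=K_0(D)$. Once the map of representing spectra is known to be a stable equivalence, the natural isomorphism $\hE^*_D(X)\cong gl_1(KU^D)^*(X)$ follows by representability, since the connective spectrum built from $\Ggr_D$ represents $\hE^*_D$ by construction; for $X$ a finite CW-complex there are no $\lim^1$ obstructions and naturality in $X$ is immediate.
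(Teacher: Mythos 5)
Your construction of the action and of the induced map of commutative $\mI$-monoids is sound: post-composition $\varphi\mapsto\alpha\circ\varphi$ does define an action of $\Ggr_D(\mathbf{n})$ on $KU^D_n$ through the ring structure, and the assignment $\alpha\mapsto\alpha\circ\eta_n$ is compatible with the $\mI$-structure maps (for instance the image of $\alpha\circ\eta_n$ under $\Omega^nKU^D_n\to\Omega^{n+1}KU^D_{n+1}$ is $((\alpha\circ\eta_n)\otimes\eta_1)\circ\Delta=(\alpha\otimes\mathrm{id})\circ\eta_{n+1}$), so this part agrees with what the cited construction must be. The genuine gap is the step you yourself flag as ``the main obstacle'' and then never close: you must show that this specific map of connective spectra is a $\pi_*$-isomorphism, and nothing in your argument does so. You cannot discharge it by invoking ``the computation of $\pi_*\Autgr(\Cliff{1}\otimes D\otimes\K)$ from the earlier Dadarlat--Pennig work'': the earlier papers \cite{DP1,DP2} compute the \emph{ungraded} groups, $\pi_0(\Aut(D\otimes\K))\cong K_0(D)^\times_+$ and $\pi_i(\Aut(D\otimes\K))\cong K_i(D)$ for $i\geq 1$; the computation of $\pi_*\Autgr(\Cliff{1}\otimes D\otimes\K)$, and the fact that evaluation on the unit matches it against $\pi_*\,gl_1(KU^D)$, is precisely the content of the theorem being proved. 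Note that the present paper does not prove this statement either --- it imports it verbatim from \cite[Thm.~5.4]{DP4} --- so a blind proof has to supply exactly the work you leave open. An abstract coincidence of homotopy groups would not suffice in any case: one needs the map $\alpha\mapsto\alpha\circ\eta_n$ itself to induce the isomorphisms, which is the actual mathematical substance here.

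A secondary error sits in the one structural claim you do make toward that step. Since $\Cliff{1}\cong\C\oplus\C$ is commutative and $D\otimes\K$ is trivially graded, the element $e_1\otimes 1$ is central in (the multiplier algebra of) $\Cliff{1}\otimes D\otimes\K$, so ordinary conjugation $\mathrm{Ad}(e_1\otimes 1)$ is the \emph{identity} automorphism; it realizes nothing. The automorphism that produces the missing sign is the grading automorphism $\gamma_{\Cliff{1}}\otimes\mathrm{id}$ (equivalently, conjugation by $e_1$ taken with the Koszul sign $a\mapsto(-1)^{\deg a}e_1ae_1$), and one must then actually prove that it acts by $-1$ on $KK(\Sgr,\,\cdot\,)\cong K_0(D)$: in the spectral picture it replaces an odd operator $e_1\otimes T$ by $e_1\otimes(-T)$, i.e.\ reverses orientation. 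That verification, together with the identification of $\pi_0\Autgr(\Cliff{1}\otimes D\otimes\K)$ with all of $K_0(D)^\times$ and of the higher homotopy groups, and the check that your explicit map realizes these identifications, is what a complete proof requires and what your proposal leaves open.
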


\subsection{Splitting results}\label{sec:three}
The main result in this section is Prop.~\ref{prop:basic}, in which we show that each of the cohomology theories defined above splits off a low-degree ordinary cohomology group. It suffices to treat the case where $D$ is stably finite, because a key result of \cite{DP4} establishes the isomorphism
\[
	E^1_{D\otimes \OO_\infty}(X)\cong \hE^1_{D}(X)\ .
\]
We have revisited  in \cite{DP4} the following result from \cite{paper:DonovanKaroubi}, \cite{paper:Parker-Brauer}.
\begin{proposition} \label{prop:Z2}
For any finite CW-complex $X$, $\hE_\C^1(X)\cong H^1(X,\Z/2) \times_{_{tw}}  H^3(X,\Z)$ with group structure:
\[
	(w, \tau) \cdot (w',\tau') = (w + w', \tau + \tau' + \beta(w \cup w'))
\]
for $w,w' \in H^1(X,\Z/2)$ and $\tau,\tau' \in H^3(X,\Z)$, where $\beta \colon H^2(X,\Z/2) \to H^3(X,\Z)$ is the Bockstein homomorphism.
\end{proposition}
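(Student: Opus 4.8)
The plan is to realise $\hE^1_\C(X)$ as the set of homotopy classes $[X,B\Autgr(\Cliff{1}\otimes\K)]$ furnished by the representing space of $\hE^*_\C$, to determine the two lowest homotopy groups of this space, and then to extract the group law from the graded tensor product. First I would analyse $\Autgr(\Cliff{1}\otimes\K)$ at the level of components and of the identity component. The component group is $\Z/2$, generated by the grading automorphism $\gamma\colon e_1\mapsto -e_1$ of $\Cliff{1}$ (extended by the identity on $\K$), which is not connected to the identity. The even part of $\Cliff{1}\otimes\K$ is a copy of $\K$, so the graded-inner automorphisms supply a copy of $\mathrm{PU}(\mathcal{H})\simeq K(\Z,2)$ inside the identity component, carrying the usual Dixmier--Douady class. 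After passing to classifying spaces this produces, in the range that governs the statement, a two-stage Postnikov picture with $\pi_1=\Z/2$ and $\pi_3=\Z$; since $\gamma$ fixes all even elements it commutes with the graded-inner automorphisms, so there is no twisting of the local system and the underlying set is
\begin{equation*}
\hE^1_\C(X)\;\cong\;H^1(X;\Z/2)\times H^3(X;\Z),
\end{equation*}
where $w\in H^1(X;\Z/2)$ records the grading type and $\tau\in H^3(X;\Z)$ the Dixmier--Douady class of a representative of pure grading type.

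The substance of the proposition is the multiplication, which is \emph{not} the direct-product one. The abelian group structure on $\hE^1_\C(X)$ is the one induced by the fibrewise graded tensor product of $\Cliff{1}\otimes\K$-bundles, equivalently by the $E_\infty$-multiplication on $B\Autgr(\Cliff{1}\otimes\K)$ coming from its commutative $\mI$-monoid structure. On the $H^1$- and the $H^3$-summand this operation is plainly additive, so the only possible deviation from the direct sum is a biadditive correction
\begin{equation*}
H^1(X;\Z/2)\times H^1(X;\Z/2)\longrightarrow H^3(X;\Z)
\end{equation*}
which measures the Dixmier--Douady class of the graded tensor product of two bundles of pure grading type $w$ and $w'$.

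The main step is to compute this correction, which I would do universally. It is a single class in $H^3\big(K(\Z/2,1)\times K(\Z/2,1);\Z\big)$, and a Künneth calculation shows that this group is $\Z/2$, generated by $\beta(w\cup w')$ for the two fundamental classes $w,w'$, where $\beta\colon H^2(-;\Z/2)\to H^3(-;\Z)$ is the integral Bockstein. It remains to decide which of the two elements the correction is, and this is where the genuinely graded input enters and where I expect the real work to lie: the nontriviality comes from the graded tensor square $\Cliff{1}\otimes\Cliff{1}\cong\Cliff{2}\cong M_2(\C)$ together with the signs in the graded flip, and one detects it by restricting the universal tensor product to a test space such as $\R P^2\times\R P^2$, on which $\beta(w\cup w')$ is the nonzero element of $H^3\cong\Z/2$. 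Showing the restriction is nonzero identifies the correction with $\beta(w\cup w')$ and yields
\begin{equation*}
(w,\tau)\cdot(w',\tau')=\big(w+w',\ \tau+\tau'+\beta(w\cup w')\big).
\end{equation*}
Because $w\cup w'=w'\cup w$ for degree-one classes with $\Z/2$ coefficients, this law is commutative, as required for the abelian group $\hE^1_\C(X)$; checking associativity and that the bijection above is a homomorphism is then routine, completing the proof.
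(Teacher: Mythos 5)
The paper itself offers no proof of Proposition~\ref{prop:Z2}: it is quoted from Donovan--Karoubi and Parker, as revisited in \cite{DP4}, so your proposal has to stand on its own. Its architecture --- pin down the homotopy type of $B\Autgr(\Cliff{1}\otimes \K)$, then identify the universal deviation of the tensor-product multiplication from the direct sum --- is the right one (it is essentially the route of the cited papers), but two of its steps are genuine gaps. The first is the homotopy type. You show that $\pi_0(\Autgr(\Cliff{1}\otimes\K))=\Z/2$ and that the identity component \emph{contains} a copy of $PU(\mathcal{H})\simeq K(\Z,2)$, and you then invoke ``a two-stage Postnikov picture in the range that governs the statement''. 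No finite range governs the statement: the claim is that $\hE^1_\C(X)\cong H^1(X,\Z/2)\times H^3(X,\Z)$ as sets for finite complexes of \emph{arbitrary} dimension, so you need the identity component to be \emph{exactly} $PU(\mathcal{H})$, with no homotopy above degree $2$. That this cannot be finessed is shown by the paper itself: for $D=\ZZ$ the group $\Autgr(\Cliff{1}\otimes\ZZ\otimes\K)$ has the very same low-dimensional data ($\pi_0=\Z/2$, identity component containing a graded-inner $PU(\mathcal{H})$, and classifying space with $\pi_1=\Z/2$, $\pi_2=0$, $\pi_3=\Z$, by \eqref{homotopy} and Theorem~\ref{thm:Z2}), so your paragraph applies to it verbatim, yet the conclusion is false there: $\hE^1_\ZZ(X)$ carries the extra summand $h(X)\cong k^5(X)$ (Proposition~\ref{prop:basic}(b)). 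The missing argument is fortunately easy in the case $D=\C$: ungraded, $\Cliff{1}\otimes\K\cong\K\oplus\K$ with the grading automorphism acting as the flip of the two summands; a graded automorphism either preserves the two minimal ideals, and commutation with the flip then forces it to be of the form $(\alpha,\alpha)$ with $\alpha\in\Aut(\K)\cong PU(\mathcal{H})$, or it is the flip composed with such an automorphism. Hence $\Autgr(\Cliff{1}\otimes\K)\cong PU(\mathcal{H})\times\Z/2$ as topological groups; this equality (not just containment) is what your argument needs, and it is precisely the point where $\C$ differs from the infinite-dimensional strongly self-absorbing algebras.

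The second gap is the twist itself. Your universal reduction is sound: naturality plus K\"unneth shows the product must be $(w,\tau)(w',\tau')=(w+w',\tau+\tau'+\varepsilon\,\beta(w\cup w'))$ for a single universal $\varepsilon\in\{0,1\}$ (strictly, you should run the K\"unneth argument on all of $\bigl(K(\Z/2,1)\times K(\Z,3)\bigr)^{\times 2}$, not just on $K(\Z/2,1)\times K(\Z/2,1)$, but in degree three this indeed yields only $\tau$, $\tau'$ and $\beta(w\cup w')$, so that part is fine). Moreover $\R P^2\times \R P^2$ with the two tautological line bundles $L,L'$ is a valid detector: the pulled-back classes are $(w,0)$ and $(w',0)$ (their $H^3$-components vanish for dimension reasons on $\R P^2$, and $\delta_0(\Cliff{L}\otimes\K)=w_1(L)$ by Theorem~\ref{thm:Z2}), while $\beta(w\cup w')\neq 0$ there. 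But the decisive step --- showing $\varepsilon=1$, i.e.\ that $\Cliff{L}\hat{\otimes}\Cliff{L'}\cong\Cliff{L\oplus L'}$, a bundle of graded $M_2(\C)$'s, has nonzero Dixmier--Douady class, namely $W_3(L\oplus L')=\beta\bigl(w_1(L)\cup w_1(L')\bigr)$ --- is exactly the content of the Donovan--Karoubi theorem, and your proposal defers it (``where I expect the real work to lie'') rather than carrying it out, say by the $\mathrm{Spin}^c$-obstruction argument or by an explicit cocycle computation keeping track of the signs in the graded flip. As written, the proposal establishes only that $\hE^1_\C(X)$ is one of two possible extensions of $H^1(X,\Z/2)$ by $H^3(X,\Z)$; identifying which one is the heart of the proposition and is missing.
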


Let us recall the following two theorems  from \cite{DP4}:
\begin{theorem}\label{thm:Z2}
    {Let $X$ be a finite CW-complex and let $D$ be a stable finite strongly self-absorbing $C^*$-algebra satisfying the UCT. The groups $E^1_{D}(X)$ and $\hE^1_{D}(X)$ fit into a short exact sequence}
    	\[
	\begin{tikzcd}\label{short}
		0 \ar[r] & E^1_{D}(X) \ar[r] & \hE^1_{D}(X) \ar[r ,"\delta_0"] & H^1(X, \Z/2) \ar[r] & 0.
	\end{tikzcd}
	\]
	If $L$ is a real line vector bundle on $X$ with associated Clifford bundle $\Cliff{L}$, then $\delta_0(\Cliff{L}\otimes D \otimes \mathcal{K})=w_1(L)$, {where $w_1(L)$ is the first Stiefel-Whitney class of $L$.}
\end{theorem}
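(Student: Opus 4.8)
The plan is to realise $\delta_0$ as the natural transformation induced by a ``sign'' homomorphism recording how a graded automorphism acts on the Clifford generator, to deduce the three terms from a fibre sequence of spectra, and to verify the two outer exactness statements by hand — one geometrically, one via even parts. Write $B = D\otimes\K$, trivially graded; since $D$ is strongly self-absorbing, $B$ is simple and stable, so the centre of its multiplier algebra is $\C$. For $\phi\in\Autgr(\Cliff{1}\otimes B)$, the element $e_1\otimes 1$ is odd, self-adjoint and central in the ungraded multiplier algebra of $\Cliff{1}\otimes B$; its image $\phi(e_1\otimes 1)$ has the same properties, and the only odd central self-adjoint elements proportional to $e_1\otimes 1$ are $\pm e_1\otimes 1$. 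Hence $\phi(e_1\otimes 1)=\mathrm{sgn}(\phi)\,e_1\otimes 1$ defines a continuous homomorphism $\mathrm{sgn}\colon \Autgr(\Cliff{1}\otimes B)\to\Z/2$ with kernel $\Aut(B)$ (surjective, e.g.\ via the grading automorphism). I would extend this level-wise to a map of commutative $\mI$-monoids $\Ggr_D\to\underline{\Z/2}$, using the corresponding spinor sign on $\Autgr(\Cliff{n}\otimes B^{\otimes n})$ and checking that signs multiply under the graded tensor product, so that $B(\mathrm{sgn})$ is an infinite-loop map, $\delta_0\colon \hE^1_D(X)\to H^1(X,\Z/2)$ is a group homomorphism, and the associated spectra sit in a fibre sequence whose long exact sequence contains the asserted three terms.

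Injectivity of $E^1_D(X)\to\hE^1_D(X)$ I would prove directly. This map sends a $D\otimes\K$-bundle $E$ to the graded bundle $\Cliff{1}\otimes E$; as $B$ is trivially graded, the even part of $\Cliff{1}\otimes B$ is $1\otimes B\cong B$, so the even-part sub-bundle of $\Cliff{1}\otimes E$ is $E$ itself. A graded isomorphism $\Cliff{1}\otimes E\cong\Cliff{1}\otimes E'$ restricts to an isomorphism of even parts, whence $E\cong E'$ and the map has trivial kernel (equivalently, the connecting map $H^0(X,\Z/2)\to E^1_D(X)$ vanishes). One must resist concluding that ``take the even part'' splits the sequence: it is not a homomorphism, since the group law is the graded tensor product and $(\mathcal B\mathbin{\hat\otimes}\mathcal B')^0=\mathcal B^0\otimes\mathcal B'^0\oplus\mathcal B^1\otimes\mathcal B'^1$ — this is precisely why the extension may be non-split, consistent with Prop.~\ref{prop:Z2}. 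Middle exactness is then the statement that $\delta_0(\mathcal B)=0$ forces the structure group of $\mathcal B$ to reduce to $\ker(\mathrm{sgn})=\Aut(B)$, i.e.\ $\mathcal B\cong\Cliff{1}\otimes E$ for an ungraded bundle $E$; this is exactness of $\pi_0$ of the fibration $B\Aut(B)\to B\Autgr(\Cliff{1}\otimes B)\to B\Z/2$.

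Surjectivity and the identification of $\delta_0$ I would settle together via the Clifford-bundle construction. Given a real line bundle $L\to X$ with $O(1)=\Z/2$ transition functions $g_{ij}$ representing $w_1(L)$, the Clifford bundle $\Cliff{L}$ has transition functions acting on the generator by $\pm 1$, so $\Cliff{L}\otimes D\otimes\K$ admits an $\Autgr(\Cliff{1}\otimes D\otimes\K)$-cocycle with $\mathrm{sgn}=g_{ij}$ and $\alpha=\mathrm{id}$. Therefore $\delta_0(\Cliff{L}\otimes D\otimes\K)=[g_{ij}]=w_1(L)$, which is the second assertion. Because $w_1$ is a bijection from isomorphism classes of real line bundles onto $H^1(X,\Z/2)$, this construction is a set-theoretic section of $\delta_0$, proving surjectivity (and hence that the next connecting map $H^1(X,\Z/2)\to E^2_D(X)$ vanishes). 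Combining this with the previous paragraph yields the short exact sequence.

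The even-part injectivity and the $w_1$ computation are essentially formal. The genuine obstacle is the first paragraph: verifying that the level-wise sign maps assemble into a map of commutative $\mI$-monoids $\Ggr_D\to\underline{\Z/2}$ — equivalently, that the cofibre of the map of spectra representing $E^*_D$ and $\hE^*_D$ is exactly $H\Z/2$ with no higher homotopy. This is what makes $\delta_0$ a natural homomorphism and pins the target down as ordinary mod-$2$ cohomology, and it requires controlling $\Autgr(\Cliff{n}\otimes(D\otimes\K)^{\otimes n})$ and the behaviour of the spinor sign under the graded tensor product for all $n$. This is exactly the point at which the infinite-loop-space and $\Gamma$-space machinery recalled earlier (and developed in \cite{DP2}, \cite{DP4}) must be brought to bear.
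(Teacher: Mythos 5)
You cannot really be compared against an in-paper proof here, because the paper does not prove Theorem~\ref{thm:Z2}: it is explicitly recalled from \cite{DP4}. The only place the present paper exposes the underlying mechanism is in the proof of Proposition~\ref{prop:basic}, where it cites \cite[Lem.~6.3]{DP4} for a commutative diagram of commutative $\mI$-monoids $\G_D(\mathbf{n}) \to \Ggr_D(\mathbf{n}) \to \Z/2$ --- and that is precisely the structure your first paragraph sets out to build. So your route (a levelwise sign homomorphism assembled into a map of $\mI$-monoids inducing $\delta_0$; middle exactness from the level-one fibration $B\Aut(D\otimes\K) \to B\Autgr(\Cliff{1}\otimes D\otimes\K) \to B\Z/2$; injectivity from even parts; surjectivity and the $w_1$-formula from Clifford bundles of real line bundles) is, as far as the citation trail reveals, essentially the approach of the cited source rather than a different one. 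Your level-one analysis is correct, including $\ker(\mathrm{sgn}) = \{\id{\Cliff{1}}\otimes\alpha\} \cong \Aut(D\otimes\K)$, and the even-part injectivity and the computation $\delta_0(\Cliff{L}\otimes D\otimes\K) = w_1(L)$ (which gives surjectivity, since $w_1$ exhausts $H^1(X,\Z/2)$) are exactly the right hand-checks; note they are genuinely needed, since pointed-set exactness from the fibration alone does not rule out a nontrivial connecting map $H^0(X,\Z/2)\to E^1_D(X)$.

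Three wrinkles in the part you defer. First, oddness, self-adjointness and centrality pin down $\phi(e_1\otimes 1)$ only up to a real scalar; you also need that $\phi$ preserves $(e_1\otimes 1)^2$ (or is isometric) to get $\pm 1$. Second, for even $n$ your level-$\mathbf{n}$ ``spinor sign'' cannot be defined by the odd-central-element argument: the volume element $\omega_n \in \Cliff{n}$ is then even and not central (it anticommutes with each generator); the correct statement is that $\omega_n \otimes 1$ implements the grading, so for graded $\phi$ the element $\phi(\omega_n\otimes 1)(\omega_n\otimes 1)^*$ is central in the multiplier algebra, whence $\phi(\omega_n\otimes 1) = \pm\,\omega_n\otimes 1$. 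Third, for $n \geq 2$ the levelwise kernel of the sign is strictly larger than the image of $\G_D(\mathbf{n})$ --- at $n=2$, $\mathrm{Ad}(\mathrm{diag}(u,1))$ fixes $\omega_2\otimes 1$ but is not of the form $\id{\Cliff{2}}\otimes\alpha$ --- so your claim that the spectra sit in a fibre sequence with fibre the $E_D$-spectrum needs an extra homotopical step (the inclusion of $\G_D(\mathbf{n})$ into this kernel is an equivalence, ultimately because unitary groups of multiplier algebras of stable $C^*$-algebras are contractible). Fortunately your direct verifications of injectivity, middle exactness and surjectivity make that fibre-sequence claim dispensable: all the $\mI$-monoid machinery must deliver is that $\mathrm{sgn}$ is a map of commutative $\mI$-monoids, so that $\delta_0$ and $E^1_D(X)\to\hE^1_D(X)$ are natural homomorphisms. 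That assembly is the genuine content, you are right to flag it, and it is exactly what \cite{DP4} supplies; since the paper under review also simply cites \cite{DP4} at this point, your proposal is as complete as the paper's own treatment of this theorem.
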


\begin{theorem}\label{thm:basic}
{Let $X$ be a finite CW-complex and let $D$ be a stably finite strongly self-absorbing $C^*$-algebra satisfying the UCT.} Then there is an isomorphism of groups
\[\hE^1_{D}(X)\cong   H^1(X;\Z/2) \times_{_{tw}} E^1_{D}(X)\]
with multiplication on the direct product  $H^1(X;\Z/2) \times E^1_{D}(X)$ given by
\[
	(w, \tau) \cdot (w',\tau') = (w + w', \tau + \tau' + j_P\circ\beta(w \cup w'))
\]
for $w,w' \in H^1(X,\Z/2)$ and $\tau,\tau' \in E^1_{D}(X)$, where $j_P \colon   E^1_{\C}(X)\to E^1_{D}(X)$ is
the map induced by the unital $*$-homomorphism $\C\to D$ and we identify
$E^1_{\C}(X)\cong H^3(X,\Z)$.

\end{theorem}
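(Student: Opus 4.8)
The plan is to deduce the twisted product decomposition directly from the short exact sequence in Theorem~\ref{thm:Z2} by exhibiting a set-theoretic splitting and then identifying the resulting cocycle. Theorem~\ref{thm:Z2} gives the short exact sequence of abelian groups
\[
	0 \longrightarrow E^1_{D}(X) \longrightarrow \hE^1_{D}(X) \xrightarrow{\ \delta_0\ } H^1(X,\Z/2) \longrightarrow 0,
\]
and the content of the present theorem is precisely a computation of the extension class of this sequence. First I would construct a section $s \colon H^1(X,\Z/2) \to \hE^1_{D}(X)$ of $\delta_0$ that is natural but \emph{not} a homomorphism. The natural candidate comes from the geometric description attached to $\delta_0$: for a class $w \in H^1(X,\Z/2)$, choose a real line bundle $L$ with $w_1(L) = w$, and set $s(w) = [\Cliff{L} \otimes D \otimes \K]$, the class of the associated Clifford bundle. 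By the last sentence of Theorem~\ref{thm:Z2} this satisfies $\delta_0(s(w)) = w$, so $s$ is indeed a section.

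Next I would compute the defect of $s$ from being multiplicative, i.e.\ the element
\[
	c(w,w') = s(w) \cdot s(w') \cdot s(w + w')^{-1} \in \ker \delta_0 = E^1_{D}(X).
\]
Because $s$ is built from Clifford bundles of line bundles, $c(w,w')$ should reduce to the analogous computation for the case $D = \C$ pushed forward along $j_P$. Concretely, the Clifford algebra of a sum of line bundles factors, $\Cliff{L \oplus L'} \cong \Cliff{L} \otimes \Cliff{L'}$, so the product $s(w)\cdot s(w')$ differs from $s(w+w')$ exactly by the failure of this Clifford tensor identity to be an identity of \emph{$D$-stabilised} bundles. I expect this defect to be exactly the image under $j_P$ of the corresponding universal defect for $\C$, which by Proposition~\ref{prop:Z2} is $\beta(w \cup w') \in H^3(X,\Z) \cong E^1_\C(X)$. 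Hence $c(w,w') = j_P(\beta(w \cup w'))$, giving the stated multiplication
\[
	(w,\tau)\cdot(w',\tau') = (w + w', \tau + \tau' + j_P\circ\beta(w\cup w')),
\]
and the map $(w,\tau) \mapsto s(w)\cdot \iota(\tau)$ (with $\iota \colon E^1_D(X) \hookrightarrow \hE^1_D(X)$) is then checked to be the required group isomorphism.

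The main obstacle will be the precise identification of the $2$-cocycle $c(w,w')$ with $j_P\circ\beta(w\cup w')$, rather than merely asserting it lands in $E^1_D(X)$. The cleanest route is \emph{naturality}: Proposition~\ref{prop:Z2} already establishes the $D=\C$ case, where the twisting term is $\beta(w\cup w')$, and the unital $*$-homomorphism $\C \to D$ induces a morphism of the two short exact sequences (of Theorem~\ref{thm:Z2}) that is the identity on the $H^1(X,\Z/2)$ quotient and is $j_P$ on the kernel. Since the section $s$ is defined by the same Clifford-bundle formula in both cases and is compatible with $j_P$, the cocycle for $D$ is the $j_P$-image of the cocycle for $\C$; this forces $c = j_P\circ\beta(w\cup w')$ without any fresh geometric calculation. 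I would therefore organise the argument so that the bulk of the work reduces, via this naturality square, to the already-established $\C$-case of Proposition~\ref{prop:Z2}, leaving only the verification that $s$ and the connecting maps are genuinely natural in $D$ along $\C \to D$.
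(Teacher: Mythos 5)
Your proposal is correct and is essentially the paper's own argument: the paper likewise treats the sequence of Theorem~\ref{thm:Z2} as an extension problem, pushes the known $\C$-case (Proposition~\ref{prop:Z2}) forward along the morphism of short exact sequences induced by $\C\to D$ (the identity on $H^1(X,\Z/2)$ and $j_P$ on the kernels), and invokes the normalized-cocycle fact of Remark~\ref{remark:review} to identify the twisting as $j_P\circ\beta(w\cup w')$. Your Clifford-bundle section is compatible with this naturality argument, which, as you note yourself, makes any fresh geometric computation of the cocycle unnecessary.
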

\begin{remark}\label{remark:review} Just like in \cite{DP4}, we use  here the following basic fact, \cite[p.93]{Brown:book-cohomology}.
Suppose that
\[
	\begin{tikzcd}
		0 \ar[r] & A \ar[r,"i"] & E \ar[r,"\pi"] & G \ar[r] & 0
	\end{tikzcd}
	\]
	is an extension of abelian groups and $\sigma:G \to E$ is a map such that $\sigma(0)=0$ and $\pi\circ \sigma=\mathrm{id}_E$. Let $c:G\times G \to A$ be the normalized $2$-cocycle defined by
	$i(c(g,h))=\sigma(gh)\sigma(h)^{-1}\sigma^{-1}(g)$, $g,h\in G$. Then the group $E$ is isomorphic to $G\times A$ endowed with the group law:
	\[(g,a)(g',a')=(g+g',a+a'+c(g,g')).\]
\end{remark}

{The coefficients of the cohomology theory $E_D^*(X)$ are given by the homotopy groups of
$\Aut(D\otimes \K)$ computed in \cite[Thm.2.18]{DP1}:
\begin{equation}\label{homotopy}
E_D^{-i}(*)\cong \pi_i(\Aut(D\otimes \K))=
\begin{cases}
K_0(D)^{\times}_{+},\,\,\text{if}\,\, i=0\\
K_i(D),\,\,\text{if}\,\, i\geq 1.\\
\end{cases}
\end{equation}
If $D$ satisfies the UCT, then $K_{i}(D)=0$ if $i$ is odd and
in particular if $P\neq \emptyset$ is a set of prime numbers, then
\begin{equation}\label{homotopy1}
E_{M_P}^{-i}(*)=
\begin{cases}
(\Z_{P})^{\times}_{+},\,\,\text{if}\,\, i=0\\
\Z_P,\,\,\text{if}\,\, i=2k>0,\\
0, \,\,\text{if}\,\, i=2k+1.
\end{cases}
\end{equation}
}
Let $P$ be a set of prime numbers and let $\mathcal{A} \to X$ be a locally trivial $C^*$-algebra bundle with fibre $M_P\otimes \K$. Let $(U_i)_{i \in I}$ be a trivialising open cover of $X$ for $\mathcal{A}$ and denote the transition maps by $\varphi_{ij} \colon U_i \cap U_j \to \Aut(M_P \otimes \K)$. If we apply the $K_0$-functor to $\varphi_{ij}$ we obtain a $1$-cocycle
\[
	K_0(\varphi_{ij}) \colon U_i \cap U_j \to \Aut(K_0(M_P \otimes \K)) \cong GL_1(\Z_P)\ ,
\]
which factors through the group of positive invertible elements $(\Z_P)^\times_+ \subset GL_1(\Z_P)$. The cocycle only depends on the isomorphism class of $\mathcal{A}$ and is compatible with tensor products, which gives us a group homomorphism (see also \cite[Prop.~2.3]{DP3})
\[
	\delta_0 \colon E^1_{M_P}(X) \to H^1(X, (\Z_P)_+^\times)\ .
\]

\begin{proposition}\label{thm:MP}
	Let $X$ be a finite CW-complex and let $P\neq \emptyset$ be a set of prime numbers. The group $E^1_{M_P}(X)$ fits into a short exact sequence
	\[
	\begin{tikzcd}
		0 \ar[r] & \bar{E}^1_{M_P}(X) \ar[r] & E^1_{M_P}(X) \ar[r,"\delta_0"] & H^1(X, (\Z_P)^\times_+) \ar[r] & 0
	\end{tikzcd}
	\]
	and this sequence splits (not naturally). In particular, there is a non-natural isomorphism
	\[
		E^1_{M_P}(X) \cong H^1(X, (\Z_P)^\times_+) \oplus \bar{E}^1_{M_P}(X)\ .
	\]
\end{proposition}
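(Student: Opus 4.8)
The plan is to establish the short exact sequence first and then argue that it splits by producing a set-theoretic section that is in fact a group homomorphism on a suitable subgroup, or more directly by invoking an appropriate lifting. The map $\delta_0 \colon E^1_{M_P}(X) \to H^1(X,(\Z_P)^\times_+)$ has already been constructed above, so the first task is to identify its kernel with $\bar E^1_{M_P}(X)$ and to prove surjectivity.

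First I would identify the kernel. The theory $\bar E^*_{M_P}$ is associated to the $\mI$-monoid $\bar\G_{M_P}(\mathbf{n}) = \Aut_0((M_P\otimes\K)^{\otimes n})$ built from the identity components, whereas $E^*_{M_P}$ comes from $\G_{M_P}(\mathbf{n}) = \Aut((M_P\otimes\K)^{\otimes n})$. Passing to classifying spaces, a class in $E^1_{M_P}(X)$ lies in the kernel of $\delta_0$ precisely when its induced $K_0$-cocycle is trivial, i.e.\ when the structure group can be reduced to the identity component $\Aut_0$. This should give a short exact sequence of $\mI$-monoids (or rather of the associated infinite loop spaces) $\Aut_0 \to \Aut \to \pi_0\Aut$, where $\pi_0\Aut(M_P\otimes\K) \cong (\Z_P)^\times_+$ by the homotopy computation \eqref{homotopy}. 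The long exact sequence in the associated cohomology theories, together with the fact that $\pi_0$ contributes only in degree matching $H^1$, yields the displayed four-term sequence with $\bar E^1_{M_P}(X)$ as kernel and $H^1(X,(\Z_P)^\times_+)$ as cokernel. Surjectivity of $\delta_0$ follows because any cohomology class in $H^1(X,(\Z_P)^\times_+)$ is realised by a bundle whose transition functions are scalar automorphisms implementing multiplication by units of $\Z_P$, which lift to genuine $M_P\otimes\K$-bundle automorphisms.

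The remaining point is the splitting. Here I would exhibit a homomorphic section of $\delta_0$. The natural candidate comes from the ring structure of $M_P$: an element of $(\Z_P)^\times_+$ is a product of primes (and their inverses), and each such generator is realised by a self-absorbing embedding $M_p \to M_P$, which produces a canonical $*$-automorphism of $M_P\otimes\K$ inducing the corresponding multiplication on $K_0$. Assembling these over a cocycle representing a class in $H^1(X,(\Z_P)^\times_+)$ gives a bundle whose isomorphism class depends only on the cohomology class and is additive, hence a section $s \colon H^1(X,(\Z_P)^\times_+) \to E^1_{M_P}(X)$ with $\delta_0 \circ s = \mathrm{id}$. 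Because both groups are abelian, any such splitting of the surjection $\delta_0$ splits the sequence, giving the direct sum decomposition. Exactly as in Remark~\ref{remark:review}, the splitting is non-natural because the choice of realising automorphisms for the units is not canonical.

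The hard part will be constructing the section and verifying it is genuinely a group homomorphism rather than merely a set-theoretic splitting: one must check that the bundle realising a sum of classes in $H^1(X,(\Z_P)^\times_+)$ is isomorphic, as an $M_P\otimes\K$-bundle under fibrewise tensor product, to the tensor product of the bundles realising each summand, and that this is independent of the cocycle representatives. Alternatively, if producing a homomorphic section turns out to be delicate, I would instead argue that the cokernel $H^1(X,(\Z_P)^\times_+)$ is a free abelian group (since $(\Z_P)^\times_+$ is a free abelian group on the primes in $P$, so $H^1$ with these coefficients is free), which forces the short exact sequence of abelian groups to split automatically; this is the cleaner route and sidesteps the explicit geometric construction entirely.
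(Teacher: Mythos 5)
Your proposal is correct and is essentially the paper's own argument: the paper likewise starts from the extension $1 \to \Aut_0(M_P\otimes\K) \to \Aut(M_P\otimes\K) \to K_0(M_P)^\times_+ \cong (\Z_P)^\times_+ \to 1$, splits it at the group level by commuting automorphisms $\alpha_p$ realizing each prime $p\in P$ on $K_0$ via $M_p \cong M_p\otimes M_p(\C)$ (your ``self-absorbing embedding'' construction, which is also what makes $\delta_0$ surjective), and deduces the short exact sequence together with a set-theoretic splitting from the resulting homotopy equivalence $B((\Z_P)^\times_+)\times B\Aut_0(M_P\otimes\K)\simeq B\Aut(M_P\otimes\K)$. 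For the group-theoretic splitting the paper does exactly what you call the ``cleaner route'': it never constructs a homomorphic section of $\delta_0$ (which, as you suspect, is delicate, since the group law on $E^1_{M_P}(X)$ comes from the tensor product rather than composition in $\Aut$), but instead observes that $H^1(X,(\Z_P)^\times_+)\cong \mathrm{Hom}(H_1(X,\Z),(\Z_P)^\times_+)$ is free abelian, so the extension splits automatically.
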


\begin{proof}  By ~\eqref{homotopy1}, $\pi_0(\Aut(M_P\otimes \K))\cong K_0(M_P)^\times_{+}\cong (\Z_P)_+^\times$.
By \cite[Cor.2.19]{DP2}, there is an exact sequence
\[1\to\Aut_0(M_P\otimes \K) \to \Aut(M_P\otimes \K) \to K_0(M_P)^\times_{+}\to1.\]
We will first show that this sequence splits.
	  We will do so by constructing a homomorphism $\gamma:(\Z_P)_+^\times \to \Aut(M_P \otimes \K)$ that lifts $\Aut(M_P \otimes \K) \to \pi_0(\Aut(M_P \otimes \K))$. Note that
	\begin{equation} \label{eqn:primefactor}
		(\Z_P)_+^\times \cong \bigoplus_{p \in P} \Z
	\end{equation}
	by the prime factor decomposition. Thus, we need to find for each $p \in P$ an automorphism $\alpha_p \in \Aut(M_P \otimes \K)$ such that all of them commute and $[\alpha_p(1 \otimes e)] \in K_0(M_P)$ corresponds to $p$ under the isomorphism $K_0(M_P) \cong \Z_P$ induced by the trace. Fix $p$. We define $\bar{\alpha}_p$ as follows
	\[
		\begin{tikzcd}[column sep=1.5cm]
			M_p \otimes \K \ar[r,"\varphi_1 \otimes \id{\K}"] & M_p \otimes M_p(\C) \otimes \K \ar[r,"\id{M_p} \otimes \varphi_2"] & M_p \otimes \K
		\end{tikzcd}
	\]
	where $\varphi_1 \colon M_p \to M_p \otimes M_p(\C)$ and $\varphi_2 \colon M_p(\C) \otimes \K \to \K$ are isomorphisms. If $\tau$ is the trace on the finite rank projections of $M_p \otimes \K$ with $\tau(1 \otimes e) = 1$, then  $\tau(\bar{\alpha}_p(1 \otimes e)) = p$. We may view $M_P \otimes \K$ as the tensor product over all $M_p \otimes \K$ for all $p \in P$. In case $P$ is an infinite set we choose
	\[
		(M_{p_1} \otimes \K) \otimes \dots \otimes (M_{p_i} \otimes \K) \to (M_{p_1} \otimes \K) \otimes \dots \otimes (M_{p_i} \otimes \K) \otimes (M_{p_{i+1}} \otimes \K)
	\]
	to be given by $a \mapsto a \otimes (1 \otimes e)$. The automorphism $\alpha_p \in \Aut(M_P \otimes \K)$ is defined to act via $\bar{\alpha}_p$ on the appropriate tensor factor and the identity on the rest. These clearly commute.

	Set $D=M_P$ and
	recall that $\bar{G}_{D}(\mathbf{n})=\Aut_0((D\otimes \mathcal{K})^{\otimes n}).$
	
	There are maps of $\mI$-commutative monoids {\cite[Lem.6.2]{DP4}}:
\begin{equation}\label{eqn:mono} \bar{G}_D(\mathbf{n}) \to G_D(\mathbf{n}) \to K_0(D)^{\times}_{+}
\end{equation}
and a group homomorphism
\[K_0(D)^{\times}_{+} \times \Aut_0(D\otimes \mathcal{K})  \to \Aut((D\otimes \mathcal{K})\otimes (D\otimes \mathcal{K}))\cong \Aut(D\otimes \mathcal{K}),\]
given by $(x,\alpha)\mapsto \gamma(x)\otimes \alpha$.
From our previous discussion, this is a homotopy equivalence. We obtain a homotopy equivalence
\[B(K_0(D)^{\times}_{+}) \times B\Aut_0(D\otimes \mathcal{K})  \to B\Aut(D\otimes \mathcal{K}).\]
In conjunction with \eqref{eqn:mono}, this gives the exact sequence from the statement {(and a split as a sequence of pointed sets)}.
To see that the exact sequence of groups splits, one observes that if $X$ is a finite CW-complex, then
$H^1(X,(Z_P)_{+}^{\times})$ is a free group since it is isomorphic to $\mathrm{Hom}(H_1(X,\Z),(Z_P)_{+}^{\times})\cong ((Z_P)_{+}^{\times})^r$ where $r$ is the rank of $H_1(X,\Z)$.
\end{proof}

\begin{proposition}\label{prop:coeff} Let $P$ be a nonempty set of primes and let $X$ be a finite CW-complex.
The natural maps $\bE_{M_P}^*(X)\to\bE_{M_P}^*(X)\otimes \Z_P$ and $\bE_\mathcal{Z}^*(X)\otimes \Z_P \to  \bE_{M_P}^*(X)\otimes \Z_P$ are isomorphisms of groups. It follows that $\bE_{M_P}^*(X)\cong \bE_\mathcal{Z}^*(X)\otimes \Z_P$.
\end{proposition}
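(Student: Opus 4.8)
The plan is to reduce both statements to a computation on coefficients, exploiting that localisation at $\Z_P$ is exact. First I would record the coefficients. Since $\bar{\G}_D(\mathbf{n})=\Aut_0((D\otimes\K)^{\otimes n})$ only sees the identity component, \eqref{homotopy} gives $\bE_D^{-i}(*)=\pi_i(\Aut_0(D\otimes\K))=K_i(D)$ for $i\ge 1$ and $\bE_D^{-i}(*)=0$ for $i\le 0$. Hence $\bE_{M_P}^{-i}(*)=K_i(M_P)$ is $\Z_P$ for even $i\ge 2$ and $0$ otherwise, while $\bE_\ZZ^{-i}(*)=K_i(\ZZ)$ is $\Z$ for even $i\ge 2$ and $0$ otherwise. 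The unital $*$-homomorphism $\ZZ\to\ZZ\otimes M_P\cong M_P$ induces on $K_0$ the inclusion $\Z\hookrightarrow\Z_P$, $1\mapsto 1$, and by Bott periodicity the same map in every even degree; therefore the induced map $\bE_\ZZ^{-i}(*)\otimes\Z_P\to\bE_{M_P}^{-i}(*)$ is an isomorphism for all $i$.

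Next I would observe that $\Z_P\subset\Q$ is a localisation of $\Z$, hence flat, so $(-)\otimes\Z_P$ is exact. Consequently $X\mapsto\bE_D^*(X)\otimes\Z_P$ is again a cohomology theory on finite CW-complexes: the exactness axiom and the suspension isomorphisms of $\bE_D^*$ are preserved under tensoring with a flat module. This is what gives meaning to the two theories appearing in the second map.

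For the first map $\bE_{M_P}^*(X)\to\bE_{M_P}^*(X)\otimes\Z_P$ I would show that the group $\bE_{M_P}^*(X)$ is already a $\Z_P$-module, whence the canonical map $M\to M\otimes_\Z\Z_P$ is an isomorphism. The class of $\Z_P$-modules --- equivalently, abelian groups on which multiplication by each $p\in P$ is bijective --- is closed under subquotients and extensions by the snake lemma. Running the Atiyah--Hirzebruch spectral sequence $H^s(X;\bE_{M_P}^t(*))\Rightarrow\bE_{M_P}^{s+t}(X)$, which has only finitely many nonzero terms because $X$ is finite, every $E_2$-term $H^s(X;\Z_P\text{-module})$ is a $\Z_P$-module, the differentials are $\Z_P$-linear, and the resulting finite filtration forces $\bE_{M_P}^*(X)$ to be a $\Z_P$-module. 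This proves the first map is an isomorphism.

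For the second map $\bE_\ZZ^*(X)\otimes\Z_P\to\bE_{M_P}^*(X)\otimes\Z_P$, induced by $\ZZ\to M_P$, both sides are cohomology theories by the flatness remark, and by the first paragraph the natural transformation is an isomorphism on all coefficient groups (equivalently, on all spheres). A standard comparison argument --- induction over the skeleta of $X$ via the five lemma applied to the long exact sequences of the pairs $(X_k,X_{k-1})$, or equivalently a comparison of Atiyah--Hirzebruch spectral sequences --- then upgrades this to an isomorphism for every finite CW-complex. Composing the inverse of the first isomorphism with the second yields $\bE_{M_P}^*(X)\cong\bE_\ZZ^*(X)\otimes\Z_P$. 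The main point to watch is the justification that $(-)\otimes\Z_P$ lands in cohomology theories and that the coefficient-level isomorphism propagates by the comparison theorem; the identification $\bE_D^{-i}(*)=K_i(D)$ and the ensuing bookkeeping are routine.
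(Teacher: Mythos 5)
Your proposal is correct and follows the same overall strategy as the paper: read off the coefficients from \eqref{homotopy1} (adjusted to $\bE$, where the degree-zero term vanishes because $\Aut_0$ is connected), use flatness of $\Z_P$ to see that $X\mapsto \bE^*_D(X)\otimes\Z_P$ is again a cohomology theory on finite CW-complexes, and promote a coefficient-level isomorphism to all finite complexes by the standard comparison theorem. The only place you diverge is the first map: the paper disposes of $\bE^*_{M_P}(X)\to\bE^*_{M_P}(X)\otimes\Z_P$ by the very same comparison argument (on coefficients it is $\Z_P\to\Z_P\otimes\Z_P$, an isomorphism), whereas you argue directly via the Atiyah--Hirzebruch spectral sequence that $\bE^*_{M_P}(X)$ is already a $\Z_P$-module. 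Both are valid; the paper's is shorter, while yours yields the slightly stronger intrinsic fact that these groups carry a (necessarily unique) $\Z_P$-module structure. One caution about your wording: the class of $\Z_P$-modules is \emph{not} closed under arbitrary subquotients of abelian groups ($\Z\subset\Q$ is a counterexample), so that parenthetical justification is too strong as stated. Your argument is nevertheless sound, because any homomorphism of abelian groups between $\Z_P$-modules is automatically $\Z_P$-linear (unique $p$-divisibility of the target forces $f(x/p)=f(x)/p$), so the kernels and images of the spectral-sequence differentials, and hence all subquotients actually encountered, are $\Z_P$-submodules; closure of $\Z_P$-modules under extensions then finishes the finite filtration argument exactly as you say.
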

\begin{proof}

Let $D$ be a strongly self-absorbing $C^*$-algebra satisfying the UCT.
 Since $\Z_P$ is flat, it follows that $X \mapsto \bar{E}^*_D(X) \otimes Z_P$ still satisfies all axioms of a generalized cohomology theory on finite CW-complexes. We have natural transformations of cohomology theories:
 \[ \bE_{M_P}^*(X)\to\bE_{M_P}^*(X)\otimes \Z_P,\quad  \bE_\mathcal{Z}^*(X)\otimes \Z_P \to  \bE_{M_P}^*(X)\otimes \Z_P. \]
 These transformations induce isomorphisms of groups on coefficients by \eqref{homotopy1} and therefore they induce isomorphisms of cohomology theories. \end{proof}

Let us recall from \cite{DP1} that the natural map $D \to D \otimes \OO_\infty$ induces an isomorphism
\begin{equation}\label{reduced}
  \bar{E}^*_{D}(X) \stackrel{\cong}\longrightarrow \bar{E}^*_{D\otimes \OO_\infty}(X)
\end{equation}
since one checks that it induces an isomorphism on coefficients in view of \eqref{homotopy}.

{\begin{proposition}\label{prop:split}
  Let $X$ be a finite CW complex. The canonical maps $\hat{\rho} \colon\hE^1_{\C}(X)\to \hE^1_{\ZZ}(X)$
  and ${\rho} \colon E^1_{\C}(X)\to  E^1_{\ZZ}(X)$ induced by the unital $*$-homomorphism $\C\to \ZZ$  split naturally.  Consequently we have natural splittings \begin{equation}\label{h-splitting}
\hE^1_{\ZZ}(X)\cong \hE^1_{\C}(X) \oplus h(X)\quad \text{and} \quad E^1_{\ZZ}(X)\cong E^1_{\C}(X) \oplus h(X).\end{equation}
\end{proposition}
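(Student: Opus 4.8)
The plan is to realise $\rho$ and $\hat\rho$ as the inclusion of the bottom Postnikov layer of the spectra representing $E^*_\ZZ$ and $\hE^*_\ZZ$, and to use the Postnikov truncation in the opposite direction as a natural retraction. Write $\mathcal{E}_D$ and $\hat{\mathcal{E}}_D$ for the connective spectra representing $E^*_D$ and $\hE^*_D$. These are built functorially in $D$ from the commutative $\mI$-monoids $\G_D$ and $\Ggr_D$ of Table~\ref{tab:list_of_coh}: the unital embedding $\C\to\ZZ$ of \eqref{eqn:ssa_uct} satisfies $\ZZ\otimes\C\cong\ZZ$ and hence induces maps of $\mI$-monoids and of spectra, whose effect on degree-$1$ cohomology is exactly $\rho$ and $\hat\rho$.

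First I would record the homotopy groups. For the finite-dimensional algebra $\C$ one has $\Aut(\C\otimes\K)=\Aut(\K)\simeq PU(H)\simeq K(\Z,2)$, so $\mathcal{E}_\C\simeq\Sigma^2 H\Z$ and $E^*_\C$ is ordinary cohomology with $E^1_\C(X)=H^3(X,\Z)$; by Prop.~\ref{prop:Z2} the theory $\hE^*_\C$ is concentrated in the same low range, so $\hat{\mathcal{E}}_\C$ is $2$-truncated with $\pi_0=\Z/2$ and $\pi_2=\Z$ (if it had homotopy in degree $\ge 3$ this would contribute to $\hE^1_\C(S^5)$, which vanishes). This is the decisive asymmetry with $\ZZ$: by \eqref{homotopy} the spectrum $\mathcal{E}_\ZZ$ is $1$-connected with $\pi_{2k}\cong K_{2k}(\ZZ)\cong\Z$ for all $k\ge 1$ and $\pi_{\mathrm{odd}}=0$, while $\hat{\mathcal{E}}_\ZZ$ carries in addition the class $\pi_0=\Z/2$ detected by $\delta_0$ in Thm.~\ref{thm:Z2}.

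Next I would split the maps at the level of spectra. Since $\C\to\ZZ$ is a $KK$-equivalence (Sec.~\ref{sect:two}) it is an isomorphism on $K_2$, so $\rho$ and $\hat\rho$ are isomorphisms on $\pi_2$; and $\hat\rho$ is an isomorphism on $\pi_0=\Z/2$, whose generator is the grading class of $\Cliff{1}$ and is preserved (compatibly with $\delta_0$). Because $\mathcal{E}_\C$ and $\hat{\mathcal{E}}_\C$ are already $2$-truncated, the truncated maps $\tau_{\le 2}\rho\colon\mathcal{E}_\C\to\tau_{\le 2}\mathcal{E}_\ZZ$ and $\tau_{\le 2}\hat\rho\colon\hat{\mathcal{E}}_\C\to\tau_{\le 2}\hat{\mathcal{E}}_\ZZ$ are isomorphisms on all homotopy groups, hence equivalences by Whitehead's theorem. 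Composing the Postnikov truncation $\hat{\mathcal{E}}_\ZZ\to\tau_{\le 2}\hat{\mathcal{E}}_\ZZ$ with the inverse of $\tau_{\le 2}\hat\rho$ then gives a natural retraction of $\hat\rho$, and likewise for $\rho$. In the stable homotopy category a split monomorphism is the inclusion of a direct summand, so $\hat{\mathcal{E}}_\ZZ\simeq\hat{\mathcal{E}}_\C\oplus\hat F$ and $\mathcal{E}_\ZZ\simeq\mathcal{E}_\C\oplus F$, where $F$ and $\hat F$ are the $3$-connected covers of $\mathcal{E}_\ZZ$ and $\hat{\mathcal{E}}_\ZZ$ (with $\pi_{2k}\cong\Z$ for $k\ge 2$). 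Passing to degree-$1$ cohomology produces the asserted natural splittings of $\rho$ and $\hat\rho$.

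Finally I would check that the two complements coincide. The natural map $\mathcal{E}_\ZZ\to\hat{\mathcal{E}}_\ZZ$ underlying the inclusion $E^*_\ZZ\hookrightarrow\hE^*_\ZZ$ of Thm.~\ref{thm:Z2} is an isomorphism on $\pi_i$ for every $i\ge 1$ (only $\pi_0$ changes), hence an equivalence on $3$-connected covers, so $F\simeq\hat F$; equivalently, Thm.~\ref{thm:basic} identifies $\hat\rho$ with $\mathrm{id}_{H^1(X,\Z/2)}\times_{_{tw}}\rho$ through the natural Clifford-bundle section of $\delta_0$, so a splitting of $\rho$ transports to one of $\hat\rho$ with the same complement. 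Setting $h(X):=F^1(X)\cong\hat F^1(X)$ then gives \eqref{h-splitting}; its subsequent identification with $bsu^1_{\otimes}(X)\cong k^5(X)$ via \eqref{bbssuu} and the Adams--Priddy theorem is independent of this argument. The main obstacle is precisely the structural input of the second paragraph: one must know that for $D=\C$ the theories degenerate to ordinary cohomology — i.e.\ $\Aut(\K)\simeq K(\Z,2)$ together with Prop.~\ref{prop:Z2} — so that $\mathcal{E}_\C$ and $\hat{\mathcal{E}}_\C$ are $2$-truncated while $\mathcal{E}_\ZZ$ and $\hat{\mathcal{E}}_\ZZ$ carry the full connective $K$-theory in higher degrees. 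It is this truncation that makes $\rho$ and $\hat\rho$ genuine, non-invertible split monomorphisms and forces the complement to be a $3$-connected cover; verifying that $\rho$ is a section (not merely a $\pi_2$-isomorphism) relies on its truncation target being Eilenberg--Mac Lane.
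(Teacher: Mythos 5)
Your proposal is correct, but it runs dual to the paper's own argument, so it is worth recording the difference. The paper works unstably and truncates the \emph{source}: it restricts to the $4$-skeleton $X_4\subset X$, proves that $\rho_4\colon E^1_\C(X_4)\to E^1_\ZZ(X_4)$ and the restriction $r_\C\colon E^1_\C(X)\to E^1_\C(X_4)$ are bijections --- using that $B\Aut(\K)\to B\Aut(\ZZ\otimes\K)$ is $4$-connected by the computations of \cite{DP3}, Whitehead's theorem, and the ordinary-cohomology exact sequences of the pairs $(X,X_4)$ and $(X_4,X_3)$ --- and takes $r_\C^{-1}\circ\rho_4^{-1}\circ r_\ZZ$ as the retraction, setting $h(X)=\ker \hat r_\ZZ\cong \ker r_\ZZ$. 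You instead truncate the \emph{target}: you pass to the representing spectra, observe that $\mathcal{E}_\C\simeq \Sigma^2 H\Z$ and $\hat{\mathcal{E}}_\C$ are $2$-truncated while $\rho$ and $\hat\rho$ are isomorphisms on $\pi_{\le 2}$, and let the Postnikov section $\tau_{\le 2}$ manufacture the retraction, so that the complement is represented by the $3$-connected cover of $\mathcal{E}_\ZZ$ (equivalently of $\hat{\mathcal{E}}_\ZZ$, via the map of spectra underlying Thm.~\ref{thm:Z2}, which is a $\pi_{\ge 1}$-isomorphism). The two complements agree with the paper's: your $F^1(X)$ equals $\ker r_\ZZ$ because $F^1(X_4)=0$ for a $4$-dimensional complex and $E^1_\C(X)\to E^1_\C(X_4)$ is injective. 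What your route buys: naturality of the splitting is immediate, since every arrow is a map in the stable homotopy category and no cellular approximation is needed, and $h(X)$ is identified intrinsically as the degree-one cohomology of a $3$-connected cover, which anticipates the identification $h(X)\cong bsu^1_\otimes(X)$ in Prop.~\ref{prop:split_bsu}. What the paper's route buys: it stays at the level of spaces and finite complexes, and it produces exactly the description $h(X)=\ker\hat r_\ZZ$ that is then used verbatim in the proof of Prop.~\ref{prop:split_bsu}. Two small points to tighten in your write-up: your appeal to the $KK$-equivalence $\C\to\ZZ$ to get the $\pi_2$-isomorphism tacitly uses that the identification \eqref{homotopy} of $\pi_*(\Aut(D\otimes\K))$ with $K_*(D)$ is natural in $D$ (this is precisely what the citation of \cite{DP3}/\cite{DP4} supplies, so say so); and the parenthetical ``this would contribute to $\hE^1_\C(S^5)$'' only rules out $\pi_4$ --- the correct statement is that $\pi_d(\hat{\mathcal{E}}_\C)\cong \hE^1_\C(S^{d+1})=0$ for every $d\ge 3$ by Prop.~\ref{prop:Z2} applied to spheres.
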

}
\begin{proof} By the CW-approximation theorem,
 there are the natural diagrams
\begin{equation}
\begin{tikzcd}
	\hE^1_{\C}(X) \ar[d,"\hat{r}_{\C}"] \ar[r,"\hat{\rho}"] & \hE^1_{\ZZ}(X) \ar[d,"\hat{r}_{\ZZ}"] \\
	\hE^1_{\C}(X_4) \ar[r, "\hat{\rho}_4"]  & \hE^1_{\ZZ}(X_4)
\end{tikzcd} \quad \begin{tikzcd}
	E^1_{\C}(X) \ar[d,"{r}_{\C}"] \ar[r,"{\rho}"] & E^1_{\ZZ}(X) \ar[d,"{r}_{\ZZ}"] \\
	E^1_{\C}(X_4) \ar[r, "{\rho}_4"]  & E^1_{\ZZ}(X_4)
\end{tikzcd}
\end{equation}
  induced by the inclusion of skeleta $ X_4\hookrightarrow X$.
  We will verify that the maps $\hat{\rho}_4, \hat{r}_\C,$ $\rho_4$ and ${r_\C}$  are bijections.
  This will clearly imply \eqref{h-splitting}  with
  {$$h(X):=\ker \hat{r}_{\ZZ}\cong \ker  r_{\ZZ}.$$}
   The isomorphism
  $\ker \hat{r}_{\ZZ}\cong \ker  r_{\ZZ}$ follows from the naturality of \eqref{short}.

  Consider the commutative diagram from the proof of \cite[Thm.~6.7]{DP4}:
  \[
		\begin{tikzcd}
		0 \ar[r]&	E^1_{\ZZ}(X) \ar[r] & \hE^1_{\ZZ}(X) \ar[r] &  H^1(X;\Z/2) \ar[r] & 0\\
		0 \ar[r]& E^1_\C(X) \ar[r]\ar[u,"\rho"]& \hE^1_\C(X)  \ar[r] \ar[u,"\hat\rho"]&   H^1(X;\Z/2)\ar[equal]{u}\ar[r] & 0\\
		0 \ar[r]& H^3(X;\Z) \ar[r]\ar[equal]{u} & H^3(X;\Z)\times_{_{tw}}   H^1(X;\Z/2) \ar[r] \ar[equal]{u}&   H^1(X;\Z/2)\ar[equal]{u}\ar[r] & 0
		\end{tikzcd}
	\]
 If we show that $\rho_4:E^1_\C(X_4) \to E^1_\ZZ(X_4)$ is bijective so is $\hat\rho_4.$
The map $\rho$ is induced by the map $B\Aut(\K)\to B\Aut(\ZZ\otimes \K)$ which is $4$-connected by the computations of \cite{DP3}, see \eqref{homotopy}. By Whitehead's theorem this shows that $\rho_4$ is surjective and that $\rho_3:E^1_\C(X_3) \to E^1_\ZZ(X_3)$ is bijective.
The restriction map $r'_\C \colon E^1_{\C}(X_4) \to E^1_{\C}(X_3)$ in the commutative diagram below
\begin{equation}
\begin{tikzcd}
	E^1_{\C}(X_4) \ar[d,"r'_\C"] \ar[r,"\rho_4"] & E^1_{\ZZ}(X_4)\ar[d,"r'_\ZZ"] \\
	E^1_{\C}(X_3) \ar[r,"\rho_3" above, "\cong" below]  & E^1_{\ZZ}(X_3)
\end{tikzcd}
\end{equation}
is injective since it identifies with the map $H^3(X_4,\Z) \to H^3(X_3,\Z) $ which is injective since
$H^3(X_4/X_3,\Z)=0$. It follows that $\rho_4$ is also injective. Next we show that the map $\hat r_\C$ is bijective.
Using the naturality of the exact sequence from Theorem~\ref{thm:Z2}, since the restriction map  $H^1(X;\Z/2)\to H^1(X_4;\Z/2)$ is bijective, it suffices to show that the restriction map $ r_\C:E^1_\C(X)\to E^1_\C(X_4)$ is bijective.
This follows from the exact sequence
\[H^3(X/X_4,\Z) \to H^3(X,\Z) \to H^3(X_4,\Z) \to H^4(X/X_4,\Z),\]
since $X/X_4$ is 4-connected.
\end{proof}

\begin{proposition}\label{prop:basic}
Let $P\neq \emptyset$ be a set of prime numbers and let $X$ be a finite CW complex. Then there are isomorphisms of groups
\begin{itemize}
\item[(a)]
\(E^1_\ZZ(X)\cong\bE^1_\ZZ(X)\cong H^3(X,\Z)\oplus h(X).\)
\item[(b)]
\(\hE^1_\ZZ(X)\cong (H^1(X,\Z/2)\times_{_{tw}} H^3(X,\Z))\oplus h(X).\)
\item[(c)]
\(E^1_{M_P}(X)\cong H^1(X, (Z_P)^\times_+) \oplus \bE^1_{M_P}(X)\cong H^1(X, (Z_P)^\times_+) \oplus H^3(X,\Z) \otimes \Z_P  \oplus h(X) \otimes \Z_P \)
\item[(d)]
\( \hE^1_{M_P}(X)\cong H^1(X, (Z_P)^\times_+)\oplus \left( H^1(X;\Z/2) \times_{_{tw}} H^3(X,\Z_P)\right)\oplus h(X) \otimes \Z_P\)
with multiplication on $ H^1(X;\Z/2) \times H^3(X,\Z_P)$
\[
	(w, \tau) \cdot (w',\tau') = (w + w', \tau + \tau' + \beta_P(w \cup w'))
\]
for $w,w' \in H^1(X,\Z/2)$ and $\tau,\tau' \in H^3(X,\Z_P)$, where $\beta_P \colon H^2(X,\Z/2) \to H^3(X,\Z_P)$ is the composition of the Bockstein homomorphism with the coefficient map $H^3(X,\Z)\to H^3(X,\Z_P)$.
\end{itemize}
\end{proposition}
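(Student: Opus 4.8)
The plan is to obtain (a) and (b) directly from the splitting established in Proposition~\ref{prop:split}, and then to bootstrap to (c) and (d) by coefficient change via Proposition~\ref{prop:coeff} together with the twisted-product description of $\hE^1$ in terms of $E^1$ from Theorem~\ref{thm:basic}. Throughout I will use the identification $E^1_\C(X) \cong H^3(X, \Z)$.

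For (a), I would first note that $K_0(\ZZ) \cong \Z$ has trivial group of positive units, so the extension $1 \to \Aut_0(\ZZ \otimes \K) \to \Aut(\ZZ \otimes \K) \to K_0(\ZZ)^\times_+ \to 1$ of \cite[Cor.~2.19]{DP2} has trivial quotient and hence $\bE^1_\ZZ(X) \cong E^1_\ZZ(X)$. Proposition~\ref{prop:split} then gives $E^1_\ZZ(X) \cong E^1_\C(X) \oplus h(X) \cong H^3(X, \Z) \oplus h(X)$, which is (a). Part (b) is the other half of Proposition~\ref{prop:split} combined with the description $\hE^1_\C(X) \cong H^1(X, \Z/2) \times_{_{tw}} H^3(X, \Z)$ of Proposition~\ref{prop:Z2}.

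For (c), the first isomorphism is exactly Proposition~\ref{thm:MP}. For the reduced summand I would apply Proposition~\ref{prop:coeff} to get $\bE^1_{M_P}(X) \cong \bE^1_\ZZ(X) \otimes \Z_P$ and then substitute $\bE^1_\ZZ(X) \cong H^3(X, \Z) \oplus h(X)$ from (a). Since $\Z_P$ is flat over $\Z$, the tensor product distributes over the direct sum, giving $H^3(X, \Z) \otimes \Z_P \oplus h(X) \otimes \Z_P$, and the universal coefficient theorem identifies $H^3(X, \Z) \otimes \Z_P$ with $H^3(X, \Z_P)$.

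The real content is (d), and the main obstacle will be to pin down how the twisting cocycle behaves under the splitting of $E^1_{M_P}$. By Theorem~\ref{thm:basic} applied to $D = M_P$ we have $\hE^1_{M_P}(X) \cong H^1(X, \Z/2) \times_{_{tw}} E^1_{M_P}(X)$ with twisting by $j_P \circ \beta$, where $j_P \colon E^1_\C(X) \to E^1_{M_P}(X)$ is induced by $\C \to M_P$. I would then insert the decomposition $E^1_{M_P}(X) \cong H^1(X, (\Z_P)^\times_+) \oplus H^3(X, \Z_P) \oplus h(X) \otimes \Z_P$ from (c) and argue that, under these identifications, $j_P$ is precisely the coefficient map $H^3(X, \Z) \to H^3(X, \Z_P)$ into the middle summand, with vanishing components in the other two. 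Concretely I would factor $\C \to M_P$ through $\C \to \ZZ \to M_P$ and trace it through the natural split injection $E^1_\C \hookrightarrow E^1_\ZZ$ of Proposition~\ref{prop:split} followed by the natural coefficient map $E^1_\ZZ(X) \to E^1_{M_P}(X)$ underlying Proposition~\ref{prop:coeff}; note that $j_P$ factors through $\bE^1_{M_P}$ because $K_0(\C)^\times_+$ is trivial, so it contributes nothing to the $H^1(X, (\Z_P)^\times_+)$ summand. Once $j_P \circ \beta = \beta_P$ is established with image contained in the $H^3(X, \Z_P)$ factor alone, the twisted multiplication couples only $H^1(X, \Z/2)$ with $H^3(X, \Z_P)$, so the summands $H^1(X, (\Z_P)^\times_+)$ and $h(X) \otimes \Z_P$ split off untwisted and we obtain the stated decomposition together with its multiplication formula.
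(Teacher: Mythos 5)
Your proposal is correct and follows essentially the same route as the paper: parts (a)--(c) assembled from Proposition~\ref{prop:split}, Proposition~\ref{thm:MP} and Proposition~\ref{prop:coeff}, and part (d) reduced to identifying $j_P$ with the coefficient map $H^3(X,\Z)\to H^3(X,\Z_P)$ landing in the middle summand (the paper makes exactly this identification, using that the image of $j_P$ lies in $\bE^1_{M_P}(X)$ and that the isomorphism $\bE^1_{M_P}(X)\cong\bE^1_\ZZ(X)\otimes\Z_P$ turns the natural map into $x\mapsto x\otimes 1$). The only cosmetic differences are that you cite Theorem~\ref{thm:basic} directly where the paper re-derives it from the diagram of short exact sequences, and that for (b) you use the $\hE$-half of the splitting in Proposition~\ref{prop:split} together with Proposition~\ref{prop:Z2} rather than the twisted-product description; both assemblies rest on the same lemmas.
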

\begin{proof}
The isomorphism (a) follows from \eqref{h-splitting}. {Recall that $E^1_\ZZ(X)\cong\bE^1_\ZZ(X)$ since $\Aut(\ZZ \otimes\K)$ is path connected.} The isomorphism (c) follows from (a), Proposition~\ref{thm:MP} and Proposition~\ref{prop:coeff}.

It remains to deal with (b) and (d). Along the way we shall review the proof of Theorem~\ref{thm:basic}.
If $D'\mapsto D$ is a unital $*$-monomorphism of strongly self-absorbing $C^*$-algebras, by \cite[Lem.~6.3]{DP4}
there is a commutative diagram of commutative  $\mI$-monoids:
\[
		\begin{tikzcd}
			G_D(\mathbf{n}) \ar[r] & \Ggr_D(\mathbf{n}) \ar[r] &  \Z/2\\
		G_{D'}(\mathbf{n}) \ar[r]\ar[u] & \Ggr_{D'}(\mathbf{n}) \ar[r] \ar[u]&  \Z/2\ar[equal]{u}
		\end{tikzcd}
	\]
which induces a commutative diagram
	\[
		\begin{tikzcd}
		0 \ar[r]&	E^1_{M_P}(X) \ar[r] & \hE^1_{M_P}(X) \ar[r] &  H^1(X;\Z/2) \ar[r] & 0\\
		0 \ar[r]&	E^1_\ZZ(X) \ar[r] \ar[u]& \hE^1_\ZZ(X) \ar[r] \ar[u]&  H^1(X;\Z/2) \ar[r] \ar[equal]{u}\ar[r] & 0\\
		0 \ar[r]& E^1_\C(X) \ar[r]\ar[u] & \hE^1_\C(X)  \ar[r] \ar[u]&   H^1(X;\Z/2)\ar[equal]{u}\ar[r] & 0\\
		0 \ar[r]& H^3(X;\Z) \ar[r]\ar[equal]{u} & H^3(X;\Z)\times  H^1(X;\Z/2) \ar[r] \ar[equal]{u}&   H^1(X;\Z/2)\ar[equal]{u}\ar[r] & 0
		\end{tikzcd}
	\]

 From Proposition~\ref{prop:Z2}, Remark~\ref{remark:review} and the diagram above we obtain :
\[\hE^1_\ZZ(X) \cong H^1(X;\Z/2) \times E^1_\ZZ(X) \cong \big(H^1(X;\Z/2) \times_{_{tw}} H^3(X,\Z)\big)\oplus h(X),\]
with multiplication on the first two factors as in Proposition~\ref{prop:Z2}. This proves part (b).

Let  $j: H^3(X,\Z)= E^1_{\C}(X)\to E^1_{M_P}(X)$ be the map induced by unital $*$-homomorphism $\C \to M_P$.
From the diagram above, Proposition~\ref{prop:Z2} and Remark~\ref{remark:review} we obtain that
\begin{equation}\label{eqn:ZZZP}
  \hE^1_{M_P}(X)\cong H^1(X;\Z/2) \times_{_{tw}}  E^1_{M_P}(X)
\end{equation}
where the group structure is given by $(w, x) \cdot (w', x') = (w + w', x + x' + j(\beta(w \cup w')))$
for $w,w' \in H^1(X,\Z/2)$ and $x,x' \in E^1_{M_P}(X)$. Note that the image of $j$ is contained in $\bE^1_{M_P}(X)$
since $E^1_{\C}(X)=\bE^1_{\C}(X)$.
Using the isomorphism:
\[E^1_{M_P}(X)\cong H^1(X, (Z_P)^\times_+) \oplus \bE^1_{M_P}(X)\cong H^1(X, (Z_P)^\times_+) \oplus\bE^1_\ZZ(X)\otimes \Z_P\] and the previous discussion,
we can identify that map $j: E^1_\C(X)\to E^1_{M_P}(X)$ with the  map
\[H^3(X,\Z)  \to H^1(X, (Z_P)^\times_+) \oplus H^3(X,\Z) \otimes \Z_P\oplus h(X) \otimes \Z_P,\]
induced by $H^3(X,\Z) \to H^3(X,\Z) \otimes \Z_P$, $h\mapsto h\otimes 1$.
Part (d) follows now from \eqref{eqn:ZZZP}.
\end{proof}

\subsection{Comparing {$E^1_\ZZ(X)$ and $bsu^1_{\otimes}(X)$ and proof of Theorem A}}
In this section we will identify the summand $h(X)$ of $E^1_{\ZZ}(X)$ from Prop.~\ref{prop:basic} with the first group of the generalised cohomology theory $bsu_\otimes^*(X)$. To understand this group note that the spaces $BU$ and $BSU$ both have two $H$-space structures: one arising from the direct sum and another one from the tensor product. To distinguish them we will denote the second one by $BU_{\otimes}$ and $BSU_{\otimes}$, respectively. It was first observed by Segal in \cite{paper:SegalCatAndCoh} that $BU_{\otimes}$ is in fact an infinite loop space. In particular, there is a cohomology theory $X \mapsto bu_{\otimes}^*(X)$, such that $bu_{\otimes}^0(X) = [X, BU_{\otimes}]$. This was later understood by May in \cite{May:E-infinity} to fit into a much richer theory of units for $E_{\infty}$-ring spectra. $K$-theory provides such an $E_\infty$-ring spectrum $KU$, which has a unit spectrum $gl_1(KU)$, whose $0$-connected cover is $sl_1(KU) \simeq bu_{\otimes}$ and whose $2$-connected cover gives $bsu_{\otimes}$.

The results in \cite{May:E-infinity} are phrased in the language of $S$-modules and $S$-algebras and not in terms of symmetric spectra. Thus, in principle the cohomology theories represented by the spectrum $gl_1(KU^\C)$ constructed in Sec.~\ref{subsec: units-KUd} could differ from $gl_1(KU)$ for an $S$-algebra $KU$ representing topological $K$-theory. We will see in Sections~5 that this is not the case by proving a strong uniqueness result.
{In particular,  Corollary~\ref{cor:appendix} establishes a natural isomorphism
\begin{equation}\label{eqn:basiccc}
	gl_1(KU^\C)^1(X) \cong gl_1(KU)^1(X),
\end{equation}
 which implies the following proposition:}

\begin{proposition}\label{prop:split_bsu}
There is a natural isomorphism $\hE^1_{\ZZ}(X)\cong \hE^1_{\C}(X) \oplus bsu^1_{\otimes} (X)$.
\end{proposition}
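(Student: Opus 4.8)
The plan is to combine the splitting already established in Proposition~\ref{prop:split} with an identification of the complementary summand $h(X)$ as the first group of the tensor-product cohomology theory $bsu_\otimes^*$. Recall from Proposition~\ref{prop:split} that we have a natural splitting $\hE^1_{\ZZ}(X) \cong \hE^1_{\C}(X) \oplus h(X)$, where $h(X) = \ker\bigl(\hat r_{\ZZ}\colon \hE^1_{\ZZ}(X) \to \hE^1_{\ZZ}(X_4)\bigr)$, and also $E^1_{\ZZ}(X) \cong E^1_{\C}(X) \oplus h(X)$ with the same summand. So the entire content of the proposition is the natural identification $h(X) \cong bsu^1_{\otimes}(X)$.

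First I would recall that by the results of Section~3, $E^*_{\ZZ}(X) \cong \hE^*_{\ZZ}(X)$ as theories (since $\Aut(\ZZ \otimes \K)$ is path connected, and $E^1_\ZZ \cong \bar E^1_\ZZ$), and that by the theorem of \cite{DP4} quoted before Subsection~\ref{sec:three}, the theory $\hE^*_{\ZZ}$ is represented by $gl_1(KU^\ZZ)$, via $\hE^*_{\ZZ}(X) \cong gl_1(KU^\ZZ)^*(X)$. Using the stable equivalence $KU^\C \simeq KU^\ZZ$ from \eqref{eq:useful}, this gives $gl_1(KU^\ZZ)^*(X) \cong gl_1(KU^\C)^*(X)$. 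Invoking now the uniqueness result in the form of the natural isomorphism \eqref{eqn:basiccc}, namely $gl_1(KU^\C)^1(X) \cong gl_1(KU)^1(X)$, we may pass to the classical $S$-algebra model. Thus the theory underlying $\hE^1_\ZZ$ agrees with $gl_1(KU)^1$, for which the associated $H$-space decomposition (units, $sl_1$, $bu_\otimes$, $bsu_\otimes$) described at the start of this subsection is available.

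The remaining step is to see that the summand $h(X)$ corresponds precisely to $bsu^1_{\otimes}(X)$ under this identification. The idea is that $gl_1(KU)$ splits as a product of its Postnikov/connected-cover pieces: $gl_1(KU) \simeq H\Z/2 \times sl_1(KU)$ at the bottom, with $sl_1(KU)$ its $0$-connected cover and $bsu_\otimes$ its $2$-connected cover, so that $gl_1(KU)^1(X)$ decomposes into a $\pi_0$-piece contributing $H^1(X,\Z/2)$, a middle piece contributing $H^3(X,\Z)$ (coming from $\pi_2(gl_1(KU)) \cong \Z$ via the logarithm/first Chern class), and the top piece $bsu^1_{\otimes}(X)$. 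Matching this against the splitting $\hE^1_\ZZ(X) \cong H^1(X,\Z/2) \times_{tw} H^3(X,\Z) \oplus h(X)$ of Proposition~\ref{prop:basic}(b), the $H^1(X,\Z/2)$ and $H^3(X,\Z)$ factors are exactly $\hE^1_\C(X)$ (by Proposition~\ref{prop:Z2}), so naturality forces the complement $h(X)$ to match the complement $bsu^1_{\otimes}(X)$. Concretely, I would identify $h(X)$ with the kernel of the restriction to the $4$-skeleton and observe that $bsu_\otimes$, being $3$-connected, represents exactly the part of $gl_1(KU)^1$ that vanishes on $X_4$; since $h(X)$ is defined as the kernel of $\hat r_\ZZ$ to the $4$-skeleton, the two kernels coincide naturally.

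The main obstacle is ensuring that the identification is genuinely \emph{natural} and that the summand $h(X)$ lands correctly under the chain of equivalences $\hE^*_\ZZ \cong gl_1(KU^\ZZ)^* \cong gl_1(KU^\C)^* \cong gl_1(KU)^*$; each of these is natural, but the splitting of $gl_1(KU)$ into its connected covers and the compatibility of the $3$-connectivity of $bsu_\otimes$ with the skeletal definition of $h(X)$ must be checked carefully. In particular, the fact that $bsu_\otimes$ is the $2$-connected cover of $gl_1(KU)$ while $h(X)$ is cut out by the $4$-skeleton relies on the connectivity bookkeeping $bsu_\otimes \simeq \Sigma^4 ku$ of Subsection~2.4 together with the computation that $B\Aut(\K) \to B\Aut(\ZZ \otimes \K)$ is $4$-connected, used in Proposition~\ref{prop:split}. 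Once the uniqueness result \eqref{eqn:basiccc} is granted, everything else is a matter of assembling these naturality statements, so the genuine difficulty has already been isolated into Section~5.
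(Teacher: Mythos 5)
Your overall skeleton is the paper's own: split off $\hE^1_{\C}(X)$ via Proposition~\ref{prop:split}, carry the complement $h(X)=\ker\hat{r}_{\ZZ}$ through the natural isomorphisms $\hE^1_{\ZZ}(X)\cong gl_1(KU^{\ZZ})^1(X)\cong gl_1(KU^{\C})^1(X)\cong gl_1(KU)^1(X)$ furnished by \eqref{eqn:key-unit}, \eqref{eq:useful} and Corollary~\ref{cor:appendix}, and then identify the transported kernel with $bsu^1_{\otimes}(X)$. Your closing ``concretely'' sentence --- $h(X)$ corresponds to $\ker\bigl(gl_1(KU)^1(X)\to gl_1(KU)^1(X_4)\bigr)$, which is $bsu^1_{\otimes}(X)$ --- is exactly the paper's proof.

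The problem is the argument you offer as the main one. There is no splitting of spectra $gl_1(KU)\simeq H\Z/2\times sl_1(KU)$, and hence no natural direct-sum decomposition $gl_1(KU)^1(X)\cong H^1(X,\Z/2)\oplus H^3(X,\Z)\oplus bsu^1_{\otimes}(X)$. Section~4.1 of the paper warns about precisely this: the product decompositions of $GL_1(KU)$ and $BGL_1(KU)$ \emph{do not respect the infinite loop space structure}; the only spectrum-level splitting available is \eqref{K}, i.e.\ $bu_{\otimes}\simeq \Sigma^2 H\Z\vee bsu_{\otimes}$, and the $\Z/2$ does not split off. If your decomposition held, $gl_1(KU)^1(X)\cong\hE^1_{\ZZ}(X)$ would contain $H^1(X,\Z/2)\oplus H^3(X,\Z)$ with the untwisted group structure, contradicting Proposition~\ref{prop:Z2} (Donovan--Karoubi) and erasing the very twisting recorded in Theorem~A(c),(d): on large skeleta of $\R P^\infty\times\R P^\infty$ one has $\beta(w\cup w')\neq 0$, while every natural transformation $H^1(\cdot\,,\Z/2)\to H^3(\cdot\,,\Z)$ vanishes (since $H^3(\R P^\infty;\Z)=0$), so the extension $0\to bu^1_{\otimes}(X)\to gl_1(KU)^1(X)\to H^1(X,\Z/2)\to 0$ has no natural splitting and the relevant $k$-invariant of $gl_1(KU)$ is nontrivial. (Separately, ``naturality forces the complement to match'' is not by itself a valid inference; complements of a direct summand are not unique.) So that paragraph must be deleted, and the kernel argument has to carry the whole weight; it does, provided you justify it: (i) $H^1(X,\Z/2)\to H^1(X_4,\Z/2)$ and $H^3(X,\Z)\to H^3(X_4,\Z)$ are injective because $X/X_4$ is $4$-connected; (ii) $bu^1_{\otimes}(X)\cong H^3(X,\Z)\oplus bsu^1_{\otimes}(X)$ naturally by \eqref{K}; (iii) $bsu^1_{\otimes}(X_4)=[X_4,BBSU_{\otimes}]=0$ since $BBSU_{\otimes}$ is $4$-connected and $\dim X_4\le 4$; then a short chase in the long exact sequence of $bu_{\otimes}\to gl_1(KU)\to H\Z/2$ (whose connecting map on $H^0(\cdot\,,\Z/2)$ vanishes) identifies $\ker\bigl(gl_1(KU)^1(X)\to gl_1(KU)^1(X_4)\bigr)$ with $bsu^1_{\otimes}(X)$ naturally. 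Finally, your opening claim that $E^*_{\ZZ}(X)\cong\hE^*_{\ZZ}(X)$ ``as theories'' is also false: by Theorem~\ref{thm:Z2} they differ by $H^1(X,\Z/2)$ (compare them on $X=S^1$); what you actually need, and quote correctly right after, is \eqref{eqn:key-unit}. Since that claim is never used, it is harmless, but it should be removed.
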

\begin{proof}  Consider the commutative diagram
\begin{equation}
\begin{tikzcd}
	\hE^1_{\ZZ}(X) \ar[d,"\hat{r}_\ZZ"] \ar[r,"\cong"] & gl_1(KU^\ZZ)^1(X)\ar[d] & gl_1(KU^\C)^1(X) \ar[d] \ar[l,"\cong" above] \\
	\hE^1_{\ZZ}(X_4) \ar[r,"\cong"] & gl_1(KU^\ZZ)^1(X_4) & gl_1(KU^\C)^1(X_4) \ar[l,"\cong" above]
\end{tikzcd}
\end{equation}
The horizontal arrows are isomorphisms by \eqref{eq:useful} and \eqref{eqn:key-unit}. It follows that the vertical arrows have isomorphic kernels. {Thus $h(X)=\ker\hat{r}_\ZZ$ is isomorphic to   the kernel of the map $gl_1(KU^\C)^1(X)\to gl_1(KU^\C)^1(X_4)$ and which in its turn, by Corollary~\ref{cor:appendix}, is isomorphic to the kernel of the map $gl_1(KU)^1(X)\to gl_1(KU)^1(X_4)$ which is the map $bu^1_\otimes (X)\to bu^1_\otimes (X_4)$ and hence isomorphic to $bsu^1_{\otimes} (X)$.}
\end{proof}
{As a consequence of the Adams-Priddy result on the uniqueness of $bsu$ which we review in Sec.~{4.1}, for a finite CW-complex $X$ there is a (nonnatural) isomorphism
\begin{equation}\label{eqn:basicc}
	bsu_{\otimes}^*(X)\cong  bsu_{\oplus}^*(X).
\end{equation}}
 {Using this in conjunction with the results from the previous sections, we derive our main result (Theorem A from introduction) which we restate here for the convenience of the reader.}
{\begin{theorem} \label{thm:main-intro1} Let $X$ be a finite  CW-complex and let $P$ be a nonempty set of prime numbers. There are (not natural) isomorphisms
\begin{itemize}
	\item[(a)] ${E}^1_{\ZZ}(X)\cong  H^3(X,\Z)\oplus k^5(X)$.
	\item[(b)] ${E}^1_{M_P}(X)\cong  H^1(X,(\Z_P)_+^\times){\oplus} H^3(X,\Z_P)\oplus k^5(X,\Z_P)$
	\item[(c)] ${E}^1_{\OO_\infty}(X)\cong \big(H^1(X, \Z/2)\times_{_{tw}}  H^3(X,\Z)\big)\oplus k^5(X)$.
    \item[(d)] ${E}^1_{M_P \otimes \OO_\infty}(X)\cong H^1(X,(\Z_P)_+^\times)\oplus \big(H^1(X, \Z/2)\times_{_{tw}} H^3(X,\Z_P)\big)\oplus k^5(X,\Z_P)$
\end{itemize}
The (twisted) multiplication on $ H^1(X;\Z/2) \times H^3(X,\Z_P)$ is given by
\begin{equation}\label{xcxc1}
	(w, \tau) \cdot (w',\tau') = (w + w', \tau + \tau' + \beta_P(w \cup w'))
\end{equation}
for $w,w' \in H^1(X,\Z/2)$ and $\tau,\tau' \in H^3(X,\Z_P)$, where $\beta_P \colon H^2(X,\Z/2) \to H^3(X,\Z_P)$ is the composition of the Bockstein homomorphism $\beta$ with the coefficient map $H^3(X,\Z)\to H^3(X,\Z_P)$. The multiplicaton in (c) is just like in ~\eqref{xcxc1} with   $\beta_P$
 replaced by $\beta \colon H^2(X,\Z/2) \to H^3(X,\Z)$.
\end{theorem}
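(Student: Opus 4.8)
The plan is to assemble the results of the preceding sections: essentially all of the analytic and homotopy-theoretic content has already been isolated, so what remains is to rewrite the summands $h(X)$ and $h(X)\otimes\Z_P$ appearing in Proposition~\ref{prop:basic} as connective $K$-theory groups, and to route the two purely infinite cases through their stably finite cores. The single bridge that performs the translation is the identification $h(X)\cong k^5(X)$, which I establish first.

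To obtain $h(X)\cong k^5(X)$ I combine Proposition~\ref{prop:split_bsu}, whose proof already gives $h(X)\cong bsu^1_\otimes(X)$, with the Adams--Priddy isomorphism \eqref{eqn:basicc}, yielding $bsu^1_\otimes(X)\cong bsu^1_\oplus(X)$, and finally with \eqref{bbssuu} in degree one, i.e.\ $bsu^1_\oplus(X)\cong k^5(X)$. This is exactly where the non-naturality asserted in the statement enters, since \eqref{eqn:basicc} is not natural. Part~(a) is then immediate: Proposition~\ref{prop:basic}(a) gives $E^1_\ZZ(X)\cong H^3(X,\Z)\oplus h(X)$, and substituting $h(X)\cong k^5(X)$ gives the claim. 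For part~(b) I start from Proposition~\ref{prop:basic}(c),
\[
	E^1_{M_P}(X)\cong H^1(X,(\Z_P)^\times_+)\oplus\big(H^3(X,\Z)\otimes\Z_P\big)\oplus\big(h(X)\otimes\Z_P\big),
\]
and use that $\Z_P$ is a flat $\Z$-module: for a finite CW-complex this gives $H^3(X,\Z)\otimes\Z_P\cong H^3(X,\Z_P)$ and, applied to the connective $K$-theory spectrum, $h(X)\otimes\Z_P\cong k^5(X)\otimes\Z_P\cong k^5(X,\Z_P)$; reassembling gives~(b).

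For the purely infinite cases I exploit the reduction $E^1_{D\otimes\OO_\infty}(X)\cong\hE^1_D(X)$ from \cite{DP4}, applied with the stably finite core that absorbs $\ZZ$. For part~(c) I take $D=\ZZ$, so that $\ZZ\otimes\OO_\infty\cong\OO_\infty$ and hence $E^1_{\OO_\infty}(X)\cong\hE^1_\ZZ(X)$; Proposition~\ref{prop:basic}(b) identifies the latter with $\big(H^1(X,\Z/2)\times_{_{tw}}H^3(X,\Z)\big)\oplus h(X)$, with twist $\beta$ as in Proposition~\ref{prop:Z2}, and $h(X)\cong k^5(X)$ finishes it. For part~(d) I take $D=M_P$, so that $E^1_{M_P\otimes\OO_\infty}(X)\cong\hE^1_{M_P}(X)$, and Proposition~\ref{prop:basic}(d) supplies
\[
	H^1(X,(\Z_P)^\times_+)\oplus\big(H^1(X,\Z/2)\times_{_{tw}}H^3(X,\Z_P)\big)\oplus\big(h(X)\otimes\Z_P\big),
\]
with twist $\beta_P$; the coefficient identification $h(X)\otimes\Z_P\cong k^5(X,\Z_P)$ from the previous paragraph then yields~(d).

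The main subtlety to flag is the choice of stably finite core in (c) and (d): the representation $\OO_\infty\cong D\otimes\OO_\infty$ is \emph{not} unique, and using $D=\C$ would produce only $\hE^1_\C(X)\cong H^1(X,\Z/2)\times_{_{tw}}H^3(X,\Z)$, missing the $k^5(X)$ summand. The correct core is the one satisfying $D\cong D\otimes\ZZ$ (namely $\ZZ$, respectively $M_P$), for which the reduction isomorphism retains the higher summand $h(X)$. Beyond this bookkeeping the genuine difficulty lies entirely upstream: the identification $h(X)\cong k^5(X)$ silently rests on the uniqueness isomorphism $gl_1(KU^\C)^1(X)\cong gl_1(KU)^1(X)$ of Corollary~\ref{cor:appendix} (via Proposition~\ref{prop:split_bsu}) together with the Adams--Priddy theorem \eqref{eqn:basicc}, so that once these are granted the proof of Theorem~A is pure assembly.
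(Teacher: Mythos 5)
Your proof is correct and follows essentially the same route as the paper: the paper likewise obtains $h(X)\cong bsu^1_{\otimes}(X)\cong k^5(X)$ from Proposition~\ref{prop:split_bsu}, the Adams--Priddy isomorphism \eqref{eqn:basicc} and \eqref{bbssuu}, tensors with $\Z_P$ to handle the localised cases, and feeds this into Proposition~\ref{prop:basic} (via Theorem~\ref{thm:basic}) together with the reduction $E^1_{D\otimes \OO_\infty}(X)\cong \hE^1_{D}(X)$ from \cite{DP4}. Your closing remark that the purely infinite cases must be routed through the $\ZZ$-absorbing cores $D=\ZZ$ and $D=M_P$ (and not $D=\C$, which would lose the $k^5(X)$ summand) is a correct reading of how the paper invokes the \cite{DP4} reduction, a point the paper's own terse proof leaves implicit.
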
}

\begin{proof}
  By \eqref{eqn:basicc} we have  $bsu_{\otimes}^*(X)\otimes \Z_P\cong  bsu_{\oplus}^*(X)\otimes \Z_P$ for any set of primes $P$.
   It follows by \eqref{bbssuu} that $bsu_{\otimes}^1(X)\otimes \Z_P\cong k^5(X)\otimes \Z_P \cong k^5(X, \Z_P)$.
  Thus the statement follows now from Propositions~\ref{prop:split_bsu} and ~\ref{thm:basic} as $h(X)\cong bsu_{\otimes}^1(X)\cong k^5(X)$.
\end{proof}
The map $\beta_P$ vanishes if $H^3(X,\Z)$ has no $2$-torsion or if $2\in P$.

{The  proof of Theorem~\ref{thm:main-intro1} relies on the isomorphisms \eqref{eqn:basicc} and \eqref{eqn:basiccc} that will discussed in the next two sections.}

\section{$K$-theory as a commutative $S$-algebra and its units}\label{Sect.4}

The tensor product gives topological $K$-theory the structure of a multiplicative cohomology theory. The graded commutative multiplication lifts to the level of spectra in the sense of stable homotopy theory. In fact, $K$-theory can be represented by an $E_\infty$-ring spectrum. There are several approaches to make this precise. We have already met the commutative symmetric ring spectrum $KU^\C$. As we have seen in the previous sections, $KU^\C$ is closely linked to the classification of $C^*$-algebra bundles. In this section we give a brief overview of $K$-theory as a commutative $S$-algebra. A lot of the results about the infinite loop spaces $BU_{\oplus}$ and $BU_{\otimes}$ are easiest to prove using this approach. We will only give a brief overview of $S$-modules and $S$-algebras here and refer the reader to \cite{EKMM} for a complete reference.

Constructing a symmetric monoidal category of spectra with the sphere spectrum $S$ as its unit object is quite an intricate endeavour. The category $\OSpec[\bL]$ of (coordinate free) spectra that are also algebras over the linear isometries operad $\bL$ has almost all of the desired properties, in particular a symmetric monoidal structure given by a smash product $\wedge$ (see \cite[Section I.5]{EKMM}). The only defect is that the sphere spectrum is not a unit object. This can be fixed by restricting to the full subcategory $\SMod \subset \OSpec[\bL]$, on which $S$ acts like a unit, and the objects of $\SMod$ are called $S$-modules \cite[Def.~II.1.1]{EKMM}.

A \emph{(commutative) $S$-algebra} is a (commutative) monoid with respect to $\wedge$ in $\SMod$. We will write $\ComS$ for the category of commutative $S$-algebras \cite[Section~II.3]{EKMM}. Both, $\SMod$ and $\ComS$, are model categories \cite[Section~VII.4]{EKMM}; the weak equivalences are the morphisms that induce an isomorphism on $\pi_*$.

A commutative $S$-algebra representing connective $K$-theory can be constructed from the bipermutative category of finite-dimensional complex inner product spaces and unitary isomorphisms as follows: Let $\catU$ be the topological category with objects $\N_0$, i.e.\ the natural numbers including~$0$, where we think of $n \in \N_0$ as $\C^n$. The morphism spaces are given by
\[
	\hom(m,n) = \begin{cases}
		\emptyset & \text{if } m \neq n\ , \\
		U(n) & \text{if } m = n\ ,
	\end{cases}
\]
where $U(0)$ is the trivial group. The sum and product operations on $\N_0$ extend to the morphisms of $\catU$ via the block sum $\oplus$ and the Kronecker product $\otimes$ of unitary matrices. This gives $(\catU, \oplus, \otimes)$ the structure of a bipermutative category.

Combining \cite[Section~3, discussion on p.~24]{May:mult_inf_loop} and \cite[Cor.~3.6]{EKMM} shows that there is a commutative $S$-algebra $ku$ associated to $\catU$ that represents connective complex topological $K$-theory as a multiplicative cohomology theory by \cite[VIII.2.1]{May:E-infinity}. Its periodic counterpart, the commutative $S$-algebra $KU$, can be constructed by inverting the Bott element $\beta \in \pi_2(ku)$. (In fact, $KU$ is even a commutative $ku$-algebra by \cite[Thm.~VIII.4.3]{EKMM}.)

The $0$th space of an $\Omega$-spectrum is an infinite loop space. Extending this observation to the category $\OSpec[\bL]$ we obtain a functor $\Omega^\infty \colon \OSpec[\bL] \to \mathcal{T}[\bL]$ with codomain given by the $\bL$-algebras in the category of based topological spaces $\mathcal{T}$. Let $R \in \ComS \subset \OSpec[\bL]$ be a commutative $S$-algebra. The abelian group $\pi_0(\Omega^\infty R)$ turns out to be a ring with respect to the multiplication inherited from $R$. The space $GL_1(R)$ of units of $R$ is defined by the pullback diagram
\begin{equation} \label{eqn:space_of_units}
\begin{tikzcd}
	GL_1(R) \ar[r] \ar[d] & \Omega^\infty R \ar[d] \\
		GL_1(\pi_0(\Omega^\infty R)) \ar[r] & \pi_0(\Omega^\infty R)
\end{tikzcd}
\end{equation}
Thus, $GL_1(R)$ consists of those components in $\Omega^\infty R$ that are invertible with respect to the multiplication of $R$. The space $GL_1(R)$ turns out to be an $E_{\infty}$-ring space. As such it gives rise to a connective spectrum $gl_1(R)$ such that $\Omega^\infty gl_1(R) = GL_1(R)$. This construction can be made functorial and takes values in the category $\wSpec$ of \emph{weak $\Omega$-spectra}.

In contrast to highly structured spectra a weak $\Omega$-spectrum just consists of a sequence of based spaces $(X_n)_{n \in \N_0}$ together with structure maps $X_n \to \Omega X_{n+1}$ that are weak homotopy equivalences. A map of weak $\Omega$-spectra is a sequence of maps $(f_n)_{n \in \N_0}$ with $f_n \colon X_n \to Y_n$ compatible with the structure maps. An equivalence of weak $\Omega$-spectra $X,Y$ is a chain of such maps
\[
	\begin{tikzcd}
		X  & \ar[l] Z^{(1)} \ar[r] & Z^{(2)} & \ar[l] Z^{(3)} \ar[r] & \dots  & \ar[l] Z^{(k)} \ar[r] & Y
	\end{tikzcd}
\]
that are levelwise weak equivalences \cite[p.~209]{MayThomason:uniqueness}. For details about the functor
\[
	gl_1 \colon \text{Com}_{S} \to \wSpec
\]
from commutative $S$-algebras to the category $\wSpec$ we refer the reader to Section~4 and 5 in \cite{AndoBlumbergGepnerHopkinsRezk:Units} which is based on the treatment of units in \cite{May:E-infinity}.

If we replace the full group of invertible elements in $\pi_0(\Omega^\infty R)$ in the pullback diagram defining $GL_1(R)$ by the trivial subgroup (consisting only of the unit element of $\pi_0(\Omega^\infty R)$), then we obtain the space of special units $SL_1(R)$ and a corresponding connective spectrum $sl_1(R)$ (see for example \cite[Def.~7.6]{May:Einfty_spaces}). As a spectrum $sl_1(R) \to gl_1(R)$ is the $0$-connected cover.

\subsection{Localizations, connective $K$-theory, $bsu_{\oplus}$ and $bsu_{\otimes}$}\label{4.1}
Recall that we denote by $KU$ and $ku$ the commutative $S$-algebras representing periodic complex $K$-theory and its connective counterpart, respectively. The associated unit spectra are $gl_1(KU)$ and $gl_1(ku)$. The localisation map $ku \to KU$ that inverts the Bott element is a morphism of commutative $S$-algebras which induces an equivalence of the underlying infinite loop spaces and therefore also an equivalence
\[
	gl_1(ku) \to gl_1(KU)\ .
\]
We give some background and explain below how $gl_1(KU)$ is related to the spectrum $bu_{\otimes}$ appearing in \cite{May:E-infinity} (and, using slightly different machinery, in \cite{paper:SegalCatAndCoh}). We denote by $U$ the infinite unitary group, i.e.\ the colimit over the groups $U(n)$, and by $SU$ the infinite special unitary group, i.e.\ the colimit over the groups~$SU(n)$. As pointed out in the last section the operations of direct sum and tensor product on these groups are known to induce $H$-space structures on the corresponding classifying spaces. We indicate these structure by using the notation $BU_{\oplus}$, $BSU_{\oplus}$ and respectively $BU_{\otimes}$, $BSU_{\otimes}$.

Bott periodicity shows that $BU_{\oplus}$ and $BSU_{\oplus}$ are infinite loop spaces. The corresponding spectra are denoted by $bu_{\oplus}$ and $bsu_{\oplus}$. Note that $GL_1(\pi_0(\Omega^\infty KU)) \cong  GL_1(\pi_0(BU \times \Z)) \cong \Z/2\Z$. Hence, we obtain
\begin{align*}
	SL_1(KU) &\simeq BU_{\otimes} \ ,\\
	GL_1(KU) &\simeq \Z/2\Z \times BU_{\otimes} \ .
\end{align*}
Note that the first equivalence can be used to equip $BU_{\otimes}$ with an infinite loop space structure. We will therefore denote the spectrum $sl_1(KU)$ also by $bu_{\otimes}$. Let $bsu_{\otimes}$ be the $2$-connected cover of the spectrum $bu_{\otimes}$ and note that $\Omega^\infty bsu_{\otimes} \simeq BSU_{\otimes}$. Denote by $BGL_1(KU)$ the first delooping of $GL_1(KU)$, i.e.\ the first space in the sequence forming the spectrum $gl_1(KU)$. Likewise, let $BBSU_{\otimes}$ be the first delooping of $BSU_{\otimes}$ with respect to the spectrum $bsu_{\otimes}$. By \cite[Lem.~V.3.1]{May:E-infinity} (see also \cite[p.406]{MST}) we have a splitting of infinite loop spaces
\begin{equation} \label{K}
	BU_{\otimes} \simeq K(\Z,2) \times BSU_{\otimes}\ .
\end{equation}
 While it is true that the spaces $GL_1(KU)$ and  $BGL_1(KU)$ also decompose as products
\begin{align*}
	GL_1(KU) &\simeq \Z/2 \times BU_{\otimes}  \simeq \Z/2 \times K(\Z,2) \times BSU_{\otimes}\ , \\
	BGL_1(KU) &\simeq K(\Z/2,1) \times K(\Z,3) \times BBSU_{\otimes}\ ,
\end{align*}
these decompositions do not respect the infinite loop space structure {as  noted implicitly in \cite{paper:AtiyahSegal}.}
{Indeed, as we verify in the paper
\[[X,BGL_1(KU) ] \cong (H^1(X,\Z/2)\times_{_{tw}} H^3(X,\Z))\oplus bsu_{\otimes}^1(X)\]
which explains the twisting of the multiplication in Theorem A.}

 By a classic result of Adams and Priddy \cite{AP} (see also \cite[Thm.~V.4.2]{May:E-infinity}, or \cite[Cor.~10.3]{May:Einfty_spaces} for a more general statement), $BSU_{\oplus}$ and $BSU_{\otimes}$ become equivalent as infinite loop spaces on localization at any prime $p$.
\begin{theorem}[\cite{AP}]\label{thm:AP} There is an equivalence of infinite loop spaces
  \[(BSU_{\oplus})_{(p)}\simeq (BSU_{\otimes})_{(p)}\ .\]
\end{theorem}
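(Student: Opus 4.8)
The statement is the theorem of Adams and Priddy \cite{AP}, so the plan is to sketch their uniqueness argument in the form best adapted to the complex case. Both $(BSU_{\oplus})_{(p)}$ and $(BSU_{\otimes})_{(p)}$ have the \emph{same} underlying space, namely $BSU_{(p)}$; their two infinite loop structures are encoded by two connective spectra $bsu_{\oplus}$ and $bsu_{\otimes}$, each with homotopy groups $\pi_{2n} \cong \Z_{(p)}$ for $n \geq 2$ and $0$ in all other degrees. Producing an equivalence of infinite loop spaces is therefore the same as producing a $p$-local equivalence of spectra $bsu_{\oplus} \simeq bsu_{\otimes}$, and since the homotopy groups already agree abstractly, the task reduces to matching the two Postnikov systems, i.e.\ to showing the $k$-invariants can be made to coincide.

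First I would build the comparison map inductively along the Postnikov tower of the target. Given a map of spectra realizing an isomorphism on $\pi_i$ for $i < 2n$, the obstruction to extending it over the next stage is the composite with the relevant $k$-invariant, which lives in an odd-degree group of stable cohomology operations $H^{2n+3}(bsu_{(p)};\Z_{(p)})$, while the indeterminacy of the extension is a torsor over the even-degree group $H^{2n+2}(bsu_{(p)};\Z_{(p)})$. Thus everything comes down to computing these groups of stable operations on $p$-local connective $K$-theory, together with the self-maps $[bsu_{(p)},bsu_{(p)}]$ available for adjusting a partially constructed equivalence.

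The key tool is the Adams operations. On $KU_{(p)}$ the operation $\psi^k$ with $k$ prime to $p$ acts on $\pi_{2n}$ by multiplication by $k^n$, and it restricts to a self-map of the connective cover $bsu_{(p)}$; the discrepancy between the $\oplus$- and $\otimes$-structures is itself governed by how these operations interact (the cannibalistic class comparing the two $H$-space structures). The point of Adams and Priddy is that, $p$-locally, the $\psi^k$ generate enough endomorphisms of $bsu_{(p)}$ to detect and cancel any discrepancy between the two Postnikov systems: the relevant obstruction groups are computed, via the action of the Steenrod algebra on $H^*(bsu;\Z/p)$ and the resulting $\mathrm{Ext}$/Adams spectral sequence, to be concentrated precisely in the degrees where distinct $\psi^k$ act with distinct eigenvalues $k^n$, so that any obstruction can be killed by precomposing with a suitable $\Z_{(p)}$-linear combination of Adams operations. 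Passing to the limit yields the desired infinite loop equivalence.

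The main obstacle is exactly this computation: identifying the groups of stable integral operations on $bsu_{(p)}$ and verifying that the Adams operations act with enough independence to annihilate every obstruction. This is the technical heart of \cite{AP} and rests on a detailed understanding of the ring of stable $K$-theory operations localized at $p$ together with the Steenrod-module structure of $H^*(bsu;\Z/p)$. Since for our purposes only the statement is needed, I would simply invoke \cite{AP} (equivalently \cite[Thm.~V.4.2]{May:E-infinity} or \cite[Cor.~10.3]{May:Einfty_spaces}) rather than reproduce the argument.
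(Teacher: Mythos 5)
The paper offers no proof of this theorem: it is imported verbatim from Adams--Priddy \cite{AP}, with \cite[Thm.~V.4.2]{May:E-infinity} and \cite[Cor.~10.3]{May:Einfty_spaces} indicated as alternative sources, which is exactly what your final paragraph does. Since you likewise conclude by invoking \cite{AP} rather than relying on your sketch (which, for what it is worth, recasts what in \cite{AP} is an Adams-spectral-sequence/$\mathrm{Ext}_A$ vanishing argument into a Postnikov-tower-plus-Adams-operations heuristic), your treatment coincides with the paper's.
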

Thus, the corresponding spectra $bsu_{\oplus}$ and $bsu_{\otimes}$ become equivalent as spectra on localisation at any prime $p$. This also turns out to be true for the completions at any prime $p$, but we will not need that statement.

If we are only interested in computing the groups $bsu_{\otimes}^*(X)$ and neglect naturality, then we can use the following observation: Let $X$ be a space with the homotopy type of a finite CW-complex. By Theorem \ref{thm:AP} there is a natural isomorphism $bsu_{\oplus}^*(X,\Z_{(p)})\cong bsu_{\otimes}^*(X,\Z_{(p)})$. Since $\Z_{(p)}$ is flat, by the universal coefficient theorem for generalized cohomology theories \cite{Adams:lectures_UCT}, $bsu_{\oplus}^*(X,\Z_{(p)})\cong bsu_{\oplus}^*(X)\otimes \Z_{(p)}$ and $bsu_{\otimes}^*(X,\Z_{(p)})\cong bsu_{\otimes}^*(X)\otimes \Z_{(p)}$. Two finitely generated abelian groups which are isomorphic after localization at each prime are necessarily isomorphic as is apparent from the structure theorem of such groups. Therefore, for every finite CW-complex there is a (not natural) isomorphism
\begin{equation}\label{eqn:basic}
	bsu_{\otimes}^*(X)\cong  bsu_{\oplus}^*(X).
\end{equation}


\section{Uniqueness of $gl_1$ for $K$-theory spectra}\label{Sect.5}
We have seen two ways of defining the unit spectrum of topological $K$-theory: the first one starting from the $S$-algebra $KU$ and the second one from the commutative symmetric ring spectrum $KU^\C$. The output of both constructions is a weak $\Omega$-spectrum. Our goal in this section is to compare them. By \cite[VIII.2.1]{May:E-infinity} the spectrum $ku$ represents connective topological $K$-theory, so after inverting the Bott element to obtain $KU$, this spectrum represents periodic topological $K$-theory as defined by Atiyah and Hirzebruch. Throughout this section we will use both notations, $KU^*$ and $K^*$, interchangeably to denote the cohomology theory represented by~$KU$ and similarly, $K_*$ and $KU_*$ for the corresponding homology theory. We will prove:

\begin{theorem}\label{thm:uuu}
\label{n1}
Let $F$ be a commutative symmetric ring spectrum representing complex
topological $K$-theory as a multiplicative cohomology theory on finite
CW-complexes in the sense that there exists a natural multiplicative
isomorphism $F^*(X) \cong K^*(X)$.  Suppose also that $F$ is a positive
$\Omega$-spectrum. Then there is an equivalence of weak $\Omega$-spectra
between $gl_1(F)$ and $gl_1(KU)$.
\end{theorem}

\begin{remark}
By Lemma~\ref{lem:multiplicative} the commutative symmetric ring spectra $KU^\C$, $KU^\ZZ$ and $KU^{\OO_\infty}$ satisfy the hypotheses of the theorem.
\end{remark}

\begin{corollary}\label{cor:appendix}
	There is a natural isomorphism of cohomology theories on finite CW complexes
	\[
		gl_1(KU^\C)^* (X)\cong gl_1(KU)^* (X)\ .
	\]
\end{corollary}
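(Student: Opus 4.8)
The plan is to deduce Corollary~\ref{cor:appendix} directly from Theorem~\ref{thm:uuu} by specializing to the spectrum $KU^\C$ and translating the abstract equivalence of weak $\Omega$-spectra into a statement about the associated cohomology theories. First I would observe that by the Remark immediately preceding the corollary, the commutative symmetric ring spectrum $KU^\C$ satisfies all hypotheses of Theorem~\ref{thm:uuu}: by Lemma~\ref{lem:multiplicative} it is a positive $\Omega$-spectrum representing complex topological $K$-theory as a multiplicative cohomology theory, so we may take $F = KU^\C$. Theorem~\ref{thm:uuu} then supplies an equivalence of weak $\Omega$-spectra between $gl_1(KU^\C)$ and $gl_1(KU)$, i.e.\ a chain of levelwise weak equivalences as in the definition of such an equivalence given in Section~\ref{Sect.4}.

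Next I would pass from this equivalence of spectra to the claimed isomorphism of cohomology theories. A weak $\Omega$-spectrum $(X_n)_{n\in\N_0}$ represents a cohomology theory $Y \mapsto [\Sigma^? Y, X_n]$ in the usual way, and a levelwise weak equivalence of weak $\Omega$-spectra induces an isomorphism on the represented functors on all finite CW-complexes. Since an equivalence of weak $\Omega$-spectra is by definition a finite zig-zag of such levelwise weak equivalences, composing the induced isomorphisms (and their inverses) along the zig-zag yields the desired natural isomorphism $gl_1(KU^\C)^*(X) \cong gl_1(KU)^*(X)$ for every finite CW-complex $X$. Naturality in $X$ is automatic because each levelwise map is a genuine map of spectra and hence induces a natural transformation of represented functors.

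The main point to be careful about — and the only place where anything beyond bookkeeping is needed — is that the equivalence furnished by Theorem~\ref{thm:uuu} is an equivalence of \emph{weak} $\Omega$-spectra through a zig-zag rather than a single map, so one must check that each backward-pointing arrow in the chain is a weak equivalence inducing an isomorphism (not merely an epimorphism or monomorphism) on the represented groups, which is exactly the content of the representability of cohomology theories by weak $\Omega$-spectra recalled in Section~\ref{Sect.4}. Once this is granted the corollary is immediate.

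I expect essentially all the difficulty to reside upstream, in Theorem~\ref{thm:uuu} itself, whose proof occupies the remainder of Section~\ref{Sect.5} and requires the Goerss--Hopkins obstruction theory and the construction of the intermediate spectrum $\Sigma^\infty \C P^\infty_+[b^{-1}]$. The corollary is purely a matter of specialization and of translating spectrum-level equivalences into cohomology-level isomorphisms, so the proof is short.
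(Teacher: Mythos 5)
Your proposal is correct and takes essentially the same approach as the paper: the paper offers no separate argument for Corollary~\ref{cor:appendix}, treating it exactly as you do---as an immediate specialization of Theorem~\ref{thm:uuu} to $F = KU^\C$ (legitimized by Lemma~\ref{lem:multiplicative}), with the zig-zag of levelwise weak equivalences of weak $\Omega$-spectra inducing, by Whitehead's theorem, natural isomorphisms of the represented cohomology theories on finite CW-complexes. Your added care about inverting the backward arrows in the chain is precisely the routine bookkeeping the paper leaves implicit.
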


\subsection{Change of categories.}

In the rest of this section we will work with commutative $S$-algebras
(\cite{EKMM}) rather than commutative symmetric ring spectra, because we need
to use facts from \cite[Chapters V and VIII]{EKMM} whose analogues for
symmetric spectra are not written down in the literature. In
subsections~\ref{o1}--\ref{s2} we will prove the following:


\begin{theorem}
\label{n2}
Let $G$ be a commutative $S$-algebra that represents complex
topological $K$-theory as a multiplicative cohomology theory on finite
CW-complexes. Then there is a chain of weak equivalences of
commutative $S$-algebras between $G$ and $KU$.
\end{theorem}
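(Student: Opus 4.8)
The goal is to show that any commutative $S$-algebra $G$ representing complex $K$-theory is connected to $KU$ by a chain of weak equivalences of commutative $S$-algebras. The plan is to reduce this to a statement about maps in the homotopy category that can be handled by Goerss--Hopkins obstruction theory, since the excerpt flags Section~5.2 as using exactly this machinery. Concretely, I would proceed by establishing a unital multiplicative isomorphism $G \to KU$ in the homotopy category of spectra, and then upgrading it to a chain of weak equivalences of commutative $S$-algebras by showing that the relevant obstruction groups vanish.

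First I would fix coefficients. Both $G$ and $KU$ have $\pi_* \cong \Z[\beta^{\pm 1}]$ with $|\beta| = 2$, and the multiplicative natural isomorphism $G^*(X) \cong K^*(X) \cong KU^*(X)$ forces an isomorphism of graded rings $\pi_* G \cong \pi_* KU$ sending the multiplicative unit to the unit. The first task is to promote this algebraic data to an actual map in the homotopy category. Here the natural approach is to use the intermediate spectrum $\Sigma^\infty \C P^\infty_+[b^{-1}]$ that the excerpt says is constructed in Sections~5.3--5.6: maps out of $\C P^n$ are pinned down by the multiplicative natural transformation (via line bundles and the universal property of $\C P^\infty = BU(1)$), and after inverting the Bott-type element $b$ one obtains compatible maps $\Sigma^\infty \C P^\infty_+[b^{-1}] \to G$ and $\Sigma^\infty \C P^\infty_+[b^{-1}] \to KU$ that are both equivalences onto $K$-theory. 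Comparing these yields a multiplicative, unital equivalence $G \simeq KU$ in the stable homotopy category. The point of routing through $\C P^\infty_+$ is precisely that its cohomology is large enough to detect the $K$-theory orientation but small enough that the transformation determines the map rigidly.

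Second, I would invoke Goerss--Hopkins obstruction theory to realize this homotopy-category equivalence as an honest chain of weak equivalences of commutative $S$-algebras. The obstruction theory computes the homotopy of the moduli space of $E_\infty$ (equivalently $\ComS$) realizations of the $K$-theory homotopy ring in terms of André--Quillen cohomology groups of $\pi_* KU$ as an algebra over the appropriate operad (the Dyer--Lashof/power operations on $KU_* KU$). The key input is the classical computation that $KU$ is $E_\infty$-rigid: the relevant obstruction groups, built from the cooperations $KU_* KU$ and the André--Quillen cohomology of the resulting $\theta$-algebra (or $\lambda$-algebra) structure, vanish in the degrees that matter. This rigidity is exactly what guarantees that the moduli space of commutative $S$-algebra structures on the $K$-theory spectrum is connected, so the multiplicative homotopy-category equivalence from the previous paragraph lifts to a zig-zag of weak equivalences in $\ComS$ between $G$ and $KU$.

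The main obstacle is the realization step, not the construction of the underlying multiplicative map: assembling the Goerss--Hopkins spectral sequence correctly and verifying that the André--Quillen cohomology obstruction groups for the $K$-theory power-operation algebra vanish is the technically heavy part, which is why the excerpt warns that Section~5 "requires a lot of heavy machinery." The construction of $\Sigma^\infty \C P^\infty_+[b^{-1}]$ and the maps out of it (Sections~5.3--5.6) is the device that makes the input to this obstruction theory tractable, by producing a genuinely multiplicative comparison rather than merely an abstract isomorphism of homotopy rings. Once the obstruction groups are shown to vanish, Theorem~\ref{n2} follows, and combined with the change-of-categories comparison of Section~5.1 it will yield the natural isomorphism $gl_1(KU^\C)^*(X) \cong gl_1(KU)^*(X)$ asserted in Corollary~\ref{cor:appendix}.
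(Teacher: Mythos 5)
Your proposal follows essentially the same two-step route as the paper: first a unital, multiplicative isomorphism $G \cong KU$ in the homotopy category built from the intermediate spectrum $\Sigma^\infty \C P^\infty_+[b^{-1}]$, then Goerss--Hopkins obstruction theory with obstruction groups that vanish for $K$-theory. (The paper's precise form of the ``$E_\infty$-rigidity'' you invoke is the Basterra--Richter identification \cite{BR} of the Goerss--Hopkins obstruction groups with the Gamma cohomology groups $H\Gamma^{n+1}(KU_*KU|KU_*,\Omega^n KU_*KU)$, which vanish by Baker--Richter \cite{paper:BakerRichter}; your description in terms of $\theta$-algebras and power operations belongs to the $K(n)$-local version of this machinery and is not what is used here, but that is a presentational point.)

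The genuine gap is in your last step. What \cite[Cor.~5.9]{GH} yields, applied with $E=KU$ and $A=KU_*KU$, is \emph{not} a zig-zag of weak equivalences: connectivity of the moduli space $\mathcal{TM}(A)$ produces a chain of maps of commutative $S$-algebras $G\to G_1\leftarrow G_2\to\cdots\leftarrow KU$ in which each map is only a $KU_*$-isomorphism. A $KU_*$-isomorphism need not induce an isomorphism on homotopy groups, and the intermediate objects $G_i$ need not be $KU$-local, so you cannot read off weak equivalences directly from connectivity of the moduli space, as your proposal asserts. The paper closes this gap with a further localization argument: it replaces the chain functorially by cell commutative $S$-algebras, applies $KU$-localization in the category of commutative $S$-algebras (\cite[Thm.~VIII.2.2]{EKMM}), and then uses that a $KU_*$-isomorphism between $KU$-local spectra is a weak equivalence, together with the fact that $G$ itself is $KU$-local because Lemma~\ref{f2} already shows it is weakly equivalent to $KU$ as an $S$-module. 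Your argument needs this step (or an equivalent substitute) to reach the stated conclusion.
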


In this subsection we show that Theorem \ref{n2} implies Theorem \ref{n1}.

We will use a fact about the relevant model categories. First recall that if
$\mathcal C$ is a model category then the {\it homotopy category} Ho$\mathcal C$
is obtained by inverting the weak equivalences (\cite[Section
1.2]{Hovey}), so two objects in the model category become isomorphic in the
homotopy category if they are connected by a chain of weak equivalences.

Recall that we write
$\mathcal{S}p^{\Sigma}$ for the category of symmetric spectra; we will write
$\mathrm{Com}^{\Sigma}$ for the category of commutative
symmetric ring spectra.  These categories have model category structures given
in \cite[Sections 9 and 15]{MMSS}; the weak equivalences for these model
category stuctures are the stable equivalences.

The category of $S$-modules is denoted $\mathcal{M}_S$.  We will write
$\mathrm{Com}_S$ for the category of commutative $S$-algebras (\cite[Section
II.3]{EKMM}). These are model categories (\cite[Section
VII.4]{EKMM}); the weak equivalences are the
morphisms that induce an isomorphism of $\pi_*$.

The fact we need is

\begin{proposition}
\label{n3}
There is an equivalence of categories
\[
\Upsilon:\mathrm{Ho}\mathrm{Com}^{\Sigma}
\to
\mathrm{Ho}\mathrm{Com}_S
\]
If $F\in \mathrm{Com}^{\Sigma}$ then $F$ and $\Upsilon(F)$ represent the same
cohomology theory on finite CW-complexes.
\end{proposition}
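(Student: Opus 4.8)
The plan is to obtain $\Upsilon$ as the derived equivalence of a zig-zag of monoidal Quillen equivalences between the standard models of spectra, and then to observe that because this zig-zag is compatible with the forgetful functors to the underlying (non-multiplicative) categories of spectra, it preserves the underlying stable homotopy type and hence the represented cohomology theory. Thus the proof splits into the construction of $\Upsilon$ and the verification of the compatibility clause.

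For the construction of $\Upsilon$, I would assemble the comparison in two stages, passing through the intermediate category of orthogonal spectra. The first stage is the Quillen equivalence between symmetric spectra and orthogonal spectra of \cite{MMSS}; the second is the Quillen equivalence between orthogonal spectra and $S$-modules of Mandell and May. In each stage the adjoint pair is a weak symmetric monoidal Quillen equivalence, so, after passing to the \emph{positive} stable model structures, on which the categories of commutative monoids inherit transferred model structures and cofibrant commutative monoids have the correct underlying homotopy type, the equivalence descends to the associated categories of commutative monoids. This yields a chain of Quillen equivalences
\[
\mathrm{Com}^{\Sigma} \rightleftarrows \mathrm{Com}(\text{orthogonal spectra}) \rightleftarrows \mathrm{Com}_S,
\]
and I would define $\Upsilon$ to be the composite of the induced total derived functors on homotopy categories, which is then an equivalence of categories. (Alternatively, one could invoke Schwede's direct monoidal comparison of symmetric spectra with $S$-modules, but the two-stage route is easier to cite cleanly.)

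To prove that $F$ and $\Upsilon(F)$ represent the same cohomology theory, I would use that each comparison adjunction commutes, up to natural weak equivalence, with the forgetful functor to the underlying category of spectra ($\mathcal{S}p^{\Sigma}$, orthogonal spectra, or $\mathcal{M}_S$ respectively). Consequently the underlying $S$-module of $\Upsilon(F)$ is stably equivalent to the underlying symmetric spectrum of $F$ under the equivalence $\mathrm{Ho}\,\mathcal{S}p^{\Sigma}\simeq\mathrm{Ho}\,\mathcal{M}_S$. Since that equivalence is compatible with the sphere spectrum and with suspension, it preserves the functor $X\mapsto [\Sigma^{\infty}X_+,\Sigma^n(-)]$ that computes the represented cohomology theory on finite CW-complexes for each $n$, so $F$ and $\Upsilon(F)$ represent naturally isomorphic theories.

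The main obstacle is exactly the lift of the Quillen equivalences from the underlying spectrum categories to the categories of \emph{commutative} monoids: unlike associative ring spectra, commutative monoids are not homotopically well-behaved in the naive stable model structure, so one must work throughout with the positive model structures and verify both that the transferred model structures exist and that the monoidal comparison functors remain Quillen equivalences there. A secondary point that requires care is checking that the square relating each comparison functor to its forgetful functor commutes up to weak equivalence at the level of commutative monoids, and not merely on underlying spectra, since it is this compatibility that ultimately identifies the two represented cohomology theories.
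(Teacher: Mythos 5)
Your proposal is correct and follows essentially the same route as the paper: both factor $\Upsilon$ through the category of commutative orthogonal ring spectra, using the comparison of \cite{MMSS} for the first stage and that of Mandell--May \cite{MM} for the second, with the cohomology-theory clause coming from the compatibility of these comparisons with the underlying spectra. The paper simply cites the packaged statements (\cite[Thm.~0.7]{MMSS}, \cite[Thm.~1.5]{MM}, \cite[Thm.~7.13]{MM}), whereas you sketch the proofs behind them (positive model structures, transferred structures on commutative monoids, forgetful-functor compatibility), but the mathematical content is the same.
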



\begin{proof}[Proof of Proposition \ref{n3}]
Let $\mathrm{Com}^\mathcal{I}$ be the category of commutative orthogonal
ring spectra (\cite[Example 4.4]{MMSS}).  By \cite[Theorem 0.7]{MMSS} there
is an equivalence of categories
\[
\Upsilon_1:\mathrm{Ho}\mathrm{Com}^{\Sigma}
\to
\mathrm{Ho}\mathrm{Com}^{\mathcal{I}}
\]
with the property that $F$ and $\Upsilon_1(F)$ represent the same
cohomology theory on finite CW-complexes.  By \cite[Theorem 1.5]{MM} there
is an equivalence of categories
\[
\Upsilon_2:\mathrm{Ho}\mathrm{Com}^{\mathcal{I}}
\to
\mathrm{Ho}\mathrm{Com}_S,
\]
and by \cite[Theorem 7.13]{MM} $F$ and $\Upsilon_2(F)$ represent the same
cohomology theory on finite CW-complexes.
\end{proof}

Before we continue we explain how Theorem~\ref{n2} implies Theorem~\ref{n1}.
Let $F$ be as in Theorem~\ref{n1} and note that $\Upsilon(F)$ is a commutative
$S$-algebra representing $K$-theory as a multiplicative cohomology theory.
The functor $\Upsilon$ is the composition of the functors
$\mathbf{L}\mathbb{P}$ and $\mathbf{L}\mathbb{N}$ in \cite[Prop.~13.9]{paper:LindUnits}
and \cite[Prop.~14.1]{paper:LindUnits}, respectively. Thus, combining these
two propositions we obtain an equivalence of weak $\Omega$-spectra
\[
	gl_1(\Upsilon(F_c)) \simeq gl_1(F)
\]
where $F_c$ is a cofibrant replacement of $F$ as a commutative symmetric
ring spectrum (which still represents $K$-theory as a multiplicative
cohomology theory). By Theorem~\ref{n2} there is a chain of
weak equivalences of commutative $S$-algebras between $\Upsilon(F_c)$
and $KU$. Since $gl_1$ preserves weak equivalences, it gives a chain
of equivalences of weak $\Omega$-spectra
\[
	gl_1(F) \simeq gl_1(\Upsilon(F_c)) \simeq gl_1(KU)\ .
\]

\subsection{Obstruction theory.}

\label{o1}

In subsections \ref{s1}--\ref{s2} we will prove:

\begin{lemma}
\label{f2}
Let $G$ be a commutative $S$-algebra representing complex
topological $K$-theory as a multiplicative cohomology theory on finite
CW-complexes.
Then there is an isomorphism from $G$ to
$KU$ in the homotopy category $\mathrm{Ho}\mathcal{M}_S$  which makes
the diagrams
\vspace*{2mm}
\[
\begin{tikzcd}
G\wedge G \ar[r] \ar[d] & KU\wedge KU \ar[d]\\
G \ar[r] & KU
\end{tikzcd}
\]
and
\[
\begin{tikzcd}
& S^0\ar[ld] \ar[rd] & \\
G\ar[rr] & & KU
\end{tikzcd}
\]
commute in $\mathrm{Ho}\mathcal{M}_S$.
\end{lemma}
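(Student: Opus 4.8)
\emph{Approach.} The plan is \textbf{not} to compare $G$ and $KU$ directly, because a map of spectra $G\wedge G\to KU$ need not be detected by its effect on the cohomology of finite complexes — phantom maps obstruct this — but instead to route both spectra through the Snaith model $KU\simeq \Sigma^\infty\C P^\infty_+[b^{-1}]$, in which the ring structure arises from the tensor-product $H$-space structure $m\colon\C P^\infty\times\C P^\infty\to\C P^\infty$ on $\C P^\infty=BU(1)$ and $b\in\pi_2$ is the Bott class. The decisive feature is that $\C P^\infty$, its self-products, and the Bott telescope all have vanishing odd $K$-theory and Mittag--Leffler even $K$-theory, so the relevant $\lim^1$-groups vanish and maps out of them into $G$ or $KU$ are faithfully recorded by what they do on the finite skeleta $\C P^n$, where the given multiplicative isomorphism $\varphi\colon G^*(-)\cong K^*(-)$ pins them down. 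I would construct multiplicative, unital equivalences from this intermediate spectrum to both $G$ and $KU$ and then compose.

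\emph{Transporting the canonical class.} First I would transport the canonical generator. In $K^0(\C P^\infty)$ the class $[L]$ of the tautological line bundle is \emph{grouplike}, $m^*[L]=[L]\times[L]$ (external product, because $m$ classifies the tensor product of line bundles), and restricts to $1\in K^0(\mathrm{pt})$. Setting $x:=\varphi^{-1}([L])\in G^0(\C P^\infty)$ and using that $\varphi^{-1}$ is natural and multiplicative gives $m^*x=\varphi^{-1}(m^*[L])=\varphi^{-1}([L]\times[L])=x\times x$ and $x|_{\mathrm{pt}}=1$; here I use that $\varphi$ induces isomorphisms $G^*(\C P^n)\cong K^*(\C P^n)$ and $G^*(\C P^n\times\C P^m)\cong K^*(\C P^n\times\C P^m)$ compatibly, and that the vanishing of the relevant $\lim^1$ lets these identifications pass to the infinite products. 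Grouplikeness and unitality are exactly the conditions that make $x$ classify a map of ring spectra $\Sigma^\infty\C P^\infty_+\to G$ in $\mathrm{Ho}\mathcal{M}_S$.

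\emph{Inverting Bott and assembling.} Next I would invert the Bott class. Because $\varphi$ is a ring isomorphism on coefficients, $x$ carries $b\in\pi_2\Sigma^\infty\C P^\infty_+$ to a generator of $\pi_2 G\cong\Z$, hence to a unit of $\pi_*G\cong\pi_*KU$; by the universal property of the Bott localization the ring map extends to $s_G\colon\Sigma^\infty\C P^\infty_+[b^{-1}]\to G$, and since it preserves $1$ and $b$ while $\varphi$ is an isomorphism on coefficients, $s_G$ is an isomorphism on $\pi_*$, i.e.\ an equivalence in $\mathrm{Ho}\mathcal{M}_S$. Running the same construction with $G=KU$ and $\varphi=\mathrm{id}$ recovers the Snaith equivalence $s\colon\Sigma^\infty\C P^\infty_+[b^{-1}]\to KU$, which is likewise a multiplicative, unital equivalence. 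I would then set $f:=s\circ s_G^{-1}\colon G\to KU$; as a composite of multiplicative, unital equivalences in $\mathrm{Ho}\mathcal{M}_S$ it is an isomorphism making both the multiplication square and the unit triangle commute.

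\emph{Main obstacle.} The hard part is not the formal assembly but justifying the two structural inputs on which it rests. First, one must actually construct the intermediate spectrum $\Sigma^\infty\C P^\infty_+[b^{-1}]$ inside $\mathcal{M}_S$ and prove the Snaith-type identification of it with $KU$ as a ring object, together with the universal property of the Bott localization used to extend $s_G$ to the telescope. Second, one must verify that the grouplike relation $m^*x=x\times x$, which holds a priori only as an identity of cohomology classes, really promotes to homotopy-commutativity of the multiplication square for $s_G$; this is where the vanishing of $\lim^1$ for the even $K$-theory of the relevant products of $\C P^\infty$ (and of the Bott telescope) is essential, since it is precisely what removes the phantom ambiguity that blocks the naive direct comparison of $G$ and $KU$. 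Carrying out this bookkeeping on the finite $\C P^n$ and passing to the colimit is the technical core, and it is the reason the intermediate spectrum is introduced.
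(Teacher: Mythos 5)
Your proposal is correct and is essentially the paper's own proof: the paper likewise constructs the telescope $\Sigma^\infty\C P^\infty_+[b^{-1}]$ with product induced by the $H$-space structure of $\C P^\infty$, builds unital, multiplicative weak equivalences $\eta$ and $\theta$ to $KU$ and to $G$ out of the tautological class (the latter transported through the given multiplicative natural isomorphism), verifies multiplicativity and unitality by restricting to the skeleta $\C P^n$ and invoking Adams' $\lim^1$/uniqueness results exactly where you invoke Mittag--Leffler vanishing. The desired isomorphism in $\mathrm{Ho}\mathscr{M}_S$ is the composite $\eta\circ\theta^{-1}$, which is precisely your $s\circ s_G^{-1}$.
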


In this subsection we use Lemma \ref{f2} to prove Theorem \ref{n2}.

We use the obstruction theory of Goerss and Hopkins \cite{GH},
specifically
Corollary 5.9 of \cite{GH} with $E=KU$ and $A=KU_*KU$.%
\footnote{Section 1 of
\cite{GH} explains that this obstruction theory applies to the category of
commutative $S$-algebras.}

First we consider the obstruction groups in that Corollary:
\[
D^{n+1}_{E_*T/E_*E}(A,\Omega^n A).
\]
According to \cite[Theorem 2.6]{BR}, these groups are isomorphic to the
Gamma cohomology groups
\[
H\Gamma^{n+1}(A|E_*,\Omega^n A),
\]
and according to \cite[Theorem 6.2]{paper:BakerRichter} these groups
(with our choice of $A$ and $E$) are 0.

Next we observe that $KU_* G$ is isomorphic to $KU_*KU$ as a
commutative algebra in the category of $KU_*KU$ comodules; this is
immediate from Lemma \ref{f2} and the definition of the comodule structures
(\cite[page 281] {Adams_Stable}).

Now \cite[Corollary 5.9]{GH} and the definition of $\mathcal{TM}(A)$ on page
183 of \cite{GH} give a diagram of commutative $S$-algebras
\begin{equation}
\label{f3}
G\to G_1 \leftarrow G_2 \to \cdots \leftarrow KU
\end{equation}
in which each map is a $KU_*$ isomorphism.

Next we apply $KU$-localization (see \cite[Sections VIII.1 and VIII.2]{EKMM}).
Theorem VIII.2.2 of \cite{EKMM} says that if $H$ is a cell commutative
$S$-algebra (see \cite[Definition VII.4.11]{EKMM}) then there is a
$KU$-localization $\lambda\colon H\to \overline{H}$ with the property that $\overline{H}$ is a
commutative $S$-algebra and $\lambda$ is a map of commutative
$S$-algebras.  In order to apply this theorem we need to know that for every
commutative $S$-algebra $J$ there is a cell commutative $S$-algebra $CJ$ and a
weak equivalence of commutative $S$-algebras $\kappa \colon CJ\to J$, and moreover this
construction gives a functor $C \colon \mathrm{Com}_S\to \mathrm{Com}_S$ and a natural
transformation $\kappa$ from $C$ to the identity functor (see \cite[Lemma
5.8]{MMSS} and \cite[Lemma VII.5.8]{EKMM}).

Now consider the diagram
\begin{equation} \label{f4}
\begin{tikzcd}
	G \ar[r] & G_1 & G_2 \ar[r] \ar[l] & \cdots & \ar[l] KU \\
	CG \ar[r] \ar[d, "\lambda" left] \ar[u, "\kappa"] & CG_1 \ar[d,"\lambda" left] \ar[u,"\kappa"] &
	CG_2 \ar[d,"\lambda" left] \ar[u,"\kappa"] \ar[r] \ar[l] &  \cdots & \ar[l] \ar[d,"\lambda" left] \ar[u,"\kappa"] CKU \\
	\overline{CG} \ar[r] & \overline{CG_1} & \overline{CG_2} \ar[r] \ar[l] &  \cdots & \ar[l] \overline{CKU}
\end{tikzcd}
\end{equation}
The maps in the third row are given by Theorem VIII.2.2 of \cite{EKMM}, and the
lower half of the diagram homotopy commutes.
Because the $\kappa$ are weak equivalences, all the maps in the second row are
$KU_*$-isomorphisms, and
because the $\lambda$ are $KU_*$-isomorphisms, all
the maps in the third row are $KU_*$-isomorphisms.  Then the maps in the third
row are weak equivalences, because a
$KU_*$-isomorphism between $KU$-local spectra is a weak equivalence.
The map $\lambda \colon CKU\to\overline{CKU}$ is also a weak equivalence, because $KU$ is
$KU$-local, and
Lemma \ref{f2} implies that $G$ is weakly equivalent (as an $S$-module) to $KU$,
and therefore $CG$ is $KU$-local and
the map $\lambda\colon CG\to \overline{CG}$ is a weak equivalence.
Now the diagram gives the chain of weak equivalences promised by Theorem~\ref{n2}.

\subsection{The $S$-module $\Sigma^\infty \mathbb{C}P^\infty_+[b^{-1}]$}
\label{s1}

Let $\xi$ be the canonical complex line bundle over $\mathbb{C}P^\infty$.  The
classifying map for $\xi$ is a homotopy equivalence from $\mathbb{C}P^\infty$ to the
classifying space $BS^1$.
The
multiplication of $S^1$ induces an associative and commutative multiplication on
$BS^1$ and this gives a homotopy associative and homotopy commutative
multiplication on $\mathbb{C}P^\infty$.  This in turn gives a homotopy associative and
homotopy commutative multiplication on the $S$-module $\Sigma^\infty
\mathbb{C}P^\infty_+$ (where $+$ denotes a disjoint basepoint).

Next recall that if $X$ is a based CW complex there is a natural isomorphism in
the homotopy category $\mathrm{Ho}\mathcal{M}_S$
\[
\nu\colon \Sigma^\infty X_+\to
\Sigma^\infty S^0
\vee
\Sigma^\infty X
\]
for which the composite
\[
\Sigma^\infty X_+\xrightarrow{\nu} \Sigma^\infty X \vee \Sigma^\infty S^0
\to \Sigma^\infty S^0
\]
is induced by the based map
\[
p \colon X_+\to S^0
\]
which takes $X$ to the non-basepoint,
and the composite
\[
\Sigma^\infty X_+\xrightarrow{\nu}
\Sigma^\infty S^0
\vee
\Sigma^\infty X
\to \Sigma^\infty X
\]
is induced by the based map
\[
q \colon X_+\to X
\]
which is the identity on $X$.  The inverse to $\nu$ is the map
\[
\Sigma^\infty S^0
\vee
\Sigma^\infty X
\xrightarrow{\Sigma^\infty p'\vee Q}
\Sigma^\infty X_+
\]
where $p'$ takes the non-basepoint of $S^0$ to the basepoint of $X$ and
$Q\circ \Sigma^\infty q$ is the map $1-\Sigma^\infty(p'\circ p)$.

Let
\[
a\colon S^2\to \mathbb{C}P^\infty
\]
be the inclusion of the 2-skeleton and let
\[
b \colon \Sigma^\infty S^2\to \Sigma^\infty \mathbb{C}P^\infty_+
\]
be the composite
\[
\Sigma^\infty S^2\xrightarrow{\Sigma^\infty a}
\Sigma^\infty \mathbb{C}P^\infty
\xrightarrow{Q}
\Sigma^\infty \mathbb{C}P^\infty_+
\]

Let us define the $S$-module $\Sigma^\infty
\mathbb{C}P^\infty_+[b^{-1}]$ to be the telescope
\[
\text{Tel } \Sigma^{-2n}\Sigma^\infty \mathbb{C}P^\infty_+
\]
where the map
\[
\Sigma^{-2n} \Sigma^\infty
\mathbb{C}P^\infty_+
\to
\Sigma^{-2n-2} \Sigma^\infty
\mathbb{C}P^\infty_+
\]
is left multiplication by $\Sigma^{-2}b\colon S^0\to \Sigma^{-2} \Sigma^\infty
\mathbb{C}P^\infty_+$
(cf.\ \cite[page 94]{EKMM}).

\begin{lemma}
\label{p1}
$
\pi_* (\Sigma^\infty
\mathbb{C}P^\infty_+[b^{-1}])
$
is 0 in odd degrees and in even degree $2n$ is a
copy of $\mathbb Z$ generated by $b^n$.
\end{lemma}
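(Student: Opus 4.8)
The plan is to pass to homotopy groups, exploit the ring structure to cut the problem down to $\pi_0$ and $\pi_1$, and then pin those two groups down by comparison with complex $K$-theory. Write $T := \Sigma^\infty \mathbb{C}P^\infty_+[b^{-1}]$. First I would use that homotopy groups commute with a sequential homotopy colimit, so that
\[
\pi_k(T)\;\cong\;\operatorname*{colim}_n \pi_{k+2n}(\Sigma^\infty \mathbb{C}P^\infty_+)\;=\;\operatorname*{colim}_n \pi^s_{k+2n}(\mathbb{C}P^\infty_+),
\]
where each transition map is multiplication by $b$ for the homotopy–commutative ring structure on $\Sigma^\infty \mathbb{C}P^\infty_+$ coming from the abelian multiplication on $BS^1\simeq\mathbb{C}P^\infty$. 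By construction $b\in\pi_2$ is invertible in $T$, so multiplication by $b$ is an isomorphism $\pi_k(T)\xrightarrow{\cong}\pi_{k+2}(T)$. Hence it suffices to show $\pi_0(T)\cong\mathbb{Z}$, generated by the unit $1=b^0$, and $\pi_1(T)=0$; the asserted description $\pi_{2n}(T)=\mathbb{Z}\cdot b^n$ and $\pi_{2n+1}(T)=0$ then follows by transporting along the isomorphism $b^n\cdot(-)$.

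To access these two colimits I would bring in the canonical multiplicative comparison with $K$-theory. The canonical line bundle $\xi$ over $\mathbb{C}P^\infty$ is a unit in $KU^0(\mathbb{C}P^\infty)$, and since tensoring line bundles corresponds to the multiplication on $BS^1$, the class $[\xi]$ defines a map of homotopy ring spectra $\phi\colon\Sigma^\infty \mathbb{C}P^\infty_+\to KU$ carrying $b$ to the Bott element $\beta$, a generator of $\pi_2(KU)=\mathbb{Z}$. As $\beta$ is invertible in $KU_*$, the map $\phi$ factors through the telescope to give $\bar\phi\colon T\to KU$. On the colimit computing $\pi_0(T)$ the induced $\bar\phi_*\colon\pi_0(T)\to\pi_0(KU)=\mathbb{Z}$ sends $b^n\mapsto\beta^n$, a generator in each even degree; in particular the unit $1$ maps to $1$, so $\bar\phi_*$ is split surjective and $b^0$ has infinite order. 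Thus $\mathbb{Z}$ is a retract of $\pi_0(T)$, and what remains is the reverse inclusion together with the vanishing of $\pi_1(T)$.

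The main obstacle is exactly this reverse direction, and the first thing to stress is that the naive homological route does \emph{not} work. Since $H\mathbb{F}_p$-homology also commutes with the telescope, and the transition map on $\tilde H_*(\mathbb{C}P^\infty_+;\mathbb{F}_p)$ is the Pontryagin product by the degree-$2$ divided-power class $\gamma_1$, sending $\gamma_i\mapsto(i+1)\gamma_{i+1}$, every class is eventually killed (because $p\mid(i+1)$ for infinitely many $i$); hence $H_*(T;\mathbb{F}_p)=0$ for all primes $p$, while the same computation over $\mathbb{Z}$ gives $H_*(T;\mathbb{Z})\cong\mathbb{Q}$ in even degrees and $0$ in odd degrees. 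So $T$ is $H\mathbb{F}_p$-acyclic for every $p$ (just as $KU$ is), the classical Adams spectral sequence collapses to zero, and ordinary homology can distinguish neither $\pi_*(T)=\mathbb{Z}[b^{\pm1}]$ from a rational answer nor $\bar\phi$ from a mere rational equivalence. The equivalence must therefore be detected by $K$-theory itself.

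Consequently I would finish by proving that $\bar\phi$ is a $KU$-homology isomorphism, i.e.\ by working $K(1)$-locally: a $KU_*$-isomorphism between $KU$-local spectra is a weak equivalence, and $KU$ is $KU$-local, so this forces $\bar\phi$ to be a weak equivalence. This last step is precisely Snaith's theorem $\Sigma^\infty \mathbb{C}P^\infty_+[b^{-1}]\simeq KU$, and the concrete work consists of computing $KU_*(\mathbb{C}P^\infty_+)$ together with the $b$-action and checking that $\ker(\bar\phi_*)$ dies in the colimit; this is the genuinely hard part of the argument (and could alternatively be imported by citing Snaith). Granting it, one obtains $\pi_0(T)\cong\mathbb{Z}$ generated by $1$ and $\pi_1(T)=0$, whence $\pi_*(T)$ is $0$ in odd degrees and is the infinite cyclic group on $b^n$ in each even degree $2n$, as claimed.
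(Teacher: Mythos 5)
Your proof, in the branch where you cite Snaith, is correct and ultimately rests on exactly the same input as the paper's own proof: the paper simply combines the identification $\pi_*\bigl(\Sigma^\infty\mathbb{C}P^\infty_+[b^{-1}]\bigr)\cong\bigl(\pi_*\Sigma^\infty\mathbb{C}P^\infty_+\bigr)[b^{-1}]$ (from EKMM) with Snaith's computation of this localized ring, and everything you add on top of that citation --- the reduction to $\pi_0$ and $\pi_1$ via invertibility of $b$, the multiplicative comparison map $\bar\phi$ (which is exactly the paper's map $\eta$, constructed in the subsection immediately following the lemma), the split surjectivity on even homotopy, and the observation that the mod $p$ homology of the telescope vanishes --- is correct but redundant once Snaith is invoked. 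So read as ``reduce to Snaith and cite it,'' your argument and the paper's coincide.

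The one point that needs flagging is your sketched self-contained alternative. The step ``$\bar\phi$ is a $KU_*$-isomorphism between $KU$-local spectra, hence a weak equivalence'' is circular as stated: $KU$ is $KU$-local, but you have no way of knowing that the telescope $T=\Sigma^\infty\mathbb{C}P^\infty_+[b^{-1}]$ is $KU$-local before you know Snaith's theorem, and telescopes are not local in general (the sphere spectrum itself is not $KU$-local). Without locality of the source, a $KU_*$-isomorphism only computes $\pi_*(L_{KU}T)$, which says nothing about $\pi_*(T)$; the potential discrepancy between the two is exactly the kind of phenomenon your own homology computation shows cannot be detected by ordinary homology. This is precisely why the paper, when it runs the same local-equivalence argument in the proof of Theorem \ref{n2}, first uses Lemma \ref{f2} to know that $G$ is weakly equivalent to $KU$ as an $S$-module before concluding that its cell replacement is $KU$-local. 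So in your write-up the citation of Snaith is not an optional convenience; it carries the entire content of the lemma.
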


\begin{proof}
$
\pi_* (\Sigma^\infty
\mathbb{C}P^\infty_+[b^{-1}])
$
is isomorphic to $ (\pi_* \Sigma^\infty \mathbb{C}P^\infty_+)[b^{-1}]$
(cf.\ \cite[bottom of page 94]{EKMM}), so the lemma follows from a theorem of
Snaith
\cite[Theorem 2.12 and equation 2.4]{paper:Snaith}.
\end{proof}

\subsection{Equivalences from $\Sigma^\infty\mathbb{C}P^\infty_+[b^{-1}]$ to $KU$ and $G$.}
First we construct a weak equivalence
\[
\eta\colon \Sigma^\infty\mathbb{C}P^\infty_+[b^{-1}]\to KU
\]

The canonical bundle $\xi$ over $\mathbb{C}P^\infty$ restricts to a bundle
$\xi_n$ over $\mathbb{C}P^n$, which gives a sequence of elements
$x_n\in K^0(\mathbb{C}P^n)$, and this sequence gives an element $x\in
K^0(\mathbb{C}P^\infty)$ by \cite[Proposition III.8.1 and Exercise (ii) on page
222]{Adams_Stable}. $x$ gives a map
\[
\bar{x} \colon\Sigma^\infty\mathbb{C}P^\infty_+\to KU
\]

\begin{lemma}
\label{o2}
The composite
\[
\Sigma^\infty S^2 \xrightarrow{b}
\Sigma^\infty\mathbb{C}P^\infty_+
\xrightarrow{\bar{x}}
KU
\]
is $\pm \beta$, where $\beta$ is the Bott element.
\end{lemma}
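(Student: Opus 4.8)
The plan is to reduce the statement to a computation in reduced topological $K$-theory, using the standard adjunction identifications $[\Sigma^\infty Y_+, KU] \cong K^0(Y)$ and $[\Sigma^\infty Y, KU] \cong \widetilde{K}^0(Y)$ for a based space $Y$. Under these identifications the map $\bar{x}$ corresponds by construction to the class $x = [\xi] \in K^0(\mathbb{C}P^\infty)$, while the target group $[\Sigma^\infty S^2, KU]$ is $\widetilde{K}^0(S^2) \cong \pi_2(KU)$, the infinite cyclic group generated by the Bott element $\beta$. So everything comes down to tracking what $x$ becomes after precomposition with $Q$ and then with $\Sigma^\infty a$.

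First I would use the natural splitting $\nu \colon \Sigma^\infty \mathbb{C}P^\infty_+ \to \Sigma^\infty S^0 \vee \Sigma^\infty \mathbb{C}P^\infty$ recalled above. Since $Q$ is precisely the restriction of $\nu^{-1}$ to the reduced wedge summand $\Sigma^\infty \mathbb{C}P^\infty$, precomposing $\bar{x}$ with $Q$ picks out exactly the reduced part of $x$; that is, $\bar{x} \circ Q \colon \Sigma^\infty \mathbb{C}P^\infty \to KU$ represents the reduced class $\widetilde{x} = [\xi] - 1 \in \widetilde{K}^0(\mathbb{C}P^\infty)$. Next, precomposition with $\Sigma^\infty a$ is induced by the map of spaces $a \colon S^2 \to \mathbb{C}P^\infty$, so it is the restriction homomorphism $a^* \colon \widetilde{K}^0(\mathbb{C}P^\infty) \to \widetilde{K}^0(S^2)$. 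As $a$ is the inclusion of the $2$-skeleton $\mathbb{C}P^1 = S^2$, we have $a^*\xi = \xi_1$, the tautological line bundle on $\mathbb{C}P^1$, whence $\bar{x} \circ b = a^*\widetilde{x} = [\xi_1] - 1 \in \widetilde{K}^0(S^2)$.

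It then remains to identify $[\xi_1] - 1$ with the Bott element under $\widetilde{K}^0(S^2) \cong \pi_2(KU)$. This is essentially the definition of $\beta$: the Bott generator of $\widetilde{K}^0(S^2)$ is $\pm([\xi_1]-1)$, the sign depending on the chosen orientation of $S^2 = \mathbb{C}P^1$ and on the conventions implicit in the splitting $\nu$ and in the construction of $b$. The main point to get right is therefore the bookkeeping of reduced versus unreduced $K$-theory and the precise role of $Q$ in the splitting; the residual sign is exactly what forces the conclusion to be $\pm\beta$ rather than $\beta$, and I would not attempt to pin it down.
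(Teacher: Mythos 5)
Your proposal is correct and follows essentially the same route as the paper's proof: both use the defining relation $q^*Q^* = \mathrm{id} - p^*(p')^*$ coming from the splitting $\nu$ to identify the composite with the rank-zero class $[\xi_1]-1$, which generates $\widetilde{K}^0(S^2)$ and is therefore $\pm\beta$. The only (immaterial) difference is order of operations: you compute $Q^*(x)=[\xi]-1$ on $\mathbb{C}P^\infty$ and then restrict along $a$, whereas the paper first restricts to the $2$-skeleton and then computes $Q^*$ of $x_2$ there.
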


\begin{proof}
It suffices to show that the element of $\tilde{K}^0(S^2)$ represented by this
composite is a generator.  With the notation of Subsection \ref{s1}, it
suffices to show that $Q^*$ takes the class $x_2$ to a
generator of $\tilde{K}^0(S^2)$.  In the direct sum diagram
\[
\begin{tikzcd}[column sep=1.4cm]
	  \tilde{K}^0(S^2) \arrow[r,shift left=2pt, "q^*" above]
	& \tilde{K}^0(S^2_+) \arrow[l,shift left=2pt, "Q^*" below]
	  \arrow[r, shift right=2pt, "(p')^*" below]
	& \tilde{K}^0(S^0) \arrow[l, shift right=2pt, "p^*" above]
\end{tikzcd}
\]
we have
\[
q^*Q^*(x_2)=x_2-p^*(p')^*(x_2)=x_2-1
\]
Now $q^*$ maps isomorphically to the image of $\text{id}-p^*(p')^*$ and $x_2-1$
generates the image of $\text{id}-p^*(p')^*$, so $Q^*(x_2)$ is a generator as
required.
\end{proof}

Using this lemma, there is a map
\[
\text{Tel } \Sigma^{-2n}\Sigma^\infty \mathbb{C}P^\infty_+
\to
KU
\]
whose restriction to $\Sigma^{-2n}\Sigma^\infty \mathbb{C}P^\infty_+$ is
the composite
\[
\Sigma^{-2n}\Sigma^\infty \mathbb{C}P^\infty_+
\xrightarrow{\Sigma^{-2n}\bar{x}}
\Sigma^{-2n}KU
\xrightarrow{
\beta^{-n}\cdot}
KU
\]
for each $n$,
and moreover this map is unique up to
homotopy by \cite[Proposition III.8.1 and Exercise (ii) on page
222]{Adams_Stable}.  Define
\[
\eta \colon \Sigma^\infty\mathbb{C}P^\infty_+[b^{-1}]\to KU
\]
to be this map.

Next, using the
fact that $G$ represents $K$-theory on finite
CW-complexes, the argument that constructed $\eta$
gives a map
\[
\bar{y} \colon\Sigma^\infty\mathbb{C}P^\infty_+\to G
\]
and a
weak equivalence
\[
\theta \colon \Sigma^\infty\mathbb{C}P^\infty_+[b^{-1}]\to G
\]

\subsection{A product map for $\mathbb{C}P^\infty_+[b^{-1}]$}

In this subsection we construct a map
\[
\mu: \Sigma^\infty
\mathbb{C}P^\infty_+[b^{-1}]
\wedge
\Sigma^\infty
\mathbb{C}P^\infty_+[b^{-1}]
\to
\Sigma^\infty
\mathbb{C}P^\infty_+[b^{-1}]
\]
in $\mathrm{Ho}\mathcal{M}_S$.

First observe that there is a weak equivalence
\[
\text{Tel } ((\Sigma^{-2n} \Sigma^\infty
\mathbb{C}P^\infty_+) \wedge (\Sigma^{-2n} \Sigma^\infty
\mathbb{C}P^\infty_+))
\xrightarrow{\simeq}
(\text{Tel } \Sigma^{-2n} \Sigma^\infty
\mathbb{C}P^\infty_+)\wedge
(\text{Tel } \Sigma^{-2n} \Sigma^\infty
\mathbb{C}P^\infty_+)
\]
induced by the diagonal map of the unit interval.  Next observe that,
because the multiplication of
$\Sigma^\infty
\mathbb{C}P^\infty_+$ is homotopy associative and homotopy commutative,
there is a map
\[
m: \text{Tel } ((\Sigma^{-2n} \Sigma^\infty
\mathbb{C}P^\infty_+) \wedge (\Sigma^{-2n} \Sigma^\infty
\mathbb{C}P^\infty_+))
\to
\text{Tel } \Sigma^{-4n} \Sigma^\infty
\mathbb{C}P^\infty_+
\]
whose restriction to $(\Sigma^{-2n} \Sigma^\infty
\mathbb{C}P^\infty_+) \wedge (\Sigma^{-2n} \Sigma^\infty
\mathbb{C}P^\infty_+)$ is the composite
\[
(\Sigma^{-2n} \Sigma^\infty
\mathbb{C}P^\infty_+) \wedge (\Sigma^{-2n} \Sigma^\infty
\mathbb{C}P^\infty_+)
\to
\Sigma^{-4n} (\mathbb{C}P^\infty_+ \wedge \mathbb{C}P^\infty_+)
\to
\Sigma^{-4n} \Sigma^\infty
\mathbb{C}P^\infty_+
\]
for each $n$.  Moreover, up to homotopy there is only one map $m$ with this
property, by
\cite[Proposition III.8.1 and Exercise (ii) on page
222]{Adams_Stable} (using the fact that multiplication by $\beta$ is a weak
equivalence on the target of $m$).
Now let $\mu$ be the
composite
\begin{multline*}
\Sigma^\infty
\mathbb{C}P^\infty_+[b^{-1}]
\wedge
\Sigma^\infty
\mathbb{C}P^\infty_+[b^{-1}]
=
(\text{Tel } \Sigma^{-2n} \Sigma^\infty
\mathbb{C}P^\infty_+)\wedge
(\text{Tel } \Sigma^{-2n} \Sigma^\infty
\mathbb{C}P^\infty_+)
\simeq
\\
\text{Tel } ((\Sigma^{-2n} \Sigma^\infty
\mathbb{C}P^\infty_+) \wedge (\Sigma^{-2n} \Sigma^\infty
\mathbb{C}P^\infty_+))
\xrightarrow{m}
\text{Tel } \Sigma^{-4n} \Sigma^\infty
\mathbb{C}P^\infty_+
\simeq
\Sigma^\infty
\mathbb{C}P^\infty_+[b^{-1}]
\end{multline*}

\subsection{Proof of Lemma \ref{f2}}
\label{s2}

To prove Lemma \ref{f2} it suffices to show that the diagrams
\begin{equation} \label{p2}
\begin{tikzcd}
& \Sigma^\infty S^0 \ar[ld] \ar[rd] & \\
\Sigma^\infty\mathbb{C}P^\infty_+[b^{-1}] \ar[rr, "\simeq" below, "{\eta}" above] & & KU
\end{tikzcd}
\end{equation}
\begin{equation} \label{p3}
\begin{tikzcd}
\Sigma^\infty\mathbb{C}P^\infty_+[b^{-1}] \wedge
\Sigma^\infty\mathbb{C}P^\infty_+[b^{-1}]
\ar[rr, "\eta \wedge\eta" above, "\simeq" below]
\ar[d, "\mu"]
&&
KU\wedge KU
\ar[d]
\\
\Sigma^\infty\mathbb{C}P^\infty_+
\ar[rr, "\eta" above, "\simeq" below]
&&
KU
\end{tikzcd}
\end{equation}
\begin{equation} \label{p4}
\begin{tikzcd}
& \Sigma^\infty S^0 \ar[ld] \ar[rd]& \\
\Sigma^\infty\mathbb{C}P^\infty_+[b^{-1}] \ar[rr, "\simeq" below, "\theta" above] & & G
\end{tikzcd}
\end{equation}
and
\begin{equation} \label{p5}
\begin{tikzcd}
\Sigma^\infty\mathbb{C}P^\infty_+[b^{-1}] \wedge
\Sigma^\infty\mathbb{C}P^\infty_+[b^{-1}]
\ar[rr, "\theta \wedge\theta" above, "\simeq" below]
\ar[d, "\mu"]
&&
G\wedge G
\ar[d]
\\
\Sigma^\infty\mathbb{C}P^\infty_+
\ar[rr, "\theta" above, "\simeq" below]
&&
G	
\end{tikzcd}
\end{equation}
commute in $\mathrm{Ho}\mathcal{M}_S$.

We will give the proof for diagrams \eqref{p2} and \eqref{p3}; the proof for
the other two diagrams is the same.

\begin{lemma}
\label{o3}
The diagrams
\begin{equation} \label{o4}
\begin{tikzcd}
& \Sigma^\infty S^0 \ar[ld] \ar[rd]& \\
\Sigma^\infty\mathbb{C}P^\infty_+ \ar[rr, "\bar{x}" above] & & KU
\end{tikzcd}
\end{equation}
and
\begin{equation} \label{o5}
\begin{tikzcd}
\Sigma^\infty\mathbb{C}P^\infty_+ \wedge \Sigma^\infty\mathbb{C}P^\infty_+
\ar[rr, "\bar{x}\wedge \bar{x}" above]
\ar[d]
&&
KU\wedge KU
\ar[d]
\\
\Sigma^\infty\mathbb{C}P^\infty_+
\ar[rr, "\bar{x}" above]
&&
KU
\end{tikzcd}
\end{equation}
commute in $\mathrm{Ho}\mathcal{M}_S$.
\end{lemma}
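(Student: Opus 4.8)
The plan is to reduce both diagrams to equalities of classes in ordinary $K$-theory. In $\mathrm{Ho}\mathcal{M}_S$ a map $\Sigma^\infty \mathbb{C}P^\infty_+ \to KU$ is the same as a class in $K^0(\mathbb{C}P^\infty)$, and a map out of $\Sigma^\infty \mathbb{C}P^\infty_+ \wedge \Sigma^\infty \mathbb{C}P^\infty_+ \cong \Sigma^\infty(\mathbb{C}P^\infty \times \mathbb{C}P^\infty)_+$ is the same as a class in $K^0(\mathbb{C}P^\infty \times \mathbb{C}P^\infty)$; exactly as in the construction of $\eta$, there is no $\varprojlim^1$ obstruction, since by \cite[Proposition III.8.1 and Exercise (ii) on page 222]{Adams_Stable} such a class is detected on the finite skeleta $\mathbb{C}P^n$ (respectively $\mathbb{C}P^n \times \mathbb{C}P^m$). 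Hence it suffices to compare the two composites in each of \eqref{o4} and \eqref{o5} as elements of $K$-theory.

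For the unit triangle \eqref{o4}, the leg $\Sigma^\infty S^0 \to \Sigma^\infty \mathbb{C}P^\infty_+$ is the inclusion of the unit of the $H$-space structure on $\mathbb{C}P^\infty = BS^1$, while $\Sigma^\infty S^0 \to KU$ is the unit of the ring spectrum $KU$, representing $1 \in K^0(\mathrm{pt})$. Composing the first leg with $\bar{x}$ represents the restriction of $x = [\xi]$ to the basepoint, and since the canonical line bundle $\xi$ is trivial of rank one over a point this restriction equals $1$. Thus both legs represent $1 \in K^0(\mathrm{pt}) = \pi_0 KU$, and \eqref{o4} commutes.

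For the product square \eqref{o5}, I would evaluate both composites in $K^0(\mathbb{C}P^\infty \times \mathbb{C}P^\infty)$. The down-then-right composite is $\bar{x}$ precomposed with the map induced by the $H$-space multiplication $m \colon \mathbb{C}P^\infty \times \mathbb{C}P^\infty \to \mathbb{C}P^\infty$, hence represents $m^* x$. The right-then-down composite, being $\bar{x} \wedge \bar{x}$ followed by the ring multiplication $KU \wedge KU \to KU$, represents the external product $x \times x = \pi_1^* x \cdot \pi_2^* x$, since that ring multiplication induces precisely the product on $K^0$. These two classes agree because the canonical line bundle is multiplicative: the map $m$ classifies $\pi_1^*\xi \otimes \pi_2^*\xi$, so $m^*\xi \cong \pi_1^*\xi \otimes \pi_2^*\xi$ and therefore $m^*[\xi] = [\pi_1^*\xi]\cdot[\pi_2^*\xi] = x \times x$ in $K^0$. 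Hence \eqref{o5} commutes, and the proofs for \eqref{p4} and \eqref{p5} are word-for-word the same with $G$, $\bar{y}$, $\theta$ in place of $KU$, $\bar{x}$, $\eta$, using only that $G$ represents $K$-theory multiplicatively.

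The one point requiring care is the passage from morphisms in the homotopy category of $S$-modules to computations in ordinary $K$-theory; this rests on representability of $K$-theory by $KU$ (and $G$) together with the vanishing of the relevant $\varprojlim^1$ term for the towers $\{\mathbb{C}P^n\}$ and $\{\mathbb{C}P^n \times \mathbb{C}P^m\}$, which holds because $K^1(\mathbb{C}P^n) = 0$. Everything else is the classical bundle-theoretic fact that the tensor product of complex line bundles is classified by the multiplication on $BS^1 = \mathbb{C}P^\infty$, so I expect no genuine obstacle beyond this bookkeeping.
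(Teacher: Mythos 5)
Your proof is correct and follows essentially the same route as the paper's: both reduce the diagrams to identities of classes in $K^0$, verify the unit triangle by restricting $\xi$ to a point, and verify the product square using that the $H$-space multiplication on $\mathbb{C}P^\infty$ classifies the tensor product of line bundles, with the Adams skeletal-detection result (Prop.~III.8.1 and Exercise~(ii)) handling the passage to the infinite complex. The only cosmetic difference is that the paper carries out the bundle comparison entirely on finite complexes, factoring $\mathbb{C}P^n \times \mathbb{C}P^n \to \mathbb{C}P^\infty$ through $\mathbb{C}P^{2n}$ by cellular approximation, whereas you assert $m^*\xi \cong \pi_1^*\xi \otimes \pi_2^*\xi$ over the infinite product and then invoke the same skeletal reduction to transfer it to representable $K$-theory.
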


\begin{proof}
Diagram \eqref{o4} commutes because the
restriction of $x$ to $\mathbb{C}P^0$ is the standard
generator of $K(\mathbb{C}P^0)$. For diagram \eqref{o5} we need to show that the pullback of $x$ along the map
\[
\Sigma^\infty\mathbb{C}P^\infty_+ \wedge \Sigma^\infty\mathbb{C}P^\infty_+
\to
\Sigma^\infty\mathbb{C}P^\infty_+
\]
is the exterior product $x\times x$. For this it suffices, by \cite[Proposition III.8.1 and Exercise (ii) on page
222]{Adams_Stable}, to show that the pullback of $x$ along the map
\[
\Sigma^\infty\mathbb{C}P^n_+ \wedge \Sigma^\infty\mathbb{C}P^n_+
\to
\Sigma^\infty\mathbb{C}P^\infty_+
\]
is $x_n\times x_n$ for each $n$.
The map
\[
\mathbb{C}P^n \times \mathbb{C}P^n \to
\mathbb{C}P^\infty \times \mathbb{C}P^\infty
\to \mathbb{C}P^\infty
\]
factors up to homotopy through ${\C}P^{2n}$ by cellular approximation, so
we have a homotopy commutative diagram
\begin{equation} \label{o6}
\begin{tikzcd}
\mathbb{C}P^n \times \mathbb{C}P^n \ar[d] \ar[r]&
\mathbb{C}P^\infty \times \mathbb{C}P^\infty
\ar[d]
\\
\mathbb{C}P^{2n}\ar[r] &  \mathbb{C}P^\infty
\end{tikzcd}
\end{equation}
In this diagram, the pullback of the bundle $\xi$ along the clockwise composite
is isomorphic to the pullback of the bundle $\xi$ along the counterclockwise
composite, so the pullback of $\xi_{2n}$ along the left vertical arrow is
$\xi_n\otimes \xi_n$.
Applying $K$-theory to diagram \eqref{o6} gives the commutative diagram
\[
\begin{tikzcd}
K^0(\mathbb{C}P^n \times \mathbb{C}P^n) &
K^0(\mathbb{C}P^\infty \times \mathbb{C}P^\infty) \ar[l]
\\
K^0(\mathbb{C}P^{2n})\ar[u] &  K^0(\mathbb{C}P^\infty)\ar[u]\ar[l]
\end{tikzcd}
\]
We have just shown that the image of $x_{2n}$ under that left vertical arrow is
$x_n\times x_n$, so the image of $x$ under the counterclockwise composite is
$x_n\times x_n$ as required.
\end{proof}

Now the commutativity of diagram \eqref{o4} implies that of \eqref{p2}, so it
only remains to show commutativity of diagram \eqref{p3}.  By
\cite[Proposition III.8.1 and Exercise (ii) on page
222]{Adams_Stable} it suffices to show that the two composites in diagram
\eqref{p3} have homotopic restrictions to
$(\Sigma^{-2n} \Sigma^\infty
\mathbb{C}P^\infty_+) \wedge (\Sigma^{-2n} \Sigma^\infty
\mathbb{C}P^\infty_+)$
for each $n$, and this follows from Lemma \ref{o3}.


\textbf{Acknowledgements}
 We would like to thank Mike Mandell for pointing out that the isomorphism \eqref{eqn:basic} follows from the Adams-Priddy theorem. The third author would like to thank John Lind for helpful discussions about the two incarnations of unit spectra used in this paper.

{\small

}


\end{document}